\def\gg{\mathfrak{g}}
\def\hh{\mathfrak{h}}
\def\nn{\mathfrak{n}}
\def\ss{\mathfrak{s}}
\def\osc{\mathfrak{osc}}
\newcommand{\der}{{\mathfrak{der}}}
\newcommand{\p}{\mathfrak{p}}
\renewcommand{\t}{\mathfrak{t}}
\newcommand{\C}{\ensuremath{\mathbb{C}}}
\newcommand{\K}{\ensuremath{\mathbb{K}}}
\newcommand{\rr}{\ensuremath{\mathbb{R}}}
\renewcommand{\S}{\ensuremath{\mathbb{S}}}
\newcommand{\tr}{\mathrm{tr}}
\newcommand{\bmat}{\begin{pmatrix}}
\newcommand{\emat}{\end{pmatrix}}
\newcommand{\1}{\mathbf{1}}
\newcommand{\e}{\mathrm{e}}
\renewcommand{\d}{{\mathrm d}}
\newcommand{\bcase}{\begin{case}}
\newcommand{\ecase}{\end{case}}
\newcommand{\bclaim}{\begin{claim}}
\newcommand{\eclaim}{\end{claim}}
\newcommand{\bstep}{\begin{step}}
\newcommand{\estep}{\end{step}}
\newcommand{\bhlem}{\begin{hlem}}
\newcommand{\ehlem}{\end{hlem}}
	\newcommand{\pr}{\operatorname{pr}}
\newcommand{\bleer}{\begin{leer}}
\newcommand{\eleer}{\end{leer}}
\newcommand{\bde}{\begin{de}}
\newcommand{\ede}{\end{de}}
\newcommand{\bs}{\begin{satz}}
\newcommand{\es}{\end{satz}}
\newcommand{\btheo}{\begin{theo}}
\newcommand{\etheo}{\end{theo}}
\newcommand{\bfolg}{\begin{folg}}
\newcommand{\efolg}{\end{folg}}
\newcommand{\blem}{\begin{lem}}
\newcommand{\elem}{\end{lem}}
\newcommand{\bnote}{\begin{note}}
\newcommand{\enote}{\end{note}}
\newcommand{\bprf}{\begin{proof}}
\newcommand{\eprf}{\end{proof}}
\newcommand{\bd}{\begin{displaymath}}
\newcommand{\ed}{\end{displaymath}}
\newcommand{\be}{\begin{eqnarray*}}
\newcommand{\ee}{\end{eqnarray*}}
\newcommand{\eeqa}{\end{eqnarray}}
\newcommand{\beqa}{\begin{eqnarray}}
\newcommand{\bi}{\begin{itemize}}
\newcommand{\ei}{\end{itemize}}
\newcommand{\bnum}{\begin{enumerate}}
\newcommand{\enum}{\end{enumerate}}
\renewcommand{\la}{\langle}
\renewcommand{\ra}{\rangle}
\newcommand{\beq}{\begin{equation}}
\newcommand{\eeq}{\end{equation}}
\newcommand{\ccc}{\mathbb{C}}
\newcommand{\vf}{\varphi}
\newcommand{\earr}{\end{array}\]}
\newcommand{\barr}{\[\begin{array}}
\newcommand{\bvec}{\left(\begin{array}{c}}
\newcommand{\evec}{\end{array}\right)}
\newcommand{\g}{\mathrm{g}}
\newcommand{\hg}{\hat{\g}}
\newcommand{\Ric}{\mathrm{Ric}}
\newcommand{\R}{\mathrm{R}}
\newcommand{\A}{\mathrm{A}}
\newcommand{\B}{\mathrm{B}}
\newcommand{\W}{\mathrm{C}}
\newcommand{\h}{\mathrm{h}}
\newcommand{\n}{\mathfrak{n}}
\renewcommand{\k}{\mathfrak{k}}
\renewcommand{\sl}{\mathfrak{sl}}
\newcommand{\+}{\oplus}
\newcommand{\so}{\mathfrak{so}}
\renewcommand{\b}{\mathrm{b}}
\newcommand{\s}{\sigma}
\newcommand{\bbem}{\begin{bem}}
\newcommand{\ebem}{\end{bem}}
\newcommand{\bbez}{\begin{bez}}
\newcommand{\ebez}{\end{bez}}
\newcommand{\bbsp}{\begin{bsp}}
\newcommand{\ebsp}{\end{bsp}}
\newcommand{\tg}{\widetilde{\g}}
\newcommand{\ro}{\mathsf{P}}
\newcommand{\rot}{\mathsf{J}}
\newcommand{\ann}{\mathfrak{ann}}
\newcommand{\Ad}{\mathrm{Ad}}
\newcommand{\ad}{\mathrm{ad}}
\renewcommand{\div}{\mathrm{div}}
\newcommand{\belabel}[1]{\begin{equation}\label{#1}}
\theoremstyle{plain}
\newtheorem{theorem}{Theorem}[section]
\newtheorem{lemma}{Lemma}[section] 
\newtheorem*{lem*}{Lemma}
\newtheorem{proposition}{Proposition} [section]
\newtheorem{corollary}{Corollary}[section] 
\theoremstyle{definition}
\newtheorem{definition}{Definition}[section] 
\newtheorem{remark}{Remark}[section] 
\newtheorem{bez}{Notation}[section] 
\newtheorem{example}{Example}[section] 
\newtheorem*{bsp*}{Example}
\newtheorem*{def*}{Definition}
\newcommand{\hook}{\makebox[7pt]{\rule{6pt}{.3pt}\rule{.3pt}{5pt}}\,}
\renewcommand{\hat}{\widehat}
\numberwithin{equation}{section}
\begin{document}
\bibliographystyle{abbrv}

\title
[Conformal properties of  indefinite bi-invariant metrics]
{Conformal properties of indefinite  bi-invariant metrics}
\author[Kelli Francis-Staite]{Kelli Francis-Staite}
\address[Francis-Staite]{Mathematics Department\\ The University of Oxford\\ OX2 6GG\\United Kingdom}
\email{kelli.francis-staite@maths.ox.ac.uk}
\author[Thomas Leistner]{Thomas Leistner}
\address[Leistner, corresponding author]{School of Mathematical Sciences\\University of Adelaide\\SA5005\\Australia
}
\email{thomas.leistner@adelaide.edu.au}

%

\thanks{The
second author acknowledges support from the Australian Research
Council via the grant FT110100429. The first author was funded by a Master of Philosophy scholarship from the University of Adelaide.
}

  \begin{abstract}
  An indecomposable Lie group with Riemannian bi-invariant metric is always simple and hence Einstein. For indefinite metrics this is no longer true, not even for simple Lie groups.
  We study the question of whether a semi-Riemannian bi-invariant metric is conformal to an Einstein metric. We obtain results for all three  cases in  the structure theorem by Medina and Revoy for indecomposable metric Lie algebras: the case of simple Lie algebras, and the cases  of 
    double extensions of  metric Lie algebras by $\rr$ or a simple Lie algebra. 
     Simple Lie algebras  are conformally Einstein precisely when they are Einstein, or when equal to $\sl_2\C$ and  conformally flat. Double extensions of metric Lie algebras by simple Lie algebras of rank greater than one are never conformally Einstein, and neither are double extensions of Lorentzian oscillator algebras, whereas the oscillator algebras themselves are conformally Einstein. Our results give a complete answer to the question of which metric Lie algebras in Lorentzian signature and in signature $ (2,n-2)$ are  conformally Einstein.
   %
%
  \end{abstract}

 \subjclass[2010]{Primary: 53C50, 53C35, 53A30; Secondary: 22E60}
\keywords{Bi-invariant metrics, conformal Einstein metrics, double extensions of Lie algebras}

\maketitle
  
   \tableofcontents
  \section{Introduction and statement of results}
  In this paper, we study the conformal properties of bi-invariant metrics on Lie groups by considering properties of their Lie algebras. Recall that a semi-Riemannian metric on a Lie group $G$ is called {\em bi-invariant} if all  multiplications from left  and right are isometries.  
  A Lie group with a bi-invariant semi-Riemannian metric  is called a {\em metric Lie group}. A~bi-invariant metric of signature $(p,q)$ on $G$ induces a scalar product of signature $(p,q)$ on the Lie algebra $\gg$ of $G$ that is  $\Ad$-invariant, i.e., invariant under the adjoint representation $\Ad$ of $G$ on $\gg$, and consequently invariant under its differential  $\ad$. In fact, on a connected Lie group, bi-invariant metrics are in 1-1 correspondence with  $\ad$-invariant scalar products on the Lie algebra. A Lie algebra $\gg$ with a scalar product (of signature $(p,q)$) is called a {\em metric Lie algebra (of signature $(p,q)$)}.
 
  In the following, when studying the geometry of a metric Lie group $G$ we will do this by studying metric Lie algebras, and when referring to geometric objects on $G$, such as  the curvature tensor, the Ricci tensor, etc., we will just refer to curvature tensor, Ricci tensor, etc.~of the metric Lie algebra $\gg$.
  
A metric Lie algebra is called {\em decomposable} if the Lie algebra is isomorphic to a direct sum of orthogonal ideals. If there is no such decomposition, we call the metric Lie algebra and its corresponding Lie group {\em indecomposable}. Due to splitting theorems of de Rham and Wu decomposability in this algebraic sense   is related to decomposability in the geometric sense, i.e., to the fact that the metric Lie group, if it is simply connected,  decomposes into a semi-Riemannian product manifold. Hence, indecomposable metric Lie algebras can be considered as the fundamental building blocks of metric Lie algebras. More surprising is the following striking structure result for indecomposable metric Lie algebras:
    \begin{theorem}[Medina \& Revoy \cite{medina-revoy85}]
  \label{mrtheo}
Every  indecomposable metric Lie algebra is either one-dimensional, simple, or a double extension of a metric Lie algebra $\hh\not=\{0\}$ by another  Lie algebra $\ss$ and a Lie algebra homomorphism $\delta:\ss\to \so(\hh)\cap\der(\hh)$ into to the skew derivations of $\hh$ such that:
    \begin{enumerate}[(a)]
    \item $\ss$ is simple or $\ss=\rr$, and 
        \item the image of $\delta$ is not contained in the inner derivations $\ad(\hh)$, \cite{FavreSantharoubane87, Figueroa-OFarriStanciu96}.
    \end{enumerate}
      For a double extension, the signature of the metric is $(p+\dim \ss, q+\dim\ss)$, where $(p,q)$ is the signature of the metric Lie algebra $\hh$. 
      \end{theorem}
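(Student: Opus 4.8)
The plan is to obtain the double-extension structure from a single well-chosen minimal ideal, using throughout two elementary consequences of $\ad$-invariance. First, for any ideal $\laa$ the orthogonal complement $\laa^\perp$ is again an ideal, since $\langle[x,\laa^\perp],\laa\rangle=-\langle\laa^\perp,[x,\laa]\rangle=0$; second, if $\laa$ is nondegenerate then $\gg=\laa\oplus\laa^\perp$ is an orthogonal direct sum of ideals, so an indecomposable $\gg$ has no proper nondegenerate ideal. If $\gg$ has no proper nonzero ideal at all it is one-dimensional or simple and we are done, so I would assume it is neither and fix a minimal nonzero ideal $\lai$. Then $\lai\cap\lai^\perp$ is an ideal contained in $\lai$, so by minimality it is $0$ or $\lai$; the former would make $\lai$ a proper nondegenerate ideal, which is excluded, so $\lai\cap\lai^\perp=\lai$, i.e. $\lai$ is totally isotropic.

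Next I would pin down the type of $\lai$ and of the candidate quotient. A minimal ideal is simple or abelian (because $[\lai,\lai]$ is an ideal contained in $\lai$). I would rule out the simple case directly from isotropy: if $\lai$ were simple, then $\der(\lai)=\ad(\lai)$ gives a vector-space splitting $\gg=\lai\oplus\z_\gg(\lai)$, and $\lai=[\lai,\lai]$ forces $\lai\perp\z_\gg(\lai)$; combined with the isotropy $\langle\lai,\lai\rangle=0$ this yields $\lai\perp\gg$, hence $\lai=0$, a contradiction. So $\lai$ is abelian. Set $\hh:=\lai^\perp/\lai$ and $\ss:=\gg/\lai^\perp$. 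The metric descends to $\hh$, since the radical of its restriction to $\lai^\perp$ is $\lai^\perp\cap\lai=\lai$, making $\hh$ a metric Lie algebra; and a short computation ($\langle[x,v],u\rangle=\langle x,[v,u]\rangle=0$ for $x\in\lai^\perp$, $v\in\lai$, using $[v,u]\in\lai$) gives $\z_\gg(\lai)=\lai^\perp$. The crucial point, condition (a), is that $\ss$ is simple or $\ss=\rr$. Here I would exploit the metric: the pairing $\lai\times(\gg/\lai^\perp)\to\rr$, $(v,x+\lai^\perp)\mapsto\langle v,x\rangle$, is nondegenerate and $\gg$-invariant, so $\ss\isom\lai^*$ as $\ss$-modules. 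Since $\lai$ is a minimal ideal it is an irreducible $\ss$-module, hence so is its dual, and under $\ss\isom\lai^*$ this irreducible module is precisely the adjoint representation of $\ss$. But a Lie algebra whose adjoint representation is irreducible has no proper nonzero ideals, as these are exactly its adjoint submodules; therefore $\ss$ is simple or one-dimensional, which is (a).

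Finally I would reconstruct the double extension and verify (b). Refining the filtration $\lai\subset\lai^\perp\subset\gg$ by vector-space splittings $\gg=\ss\oplus\hh\oplus\lai$ with $\lai\isom\ss^*$, the adjoint action yields a homomorphism $\delta:\ss\to\so(\hh)\cap\der(\hh)$, which is skew because the metric on $\hh$ is $\ad$-invariant and a derivation because the bracket descends, together with a cocycle valued in $\lai\isom\ss^*$; matching these data against the defining brackets identifies $\gg$ with the double extension of $\hh$ by $(\ss,\delta)$. For (b) I would argue contrapositively: if $\delta(\ss)\subseteq\ad(\hh)$, the twist can be absorbed by modifying the splitting, producing a proper nondegenerate ideal and hence a decomposition of $\gg$; so indecomposability forces $\delta(\ss)\not\subseteq\ad(\hh)$. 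The signature statement $(p+\dim\ss,q+\dim\ss)$ is then immediate, since $\lai$ together with its complementary copy of $\ss$ contributes $\dim\ss$ mutually orthogonal hyperbolic planes orthogonal to $\hh$.

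I expect the main obstacle to be step (a): recognizing that the metric forces the self-duality $\ss\isom\lai^*$, so that it is the \emph{adjoint} representation of $\ss$ that is irreducible. This is exactly what collapses the a priori merely reductive $\ss$ down to simple or $\rr$. The reconstruction of the cocycle and the precise equivalence between (b) and indecomposability are more technical, but routine once the splitting of the filtration is fixed.
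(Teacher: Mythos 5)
The paper does not prove this statement: it is quoted from Medina--Revoy \cite{medina-revoy85} (with refinement (b) taken from \cite{FavreSantharoubane87,Figueroa-OFarriStanciu96}), so there is no in-paper argument to compare against. Your reconstruction follows the route of the original proof, and its main steps are sound: a minimal ideal $\lai$ of an indecomposable, non-simple, non-one-dimensional $\gg$ must be totally isotropic (otherwise it is a proper nondegenerate ideal); the identity $\langle[x,v],u\rangle=\langle x,[v,u]\rangle=0$ gives $[\lai^\perp,\lai]=0$, so in particular $\lai$ is abelian (your separate exclusion of the simple case is correct but redundant); and the nondegenerate invariant pairing $\lai\times\gg/\lai^\perp\to\rr$ identifies the adjoint representation of $\ss=\gg/\lai^\perp$ with the dual of the irreducible $\ss$-module $\lai$, forcing $\ss$ to be simple or one-dimensional. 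That last observation is indeed the crux, and you have it right.

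Three caveats. First, the claimed equality $\z_\gg(\lai)=\lai^\perp$ is an overstatement: your computation only yields $\lai^\perp\subseteq\z_\gg(\lai)$, and equality fails for instance for the oscillator algebras, where $\lai=\ss^*$ is central; fortunately only this inclusion is used. Second, the reconstruction tacitly requires a lift of $\ss$ into $\gg$ that is a \emph{subalgebra}; this is automatic for $\ss=\rr$, but for simple $\ss$ it needs the Levi--Malcev theorem applied to the extension $0\to\lai^\perp\to\gg\to\ss\to0$ --- without a subalgebra lift, $\delta$ is only defined modulo inner derivations, so this deserves a sentence. Third, your argument does not (and cannot) establish the clause $\hh\not=\{0\}$ as stated: when $\lai$ is maximally isotropic one gets $\lai^\perp=\lai$ and $\hh=\{0\}$, which actually occurs for the indecomposable metric Lie algebra $\ss\ltimes\ss^*$ (the cotangent algebra of a simple $\ss$ with the dual-pairing metric); this is an imprecision in the quoted statement rather than a defect of your proof. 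Finally, the equivalence in (b) between indecomposability and $\delta(\ss)\not\subseteq\ad(\hh)$ is asserted rather than proved, but the paper itself only cites it, so this is acceptable.
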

      
     It is  known that indecomposable {\em Riemannian} metric Lie algebras are simple or one-dimensional; the signature description from Theorem \ref{mrtheo} confirms this. The interesting part of \Cref{mrtheo} however is the statement about indefinite metric Lie algebras and these will  be a focus of our paper.
   
 In the following we will study     conformal properties of metric Lie groups, such as (local) conformal flatness or being (locally) conformally equivalent to an Einstein space (for precise definitions see the following paragraph and \Cref{cesec}).  As we will formulate our results in terms of the associated metric Lie algebra, we leave aside the difficulties arising from the transition from local to global and the fact that there may be  several (locally isometric) metric Lie groups having the same metric Lie algebra. We will say that a metric Lie algebra $\gg$ has a conformal property if a Lie group with bi-invariant metric and metric Lie algebra $\gg$  has the corresponding local conformal property. 
For example, given the equivalence of local conformal flatness with the vanishing of the Weyl tensor, we say that a metric Lie algebra $\gg$ is {\em conformally flat} if its Weyl tensor vanishes. 
 
Our main focus is the property of a bi-invariant metric on a  Lie group  to be locally conformally equivalent to an Einstein metric (again see \Cref{cesec} for details). Of course,  the resulting metric is no longer bi-invariant (unless the scaling function is constant). In contrast to the locally conformally flat property, there is in general no tensorial condition that is equivalent  to the locally conformal Einstein property.
Instead, it is equivalent   to  the following  differential equation: a metric $\g$ on a manifold $M$ is locally conformally equivalent to an Einstein metric if and only if
\begin{enumerate}
\item[(A)] each point in $M$ has a neighbourhood $U$ with a closed $1$-form $\Upsilon\in \Gamma(T^*U)$ such that that the trace-free part of \[
\ro -\nabla \Upsilon +\Upsilon^2 \]
vanishes, where $\ro$ is the Schouten tensor and $\nabla$ the Levi-Civita connection of $\g$.
\end{enumerate}
For Lie groups with bi-invariant metric this property 
cannot be  be formulated purely in terms of the metric Lie algebra.  Instead we make the following definition:
\begin{definition}\label{cedef}
A metric Lie algebra $\gg$ is  {\em conformally Einstein}  if for the unique simply connected metric Lie group $(G,\g)$ with metric Lie algebra $\gg$  the  bi-invariant metric $\g$ is locally conformally equivalent to an Einstein metric\footnote{Of course, as we only require the local property in the definition, if the simply connected metric Lie group is locally conformally Einstein, then all other metric Lie groups with the same metric Lie algebra also satisfy this property.}.
\end{definition}

Even though in general  the locally conformally Einstein property is not fully characterised by a tensorial condition,  there are  certain {\em tensorial obstructions} for the original metric to be conformally Einstein. Remarkably, under some genericity conditions on the Weyl tensor, the vanishing of these obstructions  is  not only necessary but  also sufficient for the metric to be conformally Einstein. In general dimensions these obstructions were found by Gover and Nurowski \cite{gover-nurowski04}. They allow us to check effectively whether a given metric can be conformally Einstein by computing certain tensors, instead of attempting to solve the PDE in (A) directly. 
We will see that in the case of a metric Lie algebra $\gg$, the vanishing of these obstructions simplifies  to the following two conditions:
\begin{enumerate}
\item[(B)] The Bach tensor $\B$ of $\gg$ vanishes, and
\item[(C)] 
 if $n=\dim(\gg)>4$ and $\gg$ is not already Einstein, the {\em Weyl nullity ideal} $\n$ in $\gg$ is not zero. Here $\n$ is the ideal
\[\n=\{X\in \gg\mid X\hook \W=0\}\subset \gg, \]
where $\W$ is the Weyl tensor of  $\gg$.
\end{enumerate}

Note that the question whether an indecomposable {\em Riemannian}, and hence simple,  metric Lie algebra is conformally Einstein is trivial as it is already Einstein: the only candidate for an  $\ad$-invariant positive definite bilinear form is the Killing form,
and therefore the simple Lie algebra $\gg$ is of compact type and Einstein with positive Einstein constant  (see for example Milnor's classical paper \cite{milnor76}).

With the aim of determining which indefinite metric Lie algebras satisfy the necessary conditions (B) and (C),  we start by describing the Bach tensor of a metric Lie algebra and expressing its vanishing in terms of the Ricci tensor. In \Cref{liesec} we prove our first result:
\begin{theorem}\label{bachtheo}
%
%
A metric Lie algebra  of dimension $n>2$  is Bach flat if and only if it is Einstein, or its Ricci tensor $\Ric$ satisfies one of the following conditions:
\begin{enumerate}
\item $\Ric$  is  $2$-step nilpotent.
\item $\Ric$ 
is diagonalisable with two different eigenvalues $\lambda\not=0$ and $\mu =-\tfrac{n-k-1}{k-1}\lambda$, where $k$ is the dimension of the eigenspace of $\lambda$, and with non-degenerate eigenspaces. In this case, if $\Ric$ has a non trivial kernel, it is of dimension $1$ and non degenerate.
\end{enumerate}
\end{theorem}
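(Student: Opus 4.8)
The starting point I would use is that a bi-invariant metric turns $G$ into a symmetric space: with $\nabla_XY=\tfrac12[X,Y]$ one has $R(X,Y)Z=-\tfrac14[[X,Y],Z]$, and a short Jacobi-identity computation gives $\nabla R=0$, hence $\nabla\W=0$. Consequently the derivative term of the Bach tensor vanishes and $\B$ reduces to a purely algebraic contraction $\B=c\,\Ric^{cd}\W_{acbd}$ for a nonzero constant $c=c(n)$ (the Weyl tensor contracted with Ricci). The second structural input is that the Ricci endomorphism $\rho$, defined by $\langle\rho X,Y\rangle=\Ric(X,Y)$, equals $-\tfrac14$ times the Casimir operator $\sum_i\varepsilon_i\ad_{e_i}^2$ for an orthonormal basis $\{e_i\}$ with $\langle e_i,e_i\rangle=\varepsilon_i$; in particular $\rho$ is self-adjoint for $\langle\cdot,\cdot\rangle$ and commutes with every $\ad_X$.

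The core computation is to convert $\B=0$ into an equation for $\rho$ alone. Writing $R(X,Y)=-\tfrac14[\ad_X,\ad_Y]$ and using $\ad_{e_c}e_c=0$, $[\rho,\ad_X]=0$ and $\sum_c\varepsilon_c\ad_{e_c}^2=-4\rho$, the Ricci-contraction of the curvature collapses to $\Ric^{cd}R_{acbd}=-(\rho^2)_{ab}$. Substituting the decomposition of $R$ into its trace-free Weyl part and its Schouten part and collecting the trace terms, I expect $\B=0$ to be equivalent to the single operator identity
\[ \rho^2-\tfrac{s}{n-1}\,\rho-\tfrac1n\Big(\tr(\rho^2)-\tfrac{s^2}{n-1}\Big)\mathrm{id}=0,\qquad s:=\tr\rho. \]
The decisive consequence is that the minimal polynomial of $\rho$ has degree at most two, with coefficients built from $s$ and $\tr(\rho^2)$.

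It then remains to classify the self-adjoint $\rho$ solving this quadratic while matching the coefficients against the invariants $s,\tr(\rho^2)$ that $\rho$ itself produces. Degree one gives $\rho=\lambda\,\mathrm{id}$, i.e.\ Einstein. Two distinct real roots $\lambda\neq\mu$ make $\rho$ diagonalisable; self-adjointness forces the eigenspaces to be mutually orthogonal, hence non-degenerate, and subtracting the two scalar specialisations of the identity yields $\lambda+\mu=\tfrac{s}{n-1}$. With $s=k\lambda+(n-k)\mu$ this unwinds to $\mu=-\tfrac{n-k-1}{k-1}\lambda$, after which the remaining relation holds automatically; the kernel is non-trivial exactly when $\mu=0$, i.e.\ $k=n-1$, giving the one-dimensional non-degenerate $\mu$-eigenspace. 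This is case (2). A repeated real factor $(t-\lambda)^2$, with $\rho=\lambda\,\mathrm{id}+N$, $N^2=0\neq N$, has $\tr N=0$ so $s=n\lambda$, $\tr(\rho^2)=n\lambda^2$; the identity then reduces to $\tfrac{(n-2)\lambda}{n-1}N=0$, forcing $\lambda=0$ and hence $\rho^2=0$, which is case (1). Conversely any $\rho$ with $\rho^2=0$ has $s=\tr(\rho^2)=0$ and solves the identity trivially.

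The step I expect to be the main obstacle is the remaining possibility that the minimal polynomial is an \emph{irreducible} quadratic, i.e.\ that $\rho$ has non-real eigenvalues. The quadratic identity does not by itself exclude this: it forces the eigenvalues to be purely imaginary, $\rho^2=-\beta^2\,\mathrm{id}$, and such a self-adjoint neutral complex structure genuinely solves the Bach equation — it is realised by $\sl_2\C$ viewed as a real simple metric Lie algebra with the imaginary part of its Killing form, a conformally flat, non-Einstein example. Disposing of this case is thus not formal. I would argue, via the Segre/Jordan classification of self-adjoint operators on indefinite inner product spaces, that $\rho^2=-\beta^2\,\mathrm{id}$ forces neutral signature $(n/2,n/2)$, so that for $n>2$ it cannot occur in Lorentzian signature, nor in signature $(2,n-2)$ once $n>4$; this is precisely what makes the list Einstein/(1)/(2) exhaustive in the signatures of interest, while in the remaining signatures this family must be separated off as the exceptional conformally flat case. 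Carrying out this signature obstruction cleanly, together with verifying that the coefficient-matching in cases (1)–(2) is both necessary and sufficient, is where the real work lies.
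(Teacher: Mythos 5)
Your reduction of $\B=0$ to the quadratic identity $\rho^2-\tfrac{s}{n-1}\rho-\tfrac1n\bigl(\tr(\rho^2)-\tfrac{s^2}{n-1}\bigr)\mathrm{id}=0$ for the Ricci endomorphism is exactly the paper's route: the vanishing of the Cotton tensor (from $\nabla\ro=0$), the contraction identity $\Ric^{cd}\R_{acbd}=-(\rho^2)_{ab}$ (\Cref{ric2lem}, proved there via $\ad$-invariance of the Killing form rather than through the Casimir, but it is the same computation), the resulting formula for $\B$ (\Cref{bachprop}), and the minimal-polynomial case analysis with the coefficient matching in cases (1) and (2) all coincide with what is in the paper.

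The one point where you diverge is the non-real eigenvalue case, and your caution there is justified rather than excessive. The paper dismisses this case with the sentence that $(k-1)\mu=-(n-k-1)\lambda$ together with $\lambda+\mu\in\rr$ forces both eigenvalues to be real; that inference fails precisely when $k=n/2$, where the relation degenerates to $\mu=-\lambda$ and $\lambda+\mu=0$ carries no information. The purely imaginary pair $\pm i\beta$ with $\rho^2=-\beta^2\,\mathrm{id}$ therefore survives the quadratic identity, and, as you observe, it is genuinely realised --- by the imaginary part of the Killing form on any complex simple Lie algebra viewed as real (see \Cref{simpletheo}, where these metrics are shown to be Bach flat and non-Einstein), not only on $\sl_2\C$. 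Your proposed exit via a signature obstruction is correct as far as it goes ($\rho/\beta$ is a self-adjoint anti-isometric complex structure, forcing neutral signature), but it cannot close the argument for the theorem as stated, since the statement is for arbitrary signature and these neutral-signature examples exist. The only way to reconcile them with the stated trichotomy is to read case (2) over $\C$: the pair $\pm i\beta$ with $k=n/2$ does satisfy $\mu=-\tfrac{n-k-1}{k-1}\lambda=-\lambda$ with non-degenerate complex eigenspaces, so it is formally an instance of (2). If one insists on real diagonalisability in (2) --- which is what the paper's proof attempts, and fails, to establish --- then a fourth alternative $\Ric^2=-\beta^2\g$ must be added. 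Either way, you have correctly located the one genuinely non-formal step, and your signature remark explains why the issue is invisible in the Lorentzian and $(2,n-2)$ applications made of the theorem later in the paper.
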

By a $2$-step nilpotent Ricci tensor we mean that the endomorphism of $\gg$ that is obtained by dualising the Ricci tensor with the metric squares to zero, $\Ric^2=0$ (including the Ricci-flat case). 
As a corollary we obtain that solvable metric Lie algebras are Bach flat. 

Next, in \Cref{simplesec} we consider the case of {\em simple metric Lie algebras} $\gg$ in \Cref{mrtheo} by considering their complexifications $\gg^\C$. Extending the trivial Riemannian situation,  we show:
\begin{theorem}\label{simpletheoointro}
Let $\gg$ be a simple metric Lie algebra. If $\gg^\C$ is simple, the metric is given by the Killing form and hence Einstein. If $\gg^\C$ is not simple, then there is a $2$-parameter family of bi-invariant metrics of neutral signature $(\tfrac{n}{2},\tfrac{n}{2})$. Moreover:
\begin{enumerate}
\item The only Bach flat metrics in this family are the multiples of the Killing form  of $\g$ and of the imaginary part of the Killing form of $\g^\ccc$.
\item 
The only conformal Einstein metrics in this class are multiples of the Killing form of $\g$ (which are Einstein) and,  when $\gg=\sl_2\C$ (as real Lie algebra),  the multiples of the imaginary part of the Killing form of $\sl_2\C$ (which are conformally flat).
\end{enumerate}
\end{theorem}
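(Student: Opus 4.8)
The plan is to work throughout with the explicit curvature of a bi-invariant metric, for which $\nabla_XY=\tfrac12[X,Y]$, $\R(X,Y)Z=-\tfrac14[[X,Y],Z]$ and $\Ric=-\tfrac14 B_\gg$ as a $(0,2)$-tensor (independent of the chosen invariant scalar product), and to reduce the conformal questions to the linear algebra of $\ad$-invariant forms. First I would classify the invariant scalar products by complexifying. If $\gg^\C$ is simple, Schur's lemma makes the space of invariant symmetric bilinear forms one-dimensional, so $g$ is a multiple of $B_\gg$; since $\Ric=-\tfrac14 B_\gg$ is then proportional to $g$, the metric is Einstein. If $\gg^\C$ is not simple, $\gg$ is a complex simple Lie algebra regarded over $\rr$ and carries an $\ad$-invariant complex structure $J$; writing $B^\C$ for its complex Killing form, the real forms $P=\mathrm{Re}\,B^\C$ and $Q=\mathrm{Im}\,B^\C$ span the two-dimensional space of invariant scalar products, with $B_\gg=2P$. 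Computing in a $B^\C$-orthonormal complex basis together with its $J$-image shows that every $g=aP+bQ$ (with $(a,b)\neq(0,0)$) has neutral signature $(\tfrac n2,\tfrac n2)$.

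For part (1) I would examine the Ricci endomorphism $\Ric^\sharp=g^{-1}\Ric$ of $g=aP+bQ$. As $\Ric=-\tfrac12 P$, the operator $\Ric^\sharp$ commutes with $J$ and is the realification of a complex scalar operator, hence has a single complex-conjugate pair of eigenvalues $\lambda,\bar\lambda$, each of multiplicity $k=\tfrac n2$; in particular $(\Ric^\sharp)^2\neq0$ for non-degenerate $g$, so the $2$-step nilpotent alternative of \Cref{bachtheo} never occurs. The metric is Einstein exactly when the eigenvalues are real and equal, i.e.\ $b=0$; and the eigenvalue relation of case~(2) of \Cref{bachtheo}, which for $k=\tfrac n2$ reduces to $\mu=-\lambda$, holds exactly when $\lambda$ is purely imaginary, i.e.\ $a=0$. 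Since no further alternative is available, the Bach-flat metrics in the family are precisely those with $ab=0$: the multiples of $B_\gg$ (equivalently of $P$) and the multiples of $Q=\mathrm{Im}\,B^\C$, which is part (1).

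For part (2) the multiples of $B_\gg$ are Einstein, hence conformally Einstein, so only the imaginary metric $g=Q$ remains. Here a direct computation gives scalar curvature $0$ and $\Ric^\sharp=-\tfrac12 J$, so $\Ric^\sharp$ commutes with $J$; combined with $\R(JX,Y)=J\,\R(X,Y)$ (immediate from $J$ commuting with every $\ad$), this forces the whole Weyl tensor to be complex-linear with respect to $J$. Consequently the Weyl nullity ideal $\n=\{X:X\hook\W=0\}$ is $J$-invariant, hence a complex ideal of the simple complex Lie algebra $\gg$, so $\n=\{0\}$ or $\n=\gg$. By condition~(C) the imaginary metric is therefore conformally Einstein if and only if $\n=\gg$, that is, if and only if it is conformally flat, $\W=0$.

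The decisive and hardest step is thus to decide when $\W=0$ for $g=Q$. I would pass to the holomorphic geometry of $(\gg,J,B^\C)$: since $\Ric^\C=-\tfrac14 B^\C$ is holomorphically Einstein, its holomorphic Schouten tensor is a constant multiple of $B^\C$, so the holomorphic $(0,4)$-curvature splits as $\R^\C=\W^\C-\tfrac{1}{8(m-1)}\,K(B^\C,B^\C)$, where $m=\tfrac n2=\dim_\C\gg$, $\W^\C$ is the holomorphic Weyl tensor and $K$ denotes the Kulkarni--Nomizu product. On the real side $\W=\R^{(0,4)}+\tfrac{1}{2(n-2)}K(P,Q)$, because $g=Q$ and the scalar curvature vanishes, while $\R^{(0,4)}=\mathrm{Im}\,\R^\C$ and $\mathrm{Im}\,K(B^\C,B^\C)=2K(P,Q)$. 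The main obstacle is to verify that, after matching the real Schouten normalisation (division by $n-2=2m-2$) against the holomorphic one (division by $m-2$), the two Kulkarni--Nomizu contributions cancel exactly, yielding the key identity $\W=\mathrm{Im}\,\W^\C$. Granting this, $\W=0$ if and only if $\W^\C=0$, since $\W^\C$ is $J$-complex-multilinear and so its imaginary part vanishes on the real algebra only when $\W^\C=0$; and $\W^\C=0$ is equivalent to the vanishing of the Weyl tensor of a compact real form $(\u,B_\u)$, which, being Einstein, must then have constant curvature, forcing $\u=\su(2)$ and $\gg=\sl_2\C$. Hence the imaginary metric is conformally flat exactly for $\sl_2\C$ and, for every higher-rank $\gg$, has $\n=\{0\}$ and fails to be conformally Einstein, completing part (2).
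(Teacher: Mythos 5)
Your proposal is correct in substance but takes a genuinely different route at both key steps, and there is one place where you lean on a statement that does not literally apply. For part (1) the paper simply substitutes $\Ric=-\tfrac12 K_R$ and the block forms of $\g$ and $\g^{-1}$ into \Cref{bachprop} and reads off
$\B=\tfrac{m\lambda\mu}{4(\lambda^2+\mu^2)^2(2m-1)(m-1)}(\mu K_R-\lambda K_I)$, so Bach flatness is exactly $\lambda\mu=0$; you instead observe that $\Ric^\sharp$ is the realification of multiplication by a complex scalar and feed its spectrum into \Cref{bachtheo}. The caveat: when $a=0$ the eigenvalues of $\Ric^\sharp$ form a purely imaginary conjugate pair, so $\Ric$ is neither $2$-step nilpotent nor real-diagonalisable with non-degenerate eigenspaces --- it sits outside the literal trichotomy of \Cref{bachtheo}, whose proof discards complex eigenvalues and overlooks precisely the borderline case $k=n/2$. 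To close your argument you should go back to \Cref{bachprop} and \Cref{alglemma}: Bach flatness is equivalent to the quadratic relation $\Ric^2+b\Ric+c\,\g=0$ with $b=-\rho/(n-1)$, and for the realification of multiplication by $x+iy$ this relation reduces to $xy=0$, which recovers $\lambda\mu=0$ in both directions.

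For part (2) both arguments reduce, via the vanishing Cotton tensor, Bach flatness, $\ad$-invariance of $\W$ and the simplicity of the \emph{real} Lie algebra $\gg$, to deciding when $\W\equiv0$ for $\g=\mu K_I$ (your detour through $J$-invariance of $\n$ is unnecessary: $\n$ is already a real ideal). The paper then computes $4\W$ explicitly, arrives at the identity \cref{weyleq}, kills it for rank $\ge 2$ by evaluating on a Cartan subalgebra, and verifies it directly for $\sl_2\C$. You instead propose the identity $\W=\mathrm{Im}\,\W^\C$ and reduce to the constant-curvature characterisation of $\su(2)$. Your identity is true, and the normalisations match for the reason you suspect: both the real and the holomorphic metrics are Einstein, so each Schouten tensor equals $\tfrac{\rho}{2d(d-1)}\g$ in its own dimension $d$ and the factors $d-2$ cancel; the coefficient that survives is $\tfrac1{m-1}=\tfrac2{n-2}$ on both sides, and the imaginary part of the complex Kulkarni--Nomizu term $K_\hh(Y,Z)K_\hh(X,W)-K_\hh(X,Z)K_\hh(Y,W)$ is exactly the real Kulkarni--Nomizu combination of $K_R$ and $K_I$ appearing in $\W$ --- this is precisely the paper's displayed formula for $4\W$. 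So your route is a more conceptual packaging of the same computation: it buys the reduction of the final dichotomy to the classical fact that a compact simple Lie group has constant curvature only for $\su(2)$, whereas the paper's Cartan-subalgebra evaluation of \cref{weyleq} proves that fact from scratch in passing.
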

Motivated by the other cases in \Cref{mrtheo}  we turn to metric Lie algebras that are given by {\em double extensions} (for definition and details see \Cref{desec}).
We obtain the following consequence of  \Cref{bachtheo}.
\begin{corollary}\label{theodextric}
Let $\gg$ be a metric Lie algebra given by a double extension as in \Cref{dedef}. Then 
\begin{enumerate}
\item $\gg$ is Einstein if and only if it is Ricci flat, and 
\item
$\gg$ is Bach-flat if and only if  $\Ric^2=0$.
\end{enumerate}
\end{corollary}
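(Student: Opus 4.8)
The plan is to combine the classification in \Cref{bachtheo} with the special structure of the isotropic ideal that appears in any double extension. Recall first that for a bi-invariant metric the Levi-Civita connection is $\nabla_XY=\tfrac12[X,Y]$, so a short computation with the first Bianchi identity gives $R(X,Y)Z=-\tfrac14[[X,Y],Z]$ and hence $\Ric=-\tfrac14\,\kappa$, where $\kappa(X,Y)=\tr(\ad_X\ad_Y)$ is the Killing form of $\gg$; dually, the Ricci endomorphism is $-\tfrac14$ times the Killing endomorphism. Everything therefore reduces to understanding $\kappa$ on the double extension of \Cref{dedef}, whose underlying space I write as $\ss^*\oplus\hh\oplus\ss$, with $\ss^*$ the isotropic abelian ideal dual-paired with $\ss$.

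The key structural step, and the only one requiring real work, is to show that $\ss^*$ lies in the kernel of the Ricci endomorphism, equivalently that $\kappa(\ss^*,\gg)=0$. For $\xi\in\ss^*$ the derivation $\ad_\xi$ annihilates $\hh\oplus\ss^*$ and sends $\ss$ into $\ss^*$, so its image lies in $\ss^*$ and $\ss^*$ lies in its kernel. Since $\ss^*$ is an ideal, $\ad_Y$ preserves $\ss^*$ for every $Y\in\gg$; consequently $\ad_\xi\ad_Y$ has image inside $\ss^*$ and vanishes on $\ss^*$, so $(\ad_\xi\ad_Y)^2=0$. A nilpotent endomorphism has vanishing trace, whence $\kappa(\xi,Y)=0$ for all $Y$. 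As the ambient metric is non-degenerate, this yields $\Ric(\xi)=0$, that is, $\ss^*\subseteq\ker\Ric$.

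From this, both assertions follow quickly. For (1), if $\gg$ is Einstein then $\Ric=c\,\mathrm{Id}$ as an endomorphism; since $\Ric$ kills the non-zero subspace $\ss^*$, we must have $c=0$, so $\gg$ is Ricci-flat, and the converse is trivial. For (2), I would first note that a double extension has $\dim\gg=\dim\hh+2\dim\ss>2$, so \Cref{bachtheo} applies. If $\Ric^2=0$ then $\gg$ is Bach-flat by case~(1) of that theorem. Conversely, suppose $\gg$ is Bach-flat: the Einstein alternative reduces to Ricci-flatness by part~(1), hence to $\Ric^2=0$, while case~(1) of \Cref{bachtheo} is precisely $\Ric^2=0$, so it remains to exclude case~(2). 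There $\Ric$ is diagonalisable with non-degenerate eigenspaces and a kernel that, if non-trivial, is one-dimensional and non-degenerate. But $\ker\Ric$ contains the non-zero isotropic subspace $\ss^*$: when $\dim\ss\ge 2$ (the simple case) the kernel then has dimension at least $2$, and when $\dim\ss=1$ (the case $\ss=\rr$) the kernel contains a null line and is therefore degenerate. Either way case~(2) is impossible, and we conclude that Bach-flatness is equivalent to $\Ric^2=0$.

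The genuine obstacle is thus the identity $\ss^*\subseteq\ker\Ric$; once the nilpotency of $\ad_\xi\ad_Y$ is established the two equivalences are immediate from \Cref{bachtheo}. In writing this up I would take care to confirm that the computation is insensitive to any invariant symmetric form on $\ss$ that the construction in \Cref{dedef} might add to the metric, since the argument uses only the bracket relations defining the ideal $\ss^*$ together with the non-degeneracy of the ambient scalar product, not the precise form of the metric on $\ss$.
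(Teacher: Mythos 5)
Your proof is correct and follows essentially the same route as the paper: establish $\ss^*\subseteq\ker\Ric$, deduce (1) from the fact that $\ss^*$ is a nonzero null subspace, and deduce (2) by using \Cref{bachtheo} and ruling out the diagonalisable alternative because its kernel would have to be trivial, or else one-dimensional and non-degenerate, both incompatible with containing the null ideal $\ss^*$. The only (minor) difference is that the paper obtains $\ss^*\hook\Ric=0$ by reading off the Killing form from the matrix form of $\ad$ in \cref{dead}, whereas you derive it structurally from the nilpotency of $\ad_\xi\ad_Y$ for $\xi\in\ss^*$; both are valid.
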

As another consequence to \Cref{bachtheo} we obtain that double extensions $\gg$ of nilpotent Lie algebras $\hh$ have $2$-step nilpotent Ricci tensor. 

Next we turn to the case in \Cref{mrtheo} where $\gg$ is the {\em double extension of a metric Lie algebra by a simple Lie algebra}. Using the the necessary conditions (B) and (C)  we  show in \Cref{desimplesec}:
\begin{theorem}
\label{desimpletheo}
If the double extension $\gg$ of a metric Lie algebra $\hh$ by a simple Lie algebra $\ss$ is  conformally Einstein, then $\ss=\sl_2\rr$ or $\ss=\so(3)$.
\end{theorem}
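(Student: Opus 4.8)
The plan is to extract the desired restriction on $\ss$ from the two necessary conditions (B) and (C). For a bi-invariant metric the Ricci tensor equals $-\tfrac14$ times the Killing form of $\gg$, so the first step is to compute that Killing form blockwise on $\gg=\ss\oplus\hh\oplus\ss^*$. One finds that $\ss^*$ lies in the radical of $\Ric$, that $\Ric$ restricted to $\hh$ is the Ricci tensor of $\hh$, and that the Ricci endomorphism sends $\ss$ into $\ss^*$, maps $\hh$ to $\hh$, and annihilates $\ss^*$; in particular $\Ric^2=0$ is equivalent to $\Ric_\hh^2=0$. By \Cref{theodextric}, Einstein would force $\Ric=0$, hence the Killing form of $\gg$ to vanish identically, hence $\gg$ to be solvable by Cartan's criterion — impossible since $\gg$ contains the simple subalgebra $\ss$. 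Thus $\gg$ is never Einstein; since $\dim\gg=2\dim\ss+\dim\hh\ge 7>4$, condition (C) applies and the Weyl nullity ideal $\n$ is nonzero.

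The key structural fact is that $\ss^*$, carrying the coadjoint representation of the simple algebra $\ss$, is an irreducible and hence minimal ideal of $\gg$, so the ideal $\n\cap\ss^*$ is either $0$ or all of $\ss^*$. I would first treat $\n\cap\ss^*=0$: since the coadjoint representation of a simple Lie algebra is faithful, the inclusion $[\ss^*,\n]\subseteq\n\cap\ss^*=0$ forces the $\ss$-component of $\n$ to vanish, so $\n$ is the graph of an $\ss$-equivariant map $\phi\colon\pi_\hh(\n)\to\ss^*$ into the irreducible module $\ss^*$. Evaluating $X\hook\W=0$ on slots $(X,S,\eta,H')$ with $S\in\ss$, $\eta\in\ss^*$, $H'\in\hh$ shows that every $H\in\pi_\hh(\n)$ is a Ricci eigenvector of $\hh$ with eigenvalue proportional to the scalar curvature; Bach-flatness makes $\Ric_\hh$ nilpotent, so this eigenvalue vanishes and therefore $\operatorname{scal}=0$.

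The heart of the argument is the computation of the mixed Weyl components, using $\R(X,Y,Z,U)=\tfrac14\langle[X,Y],[Z,U]\rangle$. One gets $\W(\xi,S_1,S_2,\eta)$ proportional to $\operatorname{scal}\cdot\xi(S_2)\eta(S_1)$, so in the case $\ss^*\subseteq\n$ the vanishing of this term already forces $\operatorname{scal}=0$ (in the case $\n\cap\ss^*=0$ this was obtained above). Once $\operatorname{scal}=0$, the component $\W(\xi,S_1,S_2,S_3)$ becomes the difference between the Riemann tensor of $\ss$ with its Killing metric and the constant-curvature model tensor built from $\Ric|_\ss$; hence $\xi\hook\W=0$ for a fixed $\xi$ says precisely that $\ss$ has constant sectional curvature in the $\xi$-direction. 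If $\ss^*\subseteq\n$ this holds for all $\xi$, and if instead $\n\cap\ss^*=0$ with $\phi\ne0$ then $\operatorname{im}\phi=\ss^*$ by irreducibility, so again it holds for all $\xi$; either way $\ss$ with its Killing form has constant curvature. A simple Lie algebra with a multiple of its Killing form has constant curvature exactly when it is three-dimensional, so $\ss$ is $\sl_2\rr$ or $\so(3)$.

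It remains to exclude the residual case $\phi=0$, i.e. a nullity ideal $\n\subseteq\hh$. Here the nullity equations give $\delta(\ss)\n=0$ and $\Ric_\hh|_\n=0$, and evaluating $\W(H,S_1,S_2,H')=0$ forces the $\ss\times\ss$-block of $\Ric$ to vanish, i.e. $\operatorname{tr}_\hh(\delta(\cdot)\delta(\cdot))=-2B_\ss$ for $B_\ss$ the Killing form of $\ss$. The plan is then to contradict indecomposability: a nonzero ideal of $\hh$ annihilated by $\delta(\ss)$ and on which the defining cocycle of the double extension vanishes splits $\gg$ as an orthogonal direct sum. I expect this final step to be the main obstacle, because when the inner product induced on $\n$ is degenerate the splitting is not automatic, and one must instead pass to a suitable nondegenerate ideal built from $\n$, or rule out an isotropic $\delta(\ss)$-invariant ideal directly. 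The constant-curvature computation of the previous paragraph is routine once the Weyl tensor is written in bracket form; the genuinely delicate point is controlling the nullity ideal when it avoids $\ss^*$ and sits in a possibly degenerate position inside $\hh$.
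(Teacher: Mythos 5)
Your overall architecture matches the paper's: reduce to the obstructions (B) and (C), locate the Weyl nullity ideal $\n$ relative to $\ss^*$ and $\hh$, and extract from $\xi\hook\W=0$ on $\ss$-slots the bracket identity $[[X,Y],Z]=\tfrac{4}{n-2}\left(\Ric(X,Z)Y-\Ric(Y,Z)X\right)$, which kills all ranks $\ge 2$. (Your ``constant curvature iff three-dimensional'' reformulation is a legitimate substitute for the paper's Cartan-subalgebra computation in \Cref{snulllemma}, provided you also rule out $\Ric|_{\ss\times\ss}=0$ there, since that degenerate case gives $[[X,Y],Z]=0$ and must be excluded by simplicity of $\ss$ rather than by a curvature normalisation. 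Also, your side remark that $\Ric^2=0$ is \emph{equivalent} to $\Ric_\hh^2=0$ is not quite right --- by \cref{ric2} the mixed $\eta$-components must vanish too --- but nothing downstream depends on it.)

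The genuine gap is exactly the step you flag yourself: the residual case $\n\subseteq\hh$. You correctly derive there that $\delta(\ss)$ annihilates $\n$, and that $\W(H,S_1,S_2,H')=0$ forces $\Ric|_{\ss\times\ss}=0$, i.e.\ $\tr(\delta_S\circ\delta_T)=-2K_\ss(S,T)$. But then you turn to an indecomposability argument that, as you anticipate, does not close (the ideal you produce may be degenerate, and nothing prevents an isotropic $\delta(\ss)$-invariant ideal). The missing idea is that $\Ric|_{\ss\times\ss}=0$ is \emph{already a contradiction, for every double extension by a simple $\ss$}: this is the paper's \Cref{simplericprop}, whose proof rests on the Karpelevich--Mostow theorem (\Cref{kmtheo}, \Cref{killcor}). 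Since $\delta(\ss)\subset\so(\hh)$ is a semisimple subalgebra, Karpelevich--Mostow aligns a Cartan decomposition of $\delta(\ss)$ with one of $\so(t,s)$, so on a stable Cartan subalgebra both $K_\ss$ and the trace form $\tr(\delta_\cdot\circ\delta_\cdot)$ are negative on the compact part and positive on the noncompact part; hence $2K_\ss+\tr(\delta^2)$ cannot vanish there, and in particular cannot vanish identically. Without this representation-theoretic input the identity $\tr(\delta^2)=-2K_\ss$ is not obviously absurd (a priori the trace form of an indefinite $\so(t,s)$ could have the ``wrong'' sign on $\delta(\ss)$), so your proof is incomplete at precisely this point; with it, the case closes in one line and no indecomposability argument is needed. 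The same input also guarantees, at the start, that $\gg$ is never Einstein (your alternative via Cartan's solvability criterion is fine for that particular step) and that $\Ric|_\ss\ne 0$ in your main case.
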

While we will leave the cases of $\ss=\sl_2\rr$ or $\ss=\so(3)$ undecided in general, in \Cref{so3ex} we define a double extension of $\hh=\rr^3$ by $\ss=\so(3)$ that satisfies both conditions  (B) and (C), but does not satisfy condition (A) and hence is {\em not conformally Einstein}. 

The remaining case in \Cref{mrtheo} of {\em double extensions $\gg$ by the $1$-dimensional Lie algebra $\ss=\rr$} is the most difficult one. It is however important as every indecomposable Lorentzian metric Lie algebra is either  isomorphic to $\sl_2\rr$, or to a   Lorentzian oscillator algebra, 
\cite{medina85}.  
In general, the {\em oscillator algebra $\osc_{\Phi}(t,s)$ of signature 
 $(t+1,s+1)$} is defined as the double extension of the abelian metric Lie algebra $\rr^{t,s}$ by $\rr$ and  $\Phi\in\so(t,s)$. 
 The Lorentzian oscillator algebras of dimension $n$ then are denoted by  $\osc_{\Phi}(n-2)=\osc_{\Phi} (0,n-2)$. All oscillator algebras are 
  solvable and hence Bach flat. 
 In Section \ref{oscsubsec}  we show:
 \begin{theorem}\label{osctheo}
The oscillator Lie algebras $\mathfrak{osc}_\Phi(t,s)$ are  conformally Einstein. In particular,  all indecomposable Lorentzian metric Lie algebras and all indecomposable $4$-dimensional  metric Lie algebras are  conformally Einstein. 
\end{theorem}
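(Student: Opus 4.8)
The plan is to verify the defining condition~(A) directly: I will exhibit, on the simply connected oscillator group, a closed $1$-form $\Upsilon$ whose associated tensor $\ro-\nabla\Upsilon+\Upsilon^2$ has vanishing trace-free part. (Bach-flatness~(B) is already in hand, since oscillator algebras are solvable.) Realise $\gg=\osc_\Phi(t,s)$ as the double extension of the abelian $\hh=\rr^{t,s}$ by $\rr$, with basis $H,Z$ together with $\hh$, central $Z$, brackets $[H,X]=\Phi(X)$ and $[X,Y]=\langle\Phi X,Y\rangle\,Z$ for $X,Y\in\hh$, and metric $\langle H,Z\rangle=1$, $\langle H,H\rangle=\langle Z,Z\rangle=0$, $H,Z\perp\hh$. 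Using the bi-invariant formulas $\nabla_XY=\tfrac12[X,Y]$ and $\Ric=-\tfrac14 B$, a direct trace shows that the Killing form $B$ has $B(H,H)=\tr(\Phi^2)$ as its only nonzero component. Hence, with the null covector $\zeta:=\langle Z,\cdot\rangle$ (so $\zeta(H)=1$ and $\zeta$ vanishes on $\rr Z\oplus\hh$), I obtain
\[
\Ric=c\,\zeta\otimes\zeta,\qquad c=-\tfrac14\tr(\Phi^2).
\]
Since $\langle Z,Z\rangle=0$, the tensor $\zeta\otimes\zeta$ is trace-free; in particular the scalar curvature vanishes and the Schouten tensor is $\ro=\tfrac{c}{\,n-2\,}\,\zeta\otimes\zeta$. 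Thus \emph{all} relevant curvature data live on the single null line $\rr\,\zeta\otimes\zeta$.

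The second step exploits that $\zeta$ is closed: for left-invariant fields $d\zeta(X,Y)=-\langle Z,[X,Y]\rangle=0$, because every bracket lies in $\hh\oplus\rr Z\perp Z$. Locally therefore $\zeta=dx$ for a null ``phase'' function $x$, and I take the ansatz $\Upsilon=f(x)\,\zeta$, which is exact and hence closed for any $f$. A short computation from $\nabla_XY=\tfrac12[X,Y]$, the left-invariance of $\zeta$, and the identity $\zeta([X,Y])=0$ yields $(\nabla_X\Upsilon)(Y)=f'(x)\,\zeta(X)\zeta(Y)$, so that
\[
\ro-\nabla\Upsilon+\Upsilon^2=\Big(\tfrac{c}{n-2}-f'+f^2\Big)\,\zeta\otimes\zeta .
\]
As $\zeta\otimes\zeta$ is nonzero and trace-free, the trace-free part of the left-hand side vanishes exactly when the scalar coefficient does, i.e. when $f$ solves the Riccati equation $f'=f^2+\tfrac{c}{n-2}$. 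This ODE always admits local solutions (of $\tan$, $\tanh$ or $-1/x$ type according to the sign of $c$), so~(A) holds and $\osc_\Phi(t,s)$ is conformally Einstein. When $c=0$ the algebra is already Ricci-flat, hence Einstein by \Cref{theodextric}, and one may simply take $\Upsilon=0$.

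For the two ``in particular'' statements I would invoke the relevant classifications. By \cite{medina85} every indecomposable Lorentzian metric Lie algebra is either $\sl_2\rr$ or a Lorentzian oscillator algebra $\osc_\Phi(n-2)$; the former has simple complexification $\sl_2\C$, so by \Cref{simpletheoointro} its metric is a multiple of the Killing form and is Einstein, while the latter is covered above. In dimension $4$ there is no simple Lie algebra and a double extension by a simple $\ss$ has dimension at least $6$, so by \Cref{mrtheo} an indecomposable $4$-dimensional metric Lie algebra must be a double extension by $\rr$ of a $2$-dimensional metric Lie algebra; the latter is necessarily abelian, i.e. $\rr^{t,s}$ with $t+s=2$, whence $\gg=\osc_\Phi(t,s)$, again covered above. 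The main obstacle is that~(A) is a genuine PDE admitting no tensorial characterisation, so the real content is to produce an explicit solution; the decisive simplification—and the step I expect to require the most care—is the collapse of $\ro$, $\nabla\Upsilon$ and $\Upsilon^2$ onto the one-dimensional null line $\rr\,\zeta\otimes\zeta$, which reduces~(A) to the scalar Riccati equation but depends on correctly differentiating the \emph{non-invariant} form $f(x)\,\zeta$.
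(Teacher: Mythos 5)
Your proof is correct and takes essentially the same route as the paper: both rest on the parallel null central element $Z=\e_0$, the concentration of $\Ric$ and $\ro$ on the line $\rr\,\zeta\otimes\zeta$, and the reduction of condition (A) to a solvable scalar equation, with the two \emph{in particular} claims handled exactly as in the paper via \cite{medina85} and \Cref{mrtheo}. If anything, your write-up is more careful at the decisive step: the paper merely asserts that the metric dual of the parallel field $\e_0$ satisfies \cref{cepde1}, which read literally would force $\tr(\Phi^2)=4(n-2)$, whereas your ansatz $\Upsilon=f(x)\,\zeta$ and the Riccati equation $f'=f^2+\tfrac{c}{n-2}$ (in the spirit of the Ricci-flat rescalings of \cite{leistner05a} mentioned in the introduction) is the argument actually needed to close the proof.
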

The statement about the Lorentzian metric Lie algebras in this theorem is already known from a more general result in \cite{leistner05a}, where it was shown that Lorentzian plane waves and hence Lorentzian symmetric spaces with solvable transvection group, the so-called {\em Cahen-Wallach spaces} \cite{cahen-wallach70}, are conformally Einstein. In fact, locally they admit {\em two different} rescalings to Ricci-flat metrics.

Finally we turn to indecomposable metric Lie algebras $\gg$ of  signature $(2,n-2)$, of which there are three nonsimple types \cite{baum-kath03}: 
the oscillator algebras $\gg=\osc_\Phi(1,n-3)$, which are conformally Einstein by \Cref{osctheo}, and the double extensions $\gg$ of $\hh$ by $\rr$ and a derivation of $\hh$, for  either $\hh=\osc_\Phi(n-4)$ or  $\hh=\osc_\Phi(n-5)\+\rr$. It turns out that both of the latter are Bach flat but are not conformally Einstein. This and  checking that no simple real Lie algebra 
has a Killing form of signature $(2,n-2)$ for $n\ge 4$ (for example in \cite[Chapter 15]{SagleWalde73}), yields our final result.

\begin{theorem}
\label{2ntheo}
The oscillator algebras $\osc_\Phi (1,n-3)$, with $\Phi\in\so(1,n-3)$, are the only indecomposable metric Lie algebras of signature $(2,n-2)$ that are  conformally Einstein.
\end{theorem}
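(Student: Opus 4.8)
The plan is to run through the three cases of the Medina--Revoy structure theorem (\Cref{mrtheo}) for an indecomposable metric Lie algebra $\gg$ of signature $(2,n-2)$ and to eliminate everything except the oscillator algebras. Since the signature $(2,n-2)$ forces $n\ge 4$, the one-dimensional case is immediately excluded, so it remains to treat the simple case and, via the Baum--Kath classification \cite{baum-kath03}, the two genuinely new double-extension families. I would organise the argument so that the simple case and the oscillator family are disposed of by results already proved, leaving the real work in the remaining two families.

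For the simple case I would invoke \Cref{simpletheoointro}. If $\gg^\C$ is not simple, the admissible bi-invariant metrics have neutral signature $(\tfrac{n}{2},\tfrac{n}{2})$, which coincides with $(2,n-2)$ only when $n=4$; but a real simple Lie algebra with non-simple complexification is a complex simple Lie algebra regarded as real, hence of real dimension at least $6$, so this sub-case does not occur. If $\gg^\C$ is simple, the metric is a multiple of the Killing form, and one is reduced to the Lie-theoretic statement that no real simple Lie algebra has Killing form of signature $(2,n-2)$ for $n\ge 4$. Writing the Cartan decomposition $\gg=\k\oplus\p$, on which the Killing form is negative definite on $\k$ and positive definite on $\p$, the signature is $(\dim\p,\dim\k)$, so one of these dimensions would have to equal $2$. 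The case $\dim\k=2$ is impossible: a compact $2$-dimensional $\k$ is abelian, hence equal to its own centre, whereas the centre of the maximal compact subalgebra of a non-compact simple Lie algebra has dimension at most $1$; and $\dim\p=2$ gives a $2$-dimensional symmetric space, forcing $\gg=\sl_2\rr$ and $n=3<4$. This is the finite inspection recorded in \cite[Chapter 15]{SagleWalde73}.

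The substantive part is the double-extension case. By \cite{baum-kath03} the nonsimple indecomposable metric Lie algebras of signature $(2,n-2)$ are precisely $\osc_\Phi(1,n-3)$, the double extension of $\hh=\osc_\Phi(n-4)$ by $\rr$, and the double extension of $\hh=\osc_\Phi(n-5)\oplus\rr$ by $\rr$. The first family is conformally Einstein by \Cref{osctheo}, so I would focus on the other two. Both are solvable, since a double extension of a solvable Lie algebra by $\ss=\rr$ is again solvable, and hence Bach flat by the corollary to \Cref{bachtheo}; equivalently one verifies $\Ric^2=0$ directly and applies \Cref{theodextric}. Thus condition (B) holds for both families, and the obstruction to being conformally Einstein must be located at the level of condition (C) or of the differential equation (A).

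The hard part is showing that these last two families fail to be conformally Einstein. Here I would write the double-extension brackets explicitly (as in \Cref{dedef}), compute the Levi-Civita connection $\nabla_XY=\tfrac12[X,Y]$, the Ricci, Schouten and Weyl tensors, and first test condition (C) by determining the Weyl nullity ideal $\n$; if $\n$ vanishes we are done immediately, and if not I would pass to the equation (A), seeking a local function $\phi$ for which the trace-free part of $\ro-\hess\phi+\d\phi\otimes\d\phi$ vanishes. Exploiting the grading of the double extension and restricting to the distinguished null directions, where the Schouten tensor is essentially pinned down, I expect (A) to reduce to an overdetermined system --- indeed, for these oscillator-type metrics, to an ODE-type constraint along the null direction in the spirit of \cite{leistner05a} --- which I would show is inconsistent. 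This is the same mechanism exhibited in \Cref{so3ex}, where (B) and (C) hold but (A) fails, and I anticipate that the explicit integration showing no conformal factor exists is the principal technical obstacle. Assembling the three cases then yields that $\osc_\Phi(1,n-3)$ are the only conformally Einstein indecomposable metric Lie algebras of signature $(2,n-2)$.
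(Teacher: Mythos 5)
Your reduction is the right one and matches the paper's: Medina--Revoy plus the Baum--Kath list of the three nonsimple indecomposable types in signature $(2,n-2)$, the elimination of the simple case by inspecting Killing-form signatures, and the disposal of $\osc_\Phi(1,n-3)$ by \Cref{osctheo}. The solvability/Bach-flatness observation for the remaining two families is also correct. But the heart of the theorem --- the proof that the double extensions of $\osc_\Phi(n-4)$ and of $\osc_\Phi(n-5)\oplus\rr$ by $\rr$ are \emph{not} conformally Einstein --- is not actually supplied: you describe a two-branch plan (``test (C); if the nullity ideal is nonzero, pass to the PDE (A)'') and defer the decisive computation, anticipating that the obstruction will appear at the level of (A) as in \Cref{so3ex}. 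That anticipation points in the wrong direction, and the missing computation is not a routine verification.

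What actually happens is that condition (C) already fails, and seeing this requires the specific algebraic mechanism of \Cref{obsextprop} and \Cref{oscexttheo}: writing out $V\hook\W=0$ for a putative element of the Weyl nullity ideal yields, besides $V^+=0$, the system (\ref{eq0})--(\ref{eq3}), which for $\hh=\osc_\Phi(\ell)$ forces either $\tr(\Phi^2)=\tr(\Psi\circ\Phi)=0$ (impossible for $0\ne\Phi\in\so(\ell)$) or the simultaneous conditions $\Psi^2W=\tfrac{1}{\ell+2}\tr(\Psi^2)W$, $\Phi^2W=\tfrac{1}{\ell+2}\tr(\Phi^2)W$, $\Psi\Phi W=\tfrac{1}{\ell+2}\tr(\Psi\circ\Phi)W$ for some $W\ne0$, whence $\tr(\Phi^2)\tr(\Psi^2)=(\tr(\Psi\circ\Phi))^2$. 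Since the trace form on $\so(\ell)$ is negative definite, this Cauchy--Schwarz equality forces $\Psi$ to be a multiple of $\Phi$, contradicting indecomposability (\Cref{oscextcor}); the case $\hh=\osc_\Phi(n-5)\oplus\rr$ is handled by the same equations after first normalising the derivation via \Cref{isotheo}. So no nonzero element of the Weyl nullity ideal exists, \Cref{obsinv} applies directly, and one never has to analyse the PDE (A) at all. Without this argument, or an equivalent one, your proof of \Cref{2ntheo} is incomplete precisely where its new content lies.
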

The results in signature $(2,n-2)$ indicate that there might not be a general pattern for the conformally Einstein property in higher signature, however the methods and results presented here should allow one to decide in specific cases whether or not the property holds. 

\subsection*{Acknowledgements}Many results in this paper were obtained as part the first author's  Master thesis \cite{kelli-thesis}, which was written at the University of Adelaide under supervision of  Michael Murray and   the second author. We would like to thank Michael Murray for his support and helpful remarks. We also thank Vicente~Cort\'{e}s and Wolfgang~Globke for useful discussions on some aspects of the paper, and Michael Eastwood for crucial comments on \Cref{so3ex}.


  \section{Conformal Einstein metrics} \label{cesec}
Let $(M,\g)$ be a semi-Riemanian manifold. Our conventions regarding the Riemann {\em curvature tensor} $\R$ and the {\em Ricci curvature} $\Ric$ are as follows:
\begin{align*}
\R(X,Y)Z&=\left[\nabla_X,\nabla_Y\right]-\nabla_{[X,Y]},& \R(X,Y,Z,V)&=\g(\R(X,Y)Z,V),\\
 \Ric(X,Y)&=\tr(Z\mapsto\R(Z,X)Y).\end{align*}
We will frequently use index notation as {\em abstract indices} (in the sense of Penrose) as well as denoting the component of tensor with respect to a basis $X_i$, e.g.,
\[\R_{ijkl}=\R(X_i,X_j,X_k,X_l),\quad   \R_{ij}=\Ric(X_i,X_j)=\R_{kij}^{\ \ \ \ k}=\g^{kl}\R_{kijl}.\]
We will also dualise tensors with the metric, i.e., raising and lowering indices using the metric and its inverse. 
Moreover, we define the {\em Schouten tensor} $\ro$ by 
\[(n-2)\ro= \Ric -\frac{\rho}{2(n-1)}\g,
\]
where $\rho$ is the {\em scalar curvature}, and the {\em Weyl tensor} as 
\[\W_{abcd}=\R_{abcd}+2(\g_{a[c} \ro_{d]b}+ \g_{b[d}\ro_{c]a}),\]
where $T_{[ab]}=\frac{1}{2} (T_{ab}-T_{ba}) $ denotes the skew-symmetrisation of a tensor.
The trace of the Schouten tensor and the scalar curvature $\rho=\g^{bc}\R_{bc}$ are related by
\[\mathsf{J}=\g^{bc}\ro_{bc}=\frac{\rho}{2(n-1)}.\]
Finally, the {\em Cotton}  and {\em Bach tensors} are given by
\[\A_{abc}=\nabla_{[b}\ro_{c]a},\qquad \B_{bc}= \ro^{ad}\W_{abcd}  -\nabla^d\A_{bcd}.\]
Finally, a semi-Riemannian manifold $(M,\g)$ is {\em Einstein} and $\g$ is an {\em Einstein metric} if its Ricci tensor, or equivalently its Schouten tensor,  is a (possibly vanishing) multiple of the metric. If $\Ric=0$, the metric is {\em Ricci-flat}.

Given a smooth manifold $M$ we say that two semi-Riemannian metrics $\g$ and $\hg$ on $M$  are {\em conformally equivalent} if there is a smooth function $\vf$ on $M$ such that $\hg=\mathrm{e}^{2\vf}\g$, and that they are {\em locally conformally equivalent} if each point in $M$ has a neighbourhood $U$ such that on $U$ the metrics $\hg|_U$ and $\g|_U$ are conformally equivalent.
The Schouten tensors $\ro$ and $\hat \ro$ of $\g$ and $\hat \g$ and their traces $\mathsf{J}$ and $\hat{ \mathsf{J}}$ are then related as follows,
\begin{equation}
\label{rotrafo}
\hat \ro = \ro -\nabla \Upsilon +\Upsilon^2 -\tfrac{1}{2} \, \g(\Upsilon,\Upsilon)\, \g,\qquad
\hat {\mathsf{J}}=\mathrm{e}^{-2\vf}\left(\mathsf{J}-\mathrm{div}(\Upsilon)-\tfrac{n-2}{2}\g(\Upsilon,\Upsilon)\right),
\end{equation}
where $\Upsilon=\d\vf$ (see for example \cite{besse87}).
Moreover, a semi-Riemannian manifold is {\em (locally) conformally flat} if it (locally) conformally equivalent to the flat metric. It is well known since \cite[p. \textsection 28]{eisenhardt49} that local conformal flatness is equivalent to the vanishing of the Weyl tensor~$\W$. Note that, even for simply connected manifolds, $\W=0$ does not imply global conformal flatness, as the example of the sphere shows.

A semi-Riemannian manifold $(M,\g)$ is {\em (locally) conformally Einstein} if $\g$  is (locally) conformally equivalent to an Einstein metric.
 The transformation of the Schouten tensor under $\g\mapsto \hat \g$ in \cref{rotrafo} reveals that the metric $\hat\g=\e^{2\vf}\g$ is Einstein if and only if the function $\vf$ on $M$ satisfies the following PDE
 \belabel{cepde}
 \ro- \nabla^2\vf + (\d\vf)^2-\frac{\rot-\Delta(\vf)+\d\vf (\nabla \vf)}{n}\ \g=0,
 \end{equation}
where $\nabla \vf$ is the gradient, $\nabla^2\vf=\nabla \d\vf$ denotes the Hessian, $\Delta(\vf)=\div(\nabla \vf)$ the Laplacian of $\vf$ and $\rot$ is the trace of $\ro$, all with respect to $\g$.
Therefore, locally the following conditions are equivalent:
\begin{enumerate}
\item $(M,\g)$ is locally conformally Einstein;
\item each point in $M$ has a neighbourhood $U$ with a function $\vf\in C^\infty(U)$ that solves the PDE (\ref{cepde});
\item each point in $M$ has a neighbourhood $U$ with a closed $1$-form $\Upsilon\in \Gamma(T^*U)$ that satisfies
\belabel{cepde1}
\ro -\nabla \Upsilon +\Upsilon^2 = \lambda \g,\end{equation}
for some function $\lambda$. 
\end{enumerate}
In fact, if $\Upsilon$ solves \cref{cepde1} for some $\lambda$, then $\lambda$ is determined by
\[\lambda=\tfrac{1}{n}\left(\mathsf{J}-\mathrm{div}(\Upsilon)+\g(\Upsilon,\Upsilon)\right).\]

\begin{remark}\label{linPDEremark}
When  substituting $\vf=-\log \sigma$, 
the PDE (\ref{cepde}) is equivalent to the linear PDE for $\sigma$, 
\begin{equation}
\label{celinpde}
\nabla^2\sigma +\sigma \ro=\mu \g,
\end{equation}
for some function $\mu$ determined by taking the trace of the equation. This enables the prolongation of this equation that leads to another equivalence to the local conformal Einstein property: the metric $\hat \g=\sigma^{-2}\g$ is an Einstein metric if and only if  $\sigma $ has no zeros and  satisfies \cref{celinpde}, which in turn is equivalent to 
$\left(\sigma, \d \sigma, -\tfrac{1}{n}(\Delta \s+\s \ro)\right)$ being a parallel section of the {\em normal conformal tractor bundle}, see \cite{bailey-eastwood-gover94,cap-slovak-book1}. Solutions to \cref{celinpde} may however have zeros along sets of measure zero, so their existence is not equivalent to the local conformally Einstein property on all of $M$ but only on a dense open subset, see for example \cite{gover05}.
\end{remark}

Analysing the transformation of the Cotton tensor and the Bach tensor, Gover and Nurowski derived the following  necessary conditions for a metric to be conformally Einstein.
\begin{theorem}[{Gover \& Nurowski \cite[Proposition 2.1]{gover-nurowski04}}]
\label{obstheo}
Let $(M,\g)$ be a semi-Riemannian manifold of dimension $n\ge 3$ that is locally conformally Einstein with Cotton, Bach and Weyl tensors $\A$, $\B$ and $\W$ respectively. Then there is a  $V\in TM$ such that
\belabel{Aobs}
\A-\W(V,.,.,.)=0,
\end{equation}
and 
\belabel{Bobs}
\B-(n-4)\W(V,.,.,V)=0.
\end{equation}
If $(M,\g)$ is not already Einstein, then $V$ does not vanish identically.
\end{theorem}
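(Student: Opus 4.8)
The plan is to reduce both obstructions to the single fact that an Einstein metric is simultaneously Cotton-flat and Bach-flat, and then to transport these vanishing statements across the conformal rescaling $\hat\g=\e^{2\vf}\g$ using the conformal transformation laws of the Cotton and Bach tensors. Throughout, the candidate vector field is $V=\Upsilon^\sharp$ (up to sign), where $\Upsilon=\d\vf$ is the closed $1$-form associated to a local conformal factor rendering $\g$ Einstein. Since the Weyl tensor $\W$ is conformally invariant, every $\W$-term below may be computed interchangeably with $\g$ or $\hat\g$, which is what makes the obstructions intrinsic to $\g$.

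First I would record the two facts about the Einstein metric $\hat\g$. In dimension $n\ge 3$ the contracted second Bianchi identity forces an Einstein metric to have \emph{constant} scalar curvature, so $\hat\ro$ is a constant multiple of $\hat\g$ and hence $\hat\A_{abc}=\hat\nabla_{[b}\hat\ro_{c]a}=0$. Moreover, since $\hat\ro^{ad}\propto\hat\g^{ad}$ and $\W$ is totally trace-free, the term $\hat\ro^{ad}\hat\W_{abcd}$ vanishes, while $\hat\nabla^d\hat\A_{bcd}=0$ by Cotton-flatness; thus $\hat\B=0$ as well. It remains to pull these two zeros back to $\g$.

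For the Cotton obstruction I would substitute the Schouten transformation \eqref{rotrafo} into $\hat\A_{abc}=\hat\nabla_{[b}\hat\ro_{c]a}$ and simplify, using that $\Upsilon$ is closed (so $\nabla\Upsilon$ is symmetric) together with the Christoffel correction $\hat\Gamma^c_{ab}-\Gamma^c_{ab}=\delta^c_a\Upsilon_b+\delta^c_b\Upsilon_a-\g_{ab}\Upsilon^c$. After invoking the Ricci identity to convert a commutator of second derivatives of $\vf$ into curvature, the derivative and quadratic $\Upsilon$-terms organise precisely into the Riemann-minus-Schouten combination defining $\W$, yielding $\hat\A_{abc}=\A_{abc}-\Upsilon^d\W_{dabc}$ (up to the paper's sign and index conventions). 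Setting this to zero gives \eqref{Aobs} with $V=\Upsilon^\sharp$.

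The Bach obstruction is the main obstacle, and I expect the bulk of the computation to lie here. One must transform $\B_{bc}=\ro^{ad}\W_{abcd}-\nabla^d\A_{bcd}$ term by term: the first term picks up the $\ro$-transformation contracted against the invariant $\W$, while the second requires transforming a \emph{divergence} of the Cotton tensor, producing second derivatives of $\Upsilon$, a $\nabla\W$-term, and further curvature contractions from commuting $\hat\nabla$ past $\hat\A$. The two key simplifications are the divergence-of-Weyl identity $\nabla^d\W_{dabc}=(n-3)\A_{abc}$, which trades $\nabla\W$-terms for Cotton terms, and the relation \eqref{Aobs} already derived, which then trades every residual Cotton term for $\Upsilon^d\W_{dabc}$. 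After these substitutions the explicit $\nabla\Upsilon$ and $\Delta\vf$ contributions cancel in pairs, and the surviving terms collapse to $\hat\B_{bc}=\e^{-2\vf}\bigl(\B_{bc}-(n-4)\,\Upsilon^a\Upsilon^d\W_{abcd}\bigr)$, the factor $(n-4)$ reflecting the failure of conformal invariance of $\B$ away from dimension four. Setting $\hat\B=0$ and recalling $V=\Upsilon^\sharp$ gives \eqref{Bobs}. For the final claim: if $V=\Upsilon^\sharp$ vanished identically on a neighbourhood then $\d\vf=0$ there, so $\vf$ is locally constant and $\g$ differs from the Einstein metric $\hat\g$ only by a constant factor, making $\g$ itself Einstein; contrapositively, if $\g$ is not Einstein then $V$ cannot vanish identically.
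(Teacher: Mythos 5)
The paper does not prove this statement itself --- it is imported directly from Gover--Nurowski \cite{gover-nurowski04} --- so there is no internal proof to compare against; your argument is exactly the standard derivation from that reference. You correctly reduce everything to the facts that an Einstein metric in dimension $n\ge 3$ is Cotton-flat and Bach-flat, and then transport these back through the conformal transformation laws of $\A$ and $\B$, using the conformal invariance of $\W$, the divergence identity $\nabla^d\W_{dabc}=(n-3)\A_{abc}$, and the already-established \cref{Aobs} to eliminate residual Cotton terms, arriving at \cref{Aobs} and \cref{Bobs} with $V=\Upsilon^\sharp$; the one caveat is that the Bach transformation law is asserted rather than computed in full, but the formula you state is the correct one, and your contrapositive argument for the final non-vanishing claim is sound.
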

In this theorem, $V$ corresponds to  the metric dual of the closed $1$-form $\Upsilon$ in \cref{cepde1}, and hence can locally be realised as a gradient vector field. Gover and Nurowski have in fact shown a much stronger result \cite[Theorem 2.2]{gover-nurowski04}: under a certain genericity condition on the Weyl tensor, the \cref{Aobs,Bobs} imply that $V$ locally is a  gradient vector field and hence that $\g$ is locally conformally Einstein. In this article we will not use this stronger version, because the genericity condition is not satisfied for the class of bi-invariant metrics we will consider. Instead, we mainly evaluate the conditions  (\ref{Aobs}) and (\ref{Bobs}) on metric Lie algebras.

  \section{Bi-invariant metrics on Lie groups}
     \label{liesec}
Let $G$ denote a metric Lie group and $\gg$ the corresponding metric Lie algebra with $\ad$-invariant scalar product $\la.,.\ra$, which we will also denote by $\g$ when emphasising that it induces a metric or when using index notation. We denote the Levi-Civita connection of $\g$ by $\nabla$. For elements $X,Y, Z\in \gg$, we have 
\[
\nabla_XY=\tfrac{1}{2}[X,Y],\quad \R(X,Y)Z= -\tfrac{1}{4} \left[\left[X,Y\right],Z\right], \quad \Ric=-\tfrac{1}{4}K,\] where $K$ is the Killing form of $\gg$, and $[X,Y]$ is the Lie bracket on $\gg$.
 All of the curvature tensors are $\ad$-invariant, that is
 \[\ad_X \R=0, \quad \ad_X\Ric=0,\quad \text{ etc.,}\]
 and hence are parallel with respect to $\nabla$. In particular, the scalar curvature is constant and the Cotton tensor  vanishes, $\A=0$.
 Moreover, Lie algebra elements that annihilate one of these tensors form an ideal. For example, for the Weyl tensor $\W$, 
 \[\n=\ann(\W)= \{X\in \gg\mid X\hook \W=0\}\subset \gg \]
 forms an ideal. We call this ideal the {\em Weyl nullity ideal}.
 
 From the conditions in \Cref{obstheo} we get the following necessary conditions for a metric Lie algebra to be conformally Einstein:
\begin{corollary}\label{obsinv}
If a metric Lie algebra $\gg$ 
is  conformally Einstein, then it is Bach flat and, 
if $\gg$ is not Einstein, 
the Weyl nullity ideal is not zero, $\n\not=\{0\}$.
\end{corollary}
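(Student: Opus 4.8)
The plan is to read the corollary off the Gover--Nurowski obstructions in \Cref{obstheo}, using the two special features of the bi-invariant setting recorded just above the statement: the Cotton tensor vanishes identically, $\A=0$, and every tensor built naturally from $\g$ (in particular $\W$ and $\B$) is left-invariant, so a pointwise tensorial identity holds at \emph{every} point as soon as it holds at a single point. By \Cref{cedef}, the hypothesis that $\gg$ is conformally Einstein means precisely that the simply connected metric Lie group $(G,\g)$ with Lie algebra $\gg$ is locally conformally Einstein. Hence \Cref{obstheo} applies to $(G,\g)$ and produces a vector field $V$ (on a neighbourhood in $G$) satisfying \cref{Aobs} and \cref{Bobs}.

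First I would dispose of the Einstein case: an Einstein metric has Schouten tensor a multiple of $\g$ and vanishing Cotton tensor, so its Bach tensor vanishes; thus $\gg$ is Bach flat and nothing further is claimed. So assume $\gg$ is not Einstein. Then \Cref{obstheo} guarantees that $V$ does not vanish identically, and I fix a point $p\in G$ with $V_p\neq 0$. Since $\A=0$, evaluating \cref{Aobs} at $p$ gives $\W_p(V_p,\cdot,\cdot,\cdot)=0$, that is, $V_p$ annihilates the Weyl tensor at $p$. Substituting into \cref{Bobs} then kills the term $(n-4)\W_p(V_p,\cdot,\cdot,V_p)$, leaving $\B_p=0$. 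As $\B$ is left-invariant, it vanishes at every point; in particular the algebraic Bach tensor of $\gg$ is zero, which is condition (B).

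It remains to produce a nonzero element of the Weyl nullity ideal $\n$. The obstruction vector $V$ need not be left-invariant --- it is the metric dual of the closed $1$-form $\Upsilon=\d\vf$ --- so $V_e$ carries no algebraic meaning, and indeed $V$ may vanish at the identity. The remedy is to transport the annihilation property from $p$ to the identity by an isometry. Writing $p=L_g(e)$ and $\phi=L_{g^{-1}}$, the differential $d\phi_p\colon T_pG\to T_eG=\gg$ is a linear isomorphism, and because $\phi$ is an isometry it carries $\W_p$ to $\W_e$. Setting $W=d\phi_p(V_p)\neq 0$, the identity $\W_p(V_p,\cdot,\cdot,\cdot)=0$ together with the surjectivity of $d\phi_p$ onto $\gg$ yields $\W_e(W,\cdot,\cdot,\cdot)=0$, i.e.\ $W\hook\W=0$. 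Hence $0\neq W\in\n$, so $\n\neq\{0\}$, which is condition (C).

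The only genuine subtlety --- and the step I would be most careful about --- is exactly this transport. \Cref{obstheo} asserts merely the existence of a not-identically-zero $V$ living in the tangent bundle, with no a priori algebraic content at the identity; everything hinges on invoking the homogeneity of the bi-invariant metric twice, once to pull the Weyl-annihilation condition back to $e$ (yielding the element of $\n$) and once to upgrade the single-point vanishing $\B_p=0$ to vanishing of the algebraic Bach tensor. The identity $\A=0$, established earlier for all metric Lie algebras, is what decouples the two obstruction equations and lets \cref{Bobs} deliver $\B_p=0$ with no further computation.
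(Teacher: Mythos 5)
Your proof is correct and follows essentially the same route as the paper: apply \Cref{obstheo} to the simply connected metric Lie group, use $\A=0$ to reduce \cref{Aobs} to $V\hook\W=0$ and hence get $\B=0$ from \cref{Bobs}, and in the non-Einstein case transport $V_p\neq 0$ to a nonzero element of $\gg$ annihilating $\W$ via invariance. Your explicit pushforward $W=d(L_{g^{-1}})_p(V_p)$ is just a spelled-out version of the paper's choice of the left-invariant field $X$ with $X|_p=V|_p$.
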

\bprf
If $\gg$ is conformally Einstein in the sense of \Cref{cedef}, the corresponding simply connected Lie group $G$ is locally conformally Einstein. This implies that
locally there is a closed one form $\Upsilon$  that satisfies  \cref{cepde1}.  Hence by \Cref{obstheo} the obstructions in (\ref{Aobs}) and (\ref{Bobs}) vanish for a tangent vector $V\in TG$. 
Since $\nabla \ro=0$ the Cotton tensor vanishes and so condition (\ref{Aobs}) reduces to $V\hook \W=0$. Hence, condition (\ref{Bobs}) implies that $\B=0$. Moreover, if $\gg$ is not already Einstein, there is a point in $p\in G$ such that $V|_p\not=0$. Let $0\not= X\in \gg$ such that $X|_p=V|_p$. Then the invariance of $\W$ shows that $X\hook\W=0$, i.e., that there is a nonzero element in the Weyl nullity ideal.
\eprf
    
Before studying the Weyl nullity ideal for metric Lie algebras in the next sections, we analyse Bach flatness. A crucial tensor for this is the square of the Ricci endomorphism, that is, the composition of the $(1,1)$-tensor $\R_i^{~j}$ with itself,  
$\R_{i}^{~k}\R_k^{~j}$,
which we denote for brevity by $\Ric^2$. We also denote its dual, the bilinear form 
$\R_{i}^{~k}\R_{kj}$,
by $\Ric^2$.
If $\Ric^2=0$ we say that $\gg$ has {\em $2$-step nilpotent Ricci tensor}. Note that $2$-step nilpotency of the Ricci tensor cannot occur for Riemannian metrics as it implies that the image of the Ricci endomorphism is totally null, i.e, light-like,
\[0=\R_{i}^{~k}\R_{kj}=\R_{i}^{~k}\R_j^{~l}\g_{kl},\]
where $\g_{kl}$ is the metric defined by the $\ad$-invariant scalar product $\la.,.\ra$.
Moreover, $2$-step nilpotency of the Ricci endomorphism implies that its image is contained in its kernel. In addition, $\Ric^2=0$ implies the vanishing of the scalar curvature (see for example \cite{aipt2} for more details).

The following lemma holds for metric Lie algebras and is crucial for reducing the Bach tensor to the Ricci tensor and its square.

\begin{lemma}
\label{ric2lem}
Let $\gg$ be a metric Lie algebra. Then its Ricci tensor satisfies
\[
\Ric^2 (X,Y)
=-
\tfrac{1}{16}
\tr^\g( K(\ad_{X}(.),\ad_{Y}(.))
=
\tr^\g_{(1,3),(2,6)}(\Ric\otimes \R)(X,Y),\]
where $ K$ is the Killing form  and $\R$ the curvature of $\gg$ defined by the metric $\g=\la.,.\ra$.
Written in index notation, this is
\[
\R_a^{~p}\R_{pb}=-\tfrac{1}{16}\g^{kl}c_{ak}^{\ \ p}c_{bl}^{\ \ q}K_{pq}
=
\R^{pq}\R_{pabq},
\]
where $c_{ak}^{\ \ p}$ are the structure constants of the Lie algebra $\gg$.
\end{lemma}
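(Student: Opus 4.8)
The plan is to turn each of the three quantities in the statement into the trace of an endomorphism of $\gg$ and to reduce the asserted equalities to a single trace identity. Write $\mathcal{K}$ for the \emph{Killing endomorphism}, the $\g$-dual of the Killing form, defined by $\g(\mathcal{K}X,Y)=K(X,Y)=\tr(\ad_X\ad_Y)$, so that the Ricci endomorphism is $\Ric=-\tfrac14\mathcal{K}$. Two structural facts do all the work. First, since both $\g$ and $K$ are $\ad$-invariant, $\mathcal{K}$ is $\g$-self-adjoint and commutes with every $\ad_X$; this is just the endomorphism form of the relation $\ad_X\Ric=0$ recorded above. Second, $\ad$-invariance of $\g$ makes each $\ad_X$ a $\g$-skew endomorphism. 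I will also use the curvature identity $\R(Z,X)Y=-\tfrac14\,\ad_Y\ad_X Z$, which is immediate from $\R(X,Y)Z=-\tfrac14[[X,Y],Z]$.

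First I would rewrite the three expressions as traces. For the left-hand side, $\Ric^2(X,Y)=\g(\Ric X,\Ric Y)=\tfrac1{16}\g(\mathcal{K}X,\mathcal{K}Y)=\tfrac1{16}K(X,\mathcal{K}Y)=\tfrac1{16}\tr(\ad_X\ad_{\mathcal{K}Y})$. For the middle expression, unravel $\tr^\g(K(\ad_X(\cdot),\ad_Y(\cdot)))=\g^{kl}K([X,X_k],[Y,X_l])$, apply $\ad$-invariance of $K$ to move one bracket across, and convert the metric contraction via $\g^{kl}K(X_k,A X_l)=\tr(\mathcal{K}A)$; this yields $-\tfrac1{16}\tr^\g(K(\ad_X(\cdot),\ad_Y(\cdot)))=\tfrac1{16}\tr(\mathcal{K}\,\ad_X\ad_Y)$. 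For the right-hand expression, the curvature identity gives $\R^{pq}\R_{pabq}=\tr\bigl(Z\mapsto\Ric(\R(Z,X)Y)\bigr)=-\tfrac14\tr(\Ric\,\ad_Y\ad_X)=\tfrac1{16}\tr(\mathcal{K}\,\ad_Y\ad_X)$. The index versions asserted in the lemma are then read off componentwise. Thus everything reduces to comparing $\tr(\ad_X\ad_{\mathcal{K}Y})$, $\tr(\mathcal{K}\,\ad_X\ad_Y)$ and $\tr(\mathcal{K}\,\ad_Y\ad_X)$.

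The crux is therefore the two coincidences $\tr(\ad_X\ad_{\mathcal{K}Y})=\tr(\mathcal{K}\,\ad_X\ad_Y)=\tr(\mathcal{K}\,\ad_Y\ad_X)$. I would prove both by forming the differences and pushing $\mathcal{K}$ through the $\ad$'s using $[\mathcal{K},\ad_{\cdot}]=0$: each difference telescopes, and the two ``double-$\ad$'' terms combine into the commutator $[\ad_X,\ad_Y]=\ad_{[X,Y]}$ paired against $\mathcal{K}$, so that, up to sign, every difference equals $\tr(\mathcal{K}\,\ad_{[X,Y]})$. This last trace vanishes: $\mathcal{K}$ is $\g$-self-adjoint while $\ad_{[X,Y]}$ is $\g$-skew and commutes with $\mathcal{K}$, so taking $\g$-adjoints inside the trace gives $\tr(\mathcal{K}\,\ad_{[X,Y]})=-\tr(\mathcal{K}\,\ad_{[X,Y]})=0$. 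Hence all three expressions agree. The one place where more than formal bookkeeping is needed — and the main obstacle — is seeing that the $\mathcal{K}$-weighted difference collapses to a single $\tr(\mathcal{K}\,\ad_{[X,Y]})$ rather than producing further structure-constant contractions; once this telescoping is recognised, $\g$-skewness of $\ad_{[X,Y]}$ finishes the proof.
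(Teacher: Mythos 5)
Your proof is correct, but it is organised quite differently from the paper's. The paper works entirely in a fixed basis: it writes $\Ric^2(X_i,X_j)=\g^{kl}\R_{ik}\R_{jl}$, substitutes $\Ric=-\tfrac14 K$ and $\R(X,Y)Z=-\tfrac14[[X,Y],Z]$, and then applies the $\ad$-invariance of $\g$ and of $K$ once each to land both the left-hand side and the term $\R^{pq}\R_{pabq}$ directly on the middle expression $-\tfrac1{16}\g^{pq}K([X_i,X_q],[X_j,X_p])$; the whole argument is two short chains of index manipulations. You instead pass to the Killing endomorphism $\mathcal{K}$ and recast all three quantities as the traces $\tfrac1{16}\tr(\ad_X\ad_{\mathcal{K}Y})$, $\tfrac1{16}\tr(\mathcal{K}\ad_X\ad_Y)$ and $\tfrac1{16}\tr(\mathcal{K}\ad_Y\ad_X)$, which you then identify using $[\mathcal{K},\ad_Z]=0$, cyclicity of the trace, and the vanishing of $\tr(\mathcal{K}\ad_{[X,Y]})$ (self-adjoint times skew-adjoint). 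This is a genuinely basis-free repackaging: the $\ad$-invariance of $K$ that the paper invokes pointwise becomes, for you, the single operator identity $\mathcal{K}\ad_Z=\ad_Z\mathcal{K}$, and it has the bonus of exhibiting the common value as $\tfrac1{16}\tr(\mathcal{K}\ad_X\ad_Y)$ and of making the symmetry in $X,Y$ transparent. The price is that you need two auxiliary observations the paper never requires, namely $\ad_{\mathcal{K}Y}=\mathcal{K}\ad_Y$ (which does follow from $[\mathcal{K},\ad_Z]=0$ for all $Z$, applied with the arguments swapped, so your toolkit suffices) and $\tr(\mathcal{K}\ad_{[X,Y]})=0$. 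One small imprecision: the difference $\tr(\ad_X\ad_{\mathcal{K}Y})-\tr(\mathcal{K}\ad_X\ad_Y)$ does not produce a commutator term at all — it vanishes identically once $\ad_{\mathcal{K}Y}=\mathcal{K}\ad_Y$ is in hand — so only the comparison of $\tr(\mathcal{K}\ad_X\ad_Y)$ with $\tr(\mathcal{K}\ad_Y\ad_X)$ actually reduces to $\tr(\mathcal{K}\ad_{[X,Y]})$; this does not affect the validity of the argument.
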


\bprf
We fix a basis $(X_1, \ldots , X_n) $ of $\gg$, and use indices for the components of tensors in this basis, where  $\R_{ij}$ denotes the components of $\Ric$.
 Then we compute
\begin{eqnarray*}
\Ric^2(X_i,X_j)
&=&
\g^{kl}\R_{ik}\R_{jl}\ =\ -\frac{1}{4}\g^{kl} \g^{pq} K_{ik}\R_{pjlq}
\\
&=&
\frac{1}{16}\g^{kl} \g^{pq} K_{ik}\, \g\left( \left[ \left[ X_j,X_p\right], X_q\right],X_l\right)
\\
&=&
\frac{1}{16}\g^{pq} K\left(X_i,  \left[ \left[ X_j,X_p\right], X_q\right]\right)
\\
&=&
-\frac{1}{16}\g^{pq} K\left( \left[ X_i, X_q\right], \left[ X_j,X_p\right]\right),
\end{eqnarray*}
 by the  $\ad$-invariance of $K$. This implies the first stated equality.
Similarly, we obtain
\begin{eqnarray*}
\R^{kl}\R_{kijl}
&=&
\frac{1}{16}\g^{pk}\g^{ql} K_{pq}\, \g\left( \left[ \left[ X_k,X_i\right], X_j\right],X_l\right)
\\
&=&
\frac{1}{16}\g^{pk} K\left( \left[ \left[ X_k,X_i\right], X_j\right], X_p\right)
\\
&=&
-\frac{1}{16}\g^{pk} K\left( \left[ X_k,X_i\right], \left[ X_p, X_j\right]\right),
\end{eqnarray*}
which gives the second equality.
\eprf

This lemma allows us to compute a formula for the Bach tensor that only involves the Ricci tensor:
\begin{proposition}
\label{bachprop}
The  Bach tensor of a metric Lie algebra of dimension  $n$  satisfies
\[
(n-2)^2\ \B= n\Ric^2 -\frac{n \rho}{n-1}\Ric +\left( \frac{\rho^2}{n-1}-\mathrm{tr}(\Ric^2)\right) \g,\]
where $\rho$ is the scalar curvature of $\gg$ and $\g=\la.,.\ra$ is the metric.
\end{proposition}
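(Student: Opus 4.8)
The plan is to use that on a metric Lie algebra all curvature tensors are $\ad$-invariant, hence parallel, so that the Cotton tensor vanishes, $\A=0$, as recorded above. The divergence term in $\B_{bc}=\ro^{ad}\W_{abcd}-\nabla^d\A_{bcd}$ therefore disappears and the Bach tensor becomes the purely algebraic contraction $\B_{bc}=\ro^{ad}\W_{abcd}$. The first step is then to insert the definition of the Weyl tensor and contract. Expanding the skew-symmetrisations gives
\[\W_{abcd}=\R_{abcd}+\g_{ac}\ro_{db}-\g_{ad}\ro_{cb}+\g_{bd}\ro_{ca}-\g_{bc}\ro_{da},\]
and contracting with $\ro^{ad}$, using the symmetry of $\ro$ and writing $\mathsf{J}=\g^{ad}\ro_{ad}$, yields
\[\B_{bc}=\ro^{ad}\R_{abcd}+2(\ro^2)_{bc}-\mathsf{J}\,\ro_{bc}-\mathrm{tr}(\ro^2)\,\g_{bc},\]
where $(\ro^2)_{bc}=\ro_{b}^{\ a}\ro_{ac}$.

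The decisive step is to rewrite the one genuinely new term, $\ro^{ad}\R_{abcd}$, the contraction of the Schouten tensor into the Riemann tensor. Substituting $(n-2)\ro=\Ric-\mathsf{J}\,\g$, which uses $\mathsf{J}=\rho/(2(n-1))$, splits it as
\[(n-2)\,\ro^{ad}\R_{abcd}=\R^{ad}\R_{abcd}-\mathsf{J}\,\g^{ad}\R_{abcd}.\]
Here $\g^{ad}\R_{abcd}=\R_{bc}$ is simply the trace defining $\Ric$, while $\R^{ad}\R_{abcd}$ is, after relabelling indices, exactly the identity $\R^{pq}\R_{pabq}=\Ric^2(X_a,X_b)$ furnished by \Cref{ric2lem}. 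This is the crucial input and the only place where the Lie-algebraic structure enters: through the $\ad$-invariance of the Killing form underlying \Cref{ric2lem}, the otherwise intractable contraction of the Riemann tensor with $\ro$ collapses to the square of the Ricci tensor, so that $\ro^{ad}\R_{abcd}=\tfrac{1}{n-2}\big((\Ric^2)_{bc}-\mathsf{J}\,\R_{bc}\big)$.

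It then remains to express $\ro$, $\ro^2$ and $\mathrm{tr}(\ro^2)$ through $\Ric$ and $\Ric^2$ and to collect terms. From $(n-2)\ro=\Ric-\mathsf{J}\,\g$ one obtains $(n-2)^2(\ro^2)_{bc}=(\Ric^2)_{bc}-2\mathsf{J}\,\R_{bc}+\mathsf{J}^2\g_{bc}$ and, taking the trace, $(n-2)^2\mathrm{tr}(\ro^2)=\mathrm{tr}(\Ric^2)-2\mathsf{J}\,\rho+n\mathsf{J}^2$. Multiplying the displayed formula for $\B_{bc}$ by $(n-2)^2$ and substituting, the coefficient of $\Ric^2$ becomes $n$, the $\mathsf{J}^2\g$-contributions cancel, and re-inserting $\mathsf{J}=\rho/(2(n-1))$ turns the coefficient of $\Ric$ into $-n\rho/(n-1)$ and that of $\g$ into $\rho^2/(n-1)-\mathrm{tr}(\Ric^2)$, which is the claimed identity. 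I expect no real obstacle in this final stage — it is routine bookkeeping of coefficients — with the entire weight of the argument sitting in the appeal to \Cref{ric2lem} to evaluate $\R^{ad}\R_{abcd}$.
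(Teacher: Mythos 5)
Your proof is correct and takes essentially the same route as the paper: both use the vanishing of the Cotton tensor to reduce $\B_{bc}$ to the algebraic contraction $\ro^{ad}\W_{abcd}$ and then invoke \Cref{ric2lem} to convert $\R^{ad}\R_{abcd}$ into $(\Ric^2)_{bc}$, the paper merely compressing your explicit expansion and coefficient bookkeeping into ``a straightforward computation shows''. Your final coefficients ($n$ for $\Ric^2$, $-n\rho/(n-1)$ for $\Ric$, and $\rho^2/(n-1)-\mathrm{tr}(\Ric^2)$ for $\g$, with the $\mathsf{J}^2\g$ terms cancelling) all check out.
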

\bprf For a metric Lie algebra the Cotton tensor is $\A_{abc}=0$, so that 
 a straightforward computation shows
 \[
 (n-2)^2\,\B_{bc}
 =
 (n-2)\R^{ad}\R_{abcd}+ 2\R_{bd}\R^d_{~c}-\frac{n\, \rho}{n-1}\R_{bc}+\left( \frac{\rho^2}{n-1}-\R_{ad}\R^{ad}\right)\g_{bc}.
 \]
This allows to use \Cref{ric2lem}  to replace contractions $\R^{ad}\R_{abcd}$ of the curvature tensor with $\Ric$ by $\Ric^2$-terms.
\eprf
This formula for the Bach tensor yields a proof of \Cref{bachtheo} from the introduction.

\begin{proof}[{\bf Proof of \Cref{bachtheo}}]
The proof is based on the following observation.
   \begin{lemma}\label{alglemma}
  Let  $A\in \mathrm{End}(\rr^n)$ be a linear map of $\rr^n$
such that 
\begin{equation}
\label{a2eq}
0=A^2+bA+c\1,\quad \text{ for $b,c\in \rr$.}\end{equation} 
Then one of the following two cases occurs: 
  \begin{enumerate}
  \item  $A=\lambda \1 +N$, with some   $N\in\mathrm{End}(\rr^n)$ such that $N^2=0$. When $N\not=0$, then  
  \[
  b=-2\lambda, \qquad c=\lambda^2.\]
  \item $A$ is diagonalisable with only two different eigenvalues $\lambda$ and $\mu$, in which case \[b=-\lambda-\mu,\qquad c=\lambda\mu.\]
  \end{enumerate}
  Conversely, every $A$ as in (1) or (2) satisfies \cref{a2eq}. 
  \end{lemma}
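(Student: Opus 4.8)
The plan is to run the whole argument through the minimal polynomial of $A$. Equation \cref{a2eq} says exactly that $A$ is annihilated by the monic quadratic $p(t)=t^2+bt+c$, so the minimal polynomial $m_A$ of $A$ divides $p$ and therefore $\deg m_A\in\{1,2\}$. The two cases of the lemma then correspond to reading off the factorisation of $m_A$, and the converse is a one-line expansion; so essentially everything is standard, and the only point needing attention is the real-versus-complex structure of the roots of $p$.

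First I would dispose of $\deg m_A=1$: here $m_A(t)=t-\lambda$ with $\lambda\in\rr$, so $A=\lambda\1$, which is case (1) with $N=0$. If $\deg m_A=2$, then since $m_A$ and $p$ are both monic of degree $2$ with $m_A\mid p$, we have $m_A=p$, and I would split according to the discriminant $b^2-4c$. If $p$ has a (necessarily real) double root $\lambda=-b/2$, then $p(t)=(t-\lambda)^2$ and $N:=A-\lambda\1$ satisfies $N^2=m_A(A)=0$ while $N\neq0$ (otherwise $\deg m_A=1$); this is case (1) with $N\neq0$, and comparing $t^2-2\lambda t+\lambda^2$ with $t^2+bt+c$ gives $b=-2\lambda$, $c=\lambda^2$. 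If instead $p$ has two distinct roots $\lambda\neq\mu$, then $m_A=(t-\lambda)(t-\mu)$ is a product of distinct linear factors, so $A$ is diagonalisable with eigenvalues exactly $\{\lambda,\mu\}$ (both occur, or else $\deg m_A=1$); this is case (2), and Vieta's relations give $b=-(\lambda+\mu)$, $c=\lambda\mu$.

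For the converse I would simply compute. In case (1), using $N^2=0$ one has $A^2=\lambda^2\1+2\lambda N+N^2=\lambda^2\1+2\lambda N$, whence $A^2-2\lambda A+\lambda^2\1=0$, so \cref{a2eq} holds with $b=-2\lambda$, $c=\lambda^2$. In case (2), on each of the two eigenspaces $(A-\lambda\1)(A-\mu\1)$ acts as zero, hence this product vanishes on all of $\rr^n$, giving $A^2-(\lambda+\mu)A+\lambda\mu\1=0$, i.e.\ \cref{a2eq} with $b=-(\lambda+\mu)$, $c=\lambda\mu$.

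There is no serious obstacle here; the single delicate point is the case $b^2-4c<0$, where $p$ has a conjugate pair of complex roots $\mu=\bar\lambda$. Then $A$ has no real eigenvalue and is diagonalisable only over $\C$, so it falls under case (2) with complex eigenvalues rather than case (1). I would flag that in the intended application, where $A=\Ric$ is $\g$-self-adjoint and the Bach-flatness constraint of \Cref{bachprop} fixes the ratio of the eigenvalues, this possibility is excluded and $\lambda,\mu$ are forced to be real, so the two listed cases are exhaustive for our purposes.
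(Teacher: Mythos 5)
Your proof is correct and is essentially the paper's argument in different clothing: the paper works directly with the Jordan normal form of $A$ over $\C$ (ruling out blocks of size $\ge 3$ by squaring them, then splitting into the size-two-block case and the diagonalisable case), which is exactly the dichotomy you extract from the divisibility $m_A \mid t^2+bt+c$. Your closing remark about the complex-conjugate-root case is well placed and consistent with the paper, which likewise reads case (2) over $\C$ and only establishes reality of the eigenvalues later, in the application to $A=\Ric$, where the relation $(k-1)\mu=-(n-k-1)\lambda$ together with $\lambda+\mu\in\rr$ forces both eigenvalues to be real.
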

\begin{proof} This lemma follows entirely from the Jordan normal form for $A$.
If $A$ is considered as a linear map on $\mathbb{C}^n$, 
 then squaring a Jordan block of size $3$ or larger shows that relation~(\ref{a2eq}) is not satisfied, which implies that such blocks cannot occur.

 Next assume that 
 $A$ has at least one Jordan block $J_\lambda$ of size $2$. Then the equation for $A$ implies that $b=-2\lambda$ and $c=\lambda^2$, which shows that $\lambda$ is real. 
 In this case any eigenvalue  $\mu$  of $A$, then satisfies 
 \[0=\mu^2+b\mu+c =(\lambda-\mu)^2,\]
 and hence $\lambda=\mu$. This is equivalent to $A=\lambda \1+N$ with $N\not=0$ and $N^2=0$.

 Finally assume that $A$ is diagonalisable. Then \cref{a2eq} for $A$ implies that $A$ can  have at most two  eigenvalues $\lambda$ and $\mu$, in which case $b=-\lambda-\mu$ and $c=\lambda\mu$.  The case of $\lambda=\mu$ is contained in the first alternative.
 
 For the converse it is straightforward to check that every $A$ as in (1) or (2) satisfies \cref{a2eq}.
 \end{proof}
For the proof of \Cref{bachtheo} first we assume that $\gg$ is Bach-flat, use Proposition \ref{bachprop} and apply the lemma to the Ricci tensor $A=\Ric$.  The Bach flatness then implies that 
\[
0
=
\Ric^2 +b\Ric +c \g,\quad\text{ with 
 $b= -\frac{ \rho}{(n-1)}$ and $c=\frac{1}{n}\left( \frac{\rho^2}{n-1}-\mathrm{tr}(\Ric^2)\right)$.}\]
Hence, by  \Cref{alglemma}, we get that either $\Ric$ is diagonalisable with two different eigenvalues or $\Ric=A=\lambda\1+N$ with  $N^2=0$ and 
with  $\rho=\tr(A)=\lambda n$ and $\tr(A^2)=n\lambda^2$.
In the latter case, 
if $N=0$, the Ricci tensor is a multiple of the metric and hence Einstein, so we assume that $N\not=0$. 
The requirements on $b$ and $c$ from \Cref{alglemma} then give that
\[
2\lambda = \frac{n\,\lambda }{(n-1)},\quad
\lambda^2 =\frac{\lambda^2}{n}\left( \frac{n^2}{n-1}-n\right)=
\frac{\lambda^2}{(n-1)}
.\]
Since $n>2 $ was assumed, this can only hold if $\lambda=0$. Hence $\Ric=N$ is $2$-step nilpotent.

In the case when $\Ric=A$ is diagonalisable with two eigenvalues $\lambda\not=\mu$ we have that $\rho=k\lambda+(n-k) \mu$, $\tr(A^2)=k\lambda^2+(n-k)\mu^2$, where $k$, $1\le k<n$, is the dimension of the eigenspace of $\lambda$ and $n-k$ the dimension of the eigen space of $\mu$. With $b$ and $c$ from \Cref{alglemma} we get
\[
\lambda+\mu=\frac{ k\lambda+(n-k)\mu }{(n-1)},
\quad
\lambda\mu= 
\frac{(k\lambda+(n-k)\mu)^2 }{n(n-1)}-\frac{k\lambda^2+(n-k)\mu^2}{n}.\]
The left equation implies that
\belabel{eigen} (k-1) \mu=-(n-k-1)\lambda, \end{equation}
This together with $\lambda+\mu\in \rr$ implies that both eigenvalues are real. Also their eigenspaces $V_\lambda$ and $V_\mu$ are orthogonal to each other:  since $\lambda\not=\mu$, 
\[\lambda \g (v_\lambda,v_\mu) = \g(Av_\lambda ,v_\mu) =\g(v_\lambda ,Av_\mu)=\mu \g(v_\lambda ,v_\mu),\]
shows that  $\g (v_\lambda,v_\mu)=0$. Then the eigenspaces are also complementary, this implies that they are non degenerate with respect to $\g$. 
An important observation is that \Cref{eigen} implies that the kernel of $A$ is either trivial or of dimension $1$.
%
%
If the kernel of $A$ has dimension $1$, it is  spanned by a non-degenerate vector. 
 This proves the only if direction in \Cref{bachtheo}.

For the converse recall  that every semi-Riemannian Einstein manifold is Bach-flat. Moreover, if the Ricci tensor of $\gg$ is $2$-step nilpotent, the scalar curvature vanishes, and by \Cref{bachprop} this implies that the Bach tensor vanishes. 
For the remaining  case in \Cref{bachtheo} with eigenvalues $\lambda\not=0$ and $\mu =-\tfrac{n-k-1}{k-1}\lambda$ one checks  that
$b= -\frac{ \rho}{(n-1)}$ and $c=\frac{1}{n}\left( \frac{\rho^2}{n-1}-\mathrm{tr}(\Ric^2)\right)$ satisfy the conditions on $b$ and $c$ in \Cref{alglemma}, which then implies that $\gg$ is Bach flat by \Cref{bachprop}.
\end{proof}
Finally in this section, let us make a remark about algebraic properties of $\gg$ implying Bach flatness.
It is well known that bi-invariant metrics on nilpotent Lie groups are Ricci-flat. Solvable Lie algebras in general are not Ricci flat, but from Cartan's solvability criterion it follows that the Killing form vanishes on the derived Lie algebra. Hence, from  \Cref{bachprop} and noting that $2$-step nilpotent linear maps have vanishing trace, we obtain the following result (which we could not locate in the literature so far):
\begin{corollary}
\label{solvcor}
Let $\gg$ be a solvable metric Lie algebra. Then $\gg$ has two-step nilpotent Ricci tensor, vanishing scalar curvature, and is Bach flat.
\end{corollary}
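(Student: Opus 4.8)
The plan is to prove the three assertions as a single short chain of implications, with the vanishing of $\Ric^2$ as the pivotal step from which the other two follow at once. The one genuine algebraic input is Cartan's solvability criterion: for a solvable Lie algebra $\gg$ the Killing form annihilates the derived algebra, that is $K([\gg,\gg],\gg)=0$.

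First I would establish that $\gg$ has $2$-step nilpotent Ricci tensor, i.e.\ $\Ric^2=0$. The idea is to feed Cartan's criterion into \Cref{ric2lem}, which already expresses the Ricci square purely through the Killing form as $\R_a^{~p}\R_{pb}=-\tfrac{1}{16}\g^{kl}c_{ak}^{\ \ p}c_{bl}^{\ \ q}K_{pq}$. The crucial observation is that the contraction $c_{ak}^{\ \ p}K_{pq}$ is nothing but $K([X_a,X_k],X_q)$, and since $[X_a,X_k]\in[\gg,\gg]$, Cartan's criterion forces every such term to vanish. Hence the whole right-hand side is zero and $\Ric^2=0$.

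Once $\Ric^2=0$ is known, the remaining two claims are routine bookkeeping. As the Ricci endomorphism is then nilpotent, all of its eigenvalues vanish, so that $\rho=\tr(\Ric)=0$ and likewise $\tr(\Ric^2)=0$. Substituting $\Ric^2=0$, $\rho=0$ and $\tr(\Ric^2)=0$ into the formula $(n-2)^2\B=n\Ric^2-\tfrac{n\rho}{n-1}\Ric+\left(\tfrac{\rho^2}{n-1}-\tr(\Ric^2)\right)\g$ of \Cref{bachprop} makes each of the three terms on the right-hand side vanish, so $\B=0$ and $\gg$ is Bach flat.

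There is no serious obstacle here: the statement is a brief sequence of implications once \Cref{ric2lem} is in hand. The only point requiring a moment's thought is recognising that the contraction appearing in \Cref{ric2lem} is exactly of the form $K(\text{bracket},\,\cdot\,)$, so that Cartan's criterion applies directly; everything downstream is substitution together with the elementary fact that nilpotent endomorphisms are traceless.
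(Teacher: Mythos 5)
Your proof is correct and follows essentially the same route as the paper: Cartan's criterion gives $K(\gg,[\gg,\gg])=0$, which through \Cref{ric2lem} forces $\Ric^2=0$, and then the vanishing of $\rho=\tr(\Ric)$ (nilpotent endomorphisms are traceless) together with the formula in \Cref{bachprop} yields $\B=0$. The paper's own argument is exactly this chain, stated more tersely in the paragraph preceding the corollary.
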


  \section{Simple metric Lie algebras}
  \label{simplesec}
  For simple Lie groups, the space of bi-invariant metrics can be described explicitly.
  
  \begin{proposition}
  Let $\gg$ be a real simple  Lie algebra. Then either 
  \begin{enumerate}
  \item The complexification $\gg^\C$ is simple and  the space of $\ad$-invariant symmetric bilinear form is one-dimensional spanned by the Killing form of $\gg$, or 
  \item the complexification $\gg^\C$ is not simple and $\gg=\hh_\rr$ is equal to a complex simple Lie algebra $\hh$ considered as real Lie algebra $\hh_\rr$. In this case the space of $\ad$-invariant symmetric bilinear forms is two-dimensional and spanned by the real and imaginary part of the Killing form of $\hh$.
 \end{enumerate} 
   \end{proposition}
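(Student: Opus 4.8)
The plan is to reduce the classification of $\ad$-invariant symmetric bilinear forms to a computation of the commutant $\mathrm{End}_\gg(\gg)$ of the adjoint representation, via Schur's lemma over $\rr$. First I would recall the standard dichotomy for the complexification: since $\gg$ is simple it is semisimple, so $\gg^\C$ is semisimple and its simple ideals are permuted by the conjugation $\sigma$ whose fixed-point set is $\gg$. Simplicity of $\gg$ forces $\sigma$ to act transitively on these ideals, so either there is a single $\sigma$-invariant ideal, and $\gg^\C$ is simple, or there are exactly two conjugate ideals $\hh$ and $\sigma(\hh)$. In the latter case $\gg=(\gg^\C)^\sigma=\{X+\sigma(X)\mid X\in\hh\}$, which is isomorphic to the realification $\hh_\rr$ of the complex simple Lie algebra $\hh$; this is case (2).

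Next, because $\gg$ is simple, its Killing form $K$ is non-degenerate and $\ad$-invariant, and so provides an isomorphism $\gg\isom\gg^*$ of $\gg$-modules. An $\ad$-invariant bilinear form $B$ is the same as a $\gg$-equivariant map $\gg\to\gg^*$, hence corresponds to an endomorphism $T\in\mathrm{End}_\gg(\gg)$ via $B(X,Y)=K(TX,Y)$, and $B$ is symmetric precisely when $T$ is self-adjoint with respect to $K$. This reduces the problem to identifying the $K$-self-adjoint elements of $D:=\mathrm{End}_\gg(\gg)$, which by Schur's lemma is a real division algebra, that is $\rr$, $\C$, or $\Q$.

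Then I would determine $D$ in each case. If $\gg^\C$ is simple, the adjoint representation is absolutely irreducible, so $D=\rr\,\mathrm{Id}$; every element is trivially $K$-self-adjoint and the invariant symmetric forms are exactly the multiples of $K$, giving (1). If $\gg=\hh_\rr$, the complex structure $J$ (multiplication by $i$ in $\hh$) commutes with $\ad$ and is not a real scalar, so $\C\subseteq D$; moreover $\gg^\C\isom\ad_{\hh}\+\ad_{\sigma(\hh)}$ splits into two complex irreducibles on which the two factors act differently, so these summands are non-isomorphic, which excludes the quaternionic case and forces $D=\rr\,\mathrm{Id}+\rr\,J\isom\C$. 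Using $K=2\,\mathrm{Re}\,K_\hh$ together with the complex bilinearity of $K_\hh$, a short computation shows $J$ is $K$-self-adjoint, so both $\mathrm{Id}$ and $J$ yield symmetric forms: $\mathrm{Id}$ gives $\mathrm{Re}\,K_\hh$ and $J$ gives a multiple of $\mathrm{Im}\,K_\hh$. Hence the space of invariant symmetric forms is two-dimensional, spanned by the real and imaginary parts of $K_\hh$, giving (2).

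The main obstacle is the bookkeeping in case (2): verifying that $D$ is exactly $\C$ and not $\Q$ — equivalently, that $\ad_{\hh}$ and $\ad_{\sigma(\hh)}$ are non-isomorphic as $\gg^\C$-modules, which holds because each ideal acts trivially on the other — and then checking the self-adjointness of $J$ and correctly identifying the two resulting forms as the real and imaginary parts of the Killing form of $\hh$.
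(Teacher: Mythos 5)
Your proposal is correct and follows exactly the route the paper itself indicates, namely Schur's lemma applied to the commutant of the adjoint representation: the paper offers no detailed proof beyond the remark that the statement ``essentially follows from Schur's lemma'' together with references, and your argument supplies those details accurately. In particular, your reduction of invariant symmetric forms to $K$-self-adjoint elements of $\mathrm{End}_\gg(\gg)$, the exclusion of the quaternionic case via the non-isomorphic ideals $\hh$ and $\sigma(\hh)$, and the identification of the forms attached to $\mathrm{Id}$ and $J$ with $\mathrm{Re}\,K_\hh$ and $\mathrm{Im}\,K_\hh$ are all sound and consistent with the paper's cited reasoning.
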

This proposition essentially follows from Schur's lemma. 
It also yields a classification of  $\ad$-invariant symmetric bilinear forms of semi-simple real Lie algebras.
It was noted in   \cite{milnor76}, \cite{medina85} and \cite{albuquerque98}, and can be obtained  from results in \cite{att05} and the fact that simple Lie algebras do not admit ad-invariant skew symmetric bilinear forms.

 Clearly, when 
the complexification $\gg^\C$ is simple,  any bi-invariant metric $\g$ is defined by the Killing form of $\gg$ and hence  Einstein.
The other case is treated in the following theorem.
  \begin{theorem}\label{simpletheo}  Let  $\gg$ be a real simple metric Lie algebra of dimension $n$ with $\ad$-invariant scalar product $\g$ and assume that   $\gg^\C$ is not simple. Then  $\gg=\hh_\rr$ for a complex simple Lie algebra $\hh$ of dimension~$m=\frac{n}{2}$ and $\g$ is of neutral signature $(m,m)$  given by
     \[\g=\lambda K_R+\mu K_I,\] where $K_R$ and $K_I$ are the bi-invarant symmetric bilinear forms corresponding to the real and imaginary part of the Killing form $K_\hh$ of $\hh$ and $\lambda$ and $\mu $ are real constants. The scalar curvature $\rho$ and  Ricci and Bach tensors of $\g$ are given by
     \[
     \Ric=-\frac{1}{2}K_R,\qquad \rho=-\frac{m\lambda }{\lambda^2+\mu^2},\qquad
     \B= \frac{m\lambda\mu }{4(\lambda^2+\mu^2)^2(2m-1)(m-1)}(\mu K_R-\lambda K_I).\]
     Moreover, $\g$ is conformally Einstein if and only if 
     \begin{enumerate}
     \item 
     $\mu=0$, in which case  $\g$ is Einstein, or when 
     \item $\lambda=0$ and $\hh=\sl_2\C$, in which case $\g$ is conformally flat. 
     \end{enumerate}
  \end{theorem}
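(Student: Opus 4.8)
The plan is to split the theorem into the algebraic curvature formulas and the conformal-Einstein classification, reducing the latter to the holomorphic geometry of $(\hh,K_\hh)$. First I would fix a $K_\hh$-orthonormal complex basis $e_1,\dots,e_m$ of $\hh$ (so $K_\hh(e_a,e_b)=\delta_{ab}$) and pass to the real basis $e_1,\dots,e_m,Je_1,\dots,Je_m$ of $\gg=\hh_\rr$, where $J$ denotes multiplication by $i$. Complex bilinearity of $K_\hh$ turns $K_R$, $K_I$ and $\g=\lambda K_R+\mu K_I$ into the block matrices
\[
K_R=\begin{pmatrix}\1&0\\0&-\1\end{pmatrix},\qquad
K_I=\begin{pmatrix}0&\1\\\1&0\end{pmatrix},\qquad
\g=\begin{pmatrix}\lambda\1&\mu\1\\\mu\1&-\lambda\1\end{pmatrix},
\]
which already exhibits the neutral signature $(m,m)$, the nondegeneracy condition $\lambda^2+\mu^2\neq0$, and $\g^{-1}=\tfrac{1}{\lambda^2+\mu^2}(\lambda K_R+\mu K_I)$. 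The identity $\Ric=-\tfrac14 K$ of \Cref{liesec}, applied to $\gg=\hh_\rr$, combined with the elementary fact that the real trace of a complex-linear endomorphism is twice the real part of its complex trace (so the Killing form of $\hh_\rr$ equals $2K_R$), gives $\Ric=-\tfrac12 K_R$ at once. Then $\rho=\mathrm{tr}_\g\Ric$, $\Ric^2$ and $\mathrm{tr}(\Ric^2)$ are read off from the block matrices, and substitution into \Cref{bachprop} with $n=2m$ yields the stated formulas for $\rho$ and $\B$ after routine simplification. The one structural point to record is that $\B$ is a nonzero multiple of $\lambda\mu(\mu K_R-\lambda K_I)$, so that, $K_R$ and $K_I$ being linearly independent, $\B=0$ if and only if $\lambda\mu=0$.

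Next I would dispose of the easy implications. By \Cref{obsinv} a conformally Einstein metric is Bach flat, hence $\lambda\mu=0$. If $\mu=0$ then $\Ric=-\tfrac{1}{2\lambda}\g$ is a multiple of $\g$, so $\g$ is Einstein, which is case (1). If $\lambda=0$ then $\Ric=-\tfrac12 K_R$ is not proportional to $\g=\mu K_I$, so $\g$ is not Einstein; here I would use that $\gg=\hh_\rr$ is simple as a real Lie algebra, so its Weyl nullity ideal $\n$ is either $\{0\}$ or all of $\gg$. Since $n=2m\ge 6>4$ and $\g$ is not Einstein, \Cref{obsinv} forces $\n\neq\{0\}$, hence $\n=\gg$, i.e. $\W=0$. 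As a conformally flat metric is automatically conformally Einstein (the flat model is Ricci flat), the entire $\lambda=0$ case reduces to a single question: for which $\hh$ is $\g=\mu K_I$ conformally flat?

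To answer this I would descend to the holomorphic curvature $\R^\C_{abcd}=-\tfrac14 K_\hh([[X_a,X_b],X_c],X_d)$, a complex-multilinear tensor with the symmetries of a curvature tensor and holomorphic Ricci $-\tfrac14 K_\hh$. Since $\g=\mathrm{Re}\big((\lambda-i\mu)K_\hh\big)$ one has $\R=\mathrm{Re}\big((\lambda-i\mu)\R^\C\big)$, so for $\lambda=0$ we get $\R=\mu\,\mathrm{Im}(\R^\C)$, while $\rho=0$ makes $\ro=-\tfrac{1}{4(m-1)}K_R$. Writing the condition $\W=0$ as the statement that $\R_{abcd}$ equals the Kulkarni--Nomizu product $\g_{ac}\ro_{bd}-\g_{ad}\ro_{bc}+\g_{bd}\ro_{ac}-\g_{bc}\ro_{ad}$, and using that the Kulkarni--Nomizu product of $K_R$ and $K_I$ is half the imaginary part of the Kulkarni--Nomizu square of $K_\hh$, conformal flatness becomes
\[
\mathrm{Im}\Big(\R^\C_{abcd}-\tfrac{1}{8(m-1)}\big((K_\hh)_{ac}(K_\hh)_{bd}-(K_\hh)_{ad}(K_\hh)_{bc}\big)\Big)=0.
\]
The key observation is that a holomorphic tensor whose imaginary part vanishes on $\hh_\rr$ must vanish, because $T(JA,\dots)=iT(A,\dots)$ with $JA$ again real forces each value $T(A,\dots)$ to be simultaneously real and purely imaginary. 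Hence conformal flatness is equivalent to $\R^\C$ having constant holomorphic curvature, i.e. to the vanishing of the holomorphic Weyl tensor of $(\hh,K_\hh)$.

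Finally I would identify when $(\hh,K_\hh)$ has constant holomorphic curvature, equivalently when $[[A,B],C]=4c\,(K_\hh(B,C)A-K_\hh(A,C)B)$ for a constant $c$. For $\hh=\sl_2\C$ a direct computation in a standard basis $(e,f,h)$ confirms this with $c=\tfrac18$ (as it must, since the Weyl tensor vanishes in complex dimension three). For any other simple $\hh$ I would pick a Cartan subalgebra and roots $\alpha,\beta$ with $\beta\neq\pm\alpha$ and $\alpha+\beta$ a root---such exist exactly when $\hh\neq\sl_2\C$---and evaluate the identity on $A=H$, $B=E_\alpha$, $C=E_\beta$: the left-hand side lies in the root space for $\alpha+\beta$ and is nonzero for suitable $H$, whereas the right-hand side vanishes since $K_\hh(E_\alpha,E_\beta)=K_\hh(H,E_\beta)=0$. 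This contradiction shows $\g=\mu K_I$ is not conformally flat, hence not conformally Einstein, unless $\hh=\sl_2\C$, which gives case (2) and completes the classification. I expect the main obstacle to be the third step: passing correctly from the real conformal-flatness condition to the purely holomorphic one. The dimension mismatch (real dimension $2m$ versus complex dimension $m$) means the real Weyl tensor is \emph{not} the real part of the holomorphic Weyl tensor---the Schouten normalisations differ---so one cannot argue through a complex Weyl tensor directly but must keep track of the explicit tensors $K_R,K_I$ and the Kulkarni--Nomizu bookkeeping, with the vanishing-imaginary-part lemma doing the decisive work.
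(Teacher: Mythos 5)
Your proposal is correct and follows essentially the same route as the paper: the block-matrix computation of $\Ric$, $\rho$ and $\B$ via \Cref{bachprop}, Bach flatness forcing $\lambda\mu=0$, simplicity of $\gg$ upgrading the nontrivial Weyl nullity ideal to $\W\equiv 0$, and then the decisive observation that vanishing of the imaginary part of a complex multilinear identity in $K_\hh$ forces the full holomorphic constant-curvature identity, which fails for rank $\ge 2$ and holds for $\sl_2\C$. The only cosmetic differences are your choice of a $K_\hh$-orthonormal basis in place of the split real form and your test of the identity on a Cartan element together with root vectors $E_\alpha,E_\beta$ with $\alpha+\beta$ a root, where the paper instead evaluates on two orthogonal Cartan elements; both yield the same contradiction with the nondegeneracy of $K_\hh$.
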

  \begin{proof}
  
  Let $\hh_0$ be  the split real form  of $\hh$ such that  $\gg=\hh_\rr=\hh_0\+\mathrm{i}\hh_0$ and let $X_a$, $a=1, \ldots ,m $, be a basis of $\hh_0$ and of $\hh$. Denote by 
  $K_{ab}=K_\hh(X_a,X_b)$ the matrix of the Killing form $K_\hh$ of $\hh$ in this basis.
  Then   in the basis $X_a, \mathrm{i} X_b$ of $\gg=\hh_\rr$, the bilinear forms 
  $K_R=\mathrm{Re}\circ K_\hh$ and $K_I=\mathrm{Im}\circ K_\hh$  considered as bilinear forms on $\gg=\hh_\rr$ are of the form
\[K_R=\begin{pmatrix}K_{ab}&0\\0&-K_{ab}\end{pmatrix},\qquad K_I=\begin{pmatrix}0&K_{ab}\\K_{ab}&0\end{pmatrix}.
\]
Note that $K_R=\tfrac{1}{2}K_\gg=-2\Ric$.
The formulae for the scalar curvature and the Bach tensor are obtained from  \Cref{bachprop} by direct computation using that the metric and its inverse are 
\[\begin{pmatrix}
\lambda K_{ab}&\mu K_{ab}
\\
\mu K_{ab}&-\lambda K_{ab}
\end{pmatrix}
,
\quad
\tfrac{1}{\lambda^2+\mu^2}
\begin{pmatrix}
\lambda K^{ab}&\mu K^{ab}
\\
\mu K^{ab}&-\lambda K^{ab}
\end{pmatrix},
\]
where $K^{ab}$ is the inverse matrix of $K_{ab}$.

  Now assume that $\gg$ is conformally Einstein and apply \Cref{obsinv}. Then $\gg$ is  Bach flat only if $\lambda\mu=0$. In the case $\mu=0$ the metric is Einstein, so we assume $\mu\not=0$ and $\lambda=0$, i.e., $\la.,.\ra=\mu K_I$, which is not Einstein, but has vanishing scalar curvature. Moreover, since $\gg=\hh_\rr$ and as there is no complex simple Lie algebra of dimension $2$, the dimension of $\gg$ is even but strictly greater than $4$. Hence, both conditions for the second obstruction in 
\Cref{obsinv} to vanish are satisfied and we conclude that the Weyl tensor $\W$ has a non-trivial  Weyl nullity ideal. Since $\W$ is $\ad$-invariant, its kernel is also $\ad$-invariant, and hence, with $\gg$ being simple, we have $\ker(\W)=\gg$, i.e., $\W\equiv 0$. 
A computation of the Weyl tensor of the metric $\mu K_I$ on $\gg=\hh_\rr$ yields
\[
4\, \W(X,Y,Z,W)= \tfrac{1}{m-1}   \mathrm{Im} \Big( K_\hh(Y,Z) K_\hh(X,W)-K_\hh(X,Z) K_\hh(Y,W)\Big)-K_I\left( \left[ \left[ X,Y\right],Z\right],W\right)\]
 for all $X,Y,Z,W \in \hh$. As $K_\hh$ is a complex bilinear form, this shows that $\W\equiv 0$ implies that 
 \begin{equation}
 \label{weyleq}
  K_\hh\left( \left[ \left[ X,Y\right],Z\right],W\right)= \tfrac{1}{m-1}    \big( K_\hh(Y,Z) K_\hh(X,W)-K_\hh(X,Z) K_\hh(Y,W)\big).\end{equation}
  If the rank of the complex Lie algebra $\hh$ is $\ge 2$, this gives a contradiction:   in \cref{weyleq}, when taking $X$ and $Y=Z$ from a Cartan subalgebra of $\hh$ such that $K_\hh(Y,Y)\not=0$ and  $K_\hh(X,Y)=0$, the left hand side vanishes  (as the Cartan subalgebra is abelian) and thus
  \[0=    K_\hh(Y,Y) K_\hh(X,W),
  \]
  for all $W\in \hh$, which contradicts the non-degeneracy of the Killing form. 
  
  When the rank of $\hh$ is $1$, i.e., when   $\hh=\sl_2\C$, this argument breaks down and  it can be checked directly that equation \cref{weyleq} is indeed  satisfied. Taking the imaginary part of this equation, then shows that for $\hh=\sl_2\C$ the metric  $\g=\mu K_I$ on $\gg=\hh_\rr$ has $\W=0$, i.e., is conformally flat.
  \end{proof}
  \section{Metric Lie algebras via double extensions}
  \subsection{Double extensions}
 \label{desec}
  
  Let $\hh$ be metric Lie algebra with $\ad$-invariant scalar product $ \la.,.\ra_\hh$ and let $\ss$ be Lie algebra. Moreover, let $\delta:\ss\to \mathfrak{der}(\hh)\cap \so(\hh)$ be a Lie algebra homomorphism into the skew symmetric  derivations of $\hh$, that is, $\la \delta_S (X),Y\ra_\hh =-\la X,\delta_S (Y)\ra_\hh$ where we denote by $\delta_S$ the image of $S\in \ss$ under $\delta$. Also, for $X\in \hh$ we denote by $\delta_\bullet (X)$ the linear map from $\ss$ to $\hh$ that sends $S$ to $\delta_S(X)$. 
 
 In this setting the first step in defining a double extension of $\hh$ is to define the {\em central extension} of $\hh$ by $\ss^*$, where $\ss^*$ is the dual vector space to $\ss$, that is given by the cocycle in $Z^2(\hh,\ss^*)$ defined by
 \[\hh\times \hh \ni (X,Y)\longmapsto \la \delta_\bullet (X),Y\ra_\hh\in \ss^*.\]
We denote this central extension by $\ss^*\+_\delta \hh$.   Recalling the definition of a central extension given by a cocycle, the Lie bracket of $\ss^*\+_\delta \hh$ is given by
\[ \left[\begin{pmatrix}\sigma\\ X\end{pmatrix}
,\begin{pmatrix} \hat\sigma \\\hat X\end{pmatrix}\right]=\begin{pmatrix}    \la \delta_\bullet (X), \hat X \ra_\hh \\ \big[X,\hat X\big]\end{pmatrix}\ \in \ \ss^*\oplus \hh.\]

 Next,   we consider the adjoint representation  $\ad $ of $\ss$ and its dual $\ad^*$, the co-adjoint representation of $\ss $ on $\ss^*$  given by $\ad^*_S(\sigma):=\sigma\circ \ad_S\in \ss^*$. Similarly, we denote by $\ad^*_\bullet(\sigma)$ the map that sends $S\in \ss$ to $\ad^*_S(\sigma)\in \ss^*$. This allows us to
extend the map $\delta:\ss\to \der(\hh)$ to a map from $\ss$ to $\der(\ss^*\+_\delta\hh)$, which we also denote by $\delta$,
 \begin{equation}
 \label{delta}
 \delta_S \begin{pmatrix} \sigma\\ X \end{pmatrix}=\begin{pmatrix}  \ad_S^*(\sigma) \\ \delta_S(X)
 \end{pmatrix}.
 \end{equation}
 
 \begin{definition}\label{dedef}
 Let $\hh$ be metric Lie algebra with $\ad$-invariant scalar product $\la.,.\ra_\hh$, let $\ss$ be Lie algebra with an $\ad$-invariant bilinear form $\mathrm{b}$ and let  $\delta:\ss\to \mathfrak{der}(\hh)\cap \so(\hh)$ be a Lie algebra homomorphism into the skew derivations of $\hh$. 
  Then the {\em double extension $\gg$ of $\hh$ by $\ss$ and $\delta$},   is the metric Lie algebra that is given by the semidirect sum of the central extension $\ss^*\+_\delta\hh$ with  $\ss$ and the the map $\delta$ in \cref{delta}, 
   \[
   \gg=(\ss^*\oplus_\delta \hh) \rtimes_\delta \ss,
   \]
together with the  $\ad$-invariant inner product
   $\tg$ given by
   \[
\big\la  \begin{pmatrix} \sigma \\ X \\ S \end{pmatrix}  , \begin{pmatrix} \hat\sigma \\ \hat X\\  \hat S \end {pmatrix}\big\ra_{ \b} 
   =
   \la X,\hat X\ra_\hh+\mathrm{b}(S,\hat S) +\sigma(\hat S)+\hat \sigma (S).\]
 \end{definition}
 
 Recalling the definition of the semidirect sum $\rtimes_\delta$,  the Lie bracket in $\gg$ is given in the splitting 
$\ss^*\oplus \hh \+ \ss$ by
\begin{equation}\label{debracket}
\left[\begin{pmatrix}\sigma \\ X \\ S\end{pmatrix}, \begin{pmatrix}\hat \sigma \\ \hat X \\ \hat S\end{pmatrix}\right] =
\begin{pmatrix}\ad_S^*(\hat \sigma)- \ad_{\hat S}^*( \sigma)+\la\delta_\bullet (X),\hat X\ra_\hh \\[1mm] [X,\hat X]+ \delta_S(\hat{X})-\delta_{\hat S}(X) \\[1mm] [S,\hat S]\end{pmatrix},
\end{equation}
or in terms of the adjoint representation
   \begin{equation}
   \label{dead}
   \ad_{(\sigma,X,S)}=\begin{pmatrix}
   \ad_S^* & \la \delta_\bullet(X),.\ra_\hh &-\ad^*_\bullet (\sigma) \\
   0& \ad^\hh_X+\delta_S &-\delta_\bullet (X) \\
   0&0&\ad_S
   \end{pmatrix},\end{equation}
   for $\sigma\in \ss^*$, $X\in \hh$ and $S\in \ss$, and $\ad^\hh$ the adjoint representation of $\hh$. In particular,
   $\ss^*$ is an abelian ideal and $(\ss^*)^\perp=\ss^*\+_\delta \hh$ is an ideal in $\gg$, and that $\ss$ is a subalgebra of $\gg$.    
   A double extension admits several exact sequences of Lie algebras,  for $\gg$
   \begin{eqnarray}
    \label{exactg1}
   0\ \longrightarrow \  \ss^*\ \hookrightarrow & \gg &\stackrel{p}{\twoheadrightarrow}\  \hh\rtimes_\delta\ss\simeq \gg/(\ss)^*\  \longrightarrow\  0
\\
   \label{exactg}
   0\ \longrightarrow\   (\ss^*)^\perp \ \hookrightarrow & \gg & \stackrel{q}{\twoheadrightarrow}\ \ss\simeq \gg/(\ss^*)^\perp \ 
    \longrightarrow\  0
   \end{eqnarray}
and one for $(\ss^*)^\perp$,
   \begin{equation}
   \label{exacts}
   0\ \longrightarrow\   \ss^* \hookrightarrow\  (\ss^*)^\perp\  \stackrel{r}{\twoheadrightarrow}\  \hh\simeq(\ss^*)^\perp/\ss^* \ \longrightarrow\  0.
   \end{equation}

    The importance of double extensions stems from the  remarkable structure result for indecomposable metric Lie algebras by 
    Medina \& Revoy \cite{medina-revoy85} (see \Cref{mrtheo} in our Introduction), which states that every nonabelian, nonsimple, indecomposable metric Lie algebra is a double extension by a simple or a $1$-dimensional Lie algebra. An interesting algebraic fact that was already observed in \cite{baum-kath03} is the following:    
    \begin{lemma}\label{abellemma}
Let  $\g=(\ss^*\oplus_\delta \hh) \rtimes_\delta \ss$ be a double extension by an abelian Lie algebra $\ss$. Then the metric Lie algebras 
$(\gg,\la.,. \ra_{\b})$ and $(\gg,\la., . \ra_{0})$ are isomorphic as metric Lie algebras.
    \end{lemma}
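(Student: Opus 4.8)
The plan is to produce an explicit linear map $F\colon\gg\to\gg$ that is at once a Lie algebra automorphism and an isometry from $(\gg,\la.,.\ra_\b)$ onto $(\gg,\la.,.\ra_0)$. Since the two inner products of \Cref{dedef} differ only by the summand $\b(S,\hat S)$ built from the bilinear form on $\ss$, the natural choice is a map that alters only the $\ss^*$-component so as to absorb this term. Concretely, I would set
\[
F\begin{pmatrix}\sigma\\X\\S\end{pmatrix}=\begin{pmatrix}\sigma+\tfrac12\,\b(S,\,\cdot\,)\\X\\S\end{pmatrix},
\]
where $\b(S,\,\cdot\,)\in\ss^*$ denotes the functional $\hat S\mapsto \b(S,\hat S)$. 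As $S\mapsto \tfrac12\b(S,\,\cdot\,)$ is linear, $F$ is a linear bijection, with inverse obtained by flipping the sign of the correction term.

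First I would check the isometry property by substituting into the formula for $\la.,.\ra_0$ in \Cref{dedef}. Writing $v=(\sigma,X,S)$ and $w=(\hat\sigma,\hat X,\hat S)$, the pairing $\la F(v),F(w)\ra_0$ equals $\la X,\hat X\ra_\hh+\sigma(\hat S)+\hat\sigma(S)$ plus the extra contribution $\tfrac12\b(S,\hat S)+\tfrac12\b(\hat S,S)$ coming from the shifted $\ss^*$-components paired against $\hat S$ and $S$. Since $\b$ is symmetric this extra term is exactly $\b(S,\hat S)$, so that $\la F(v),F(w)\ra_0=\la v,w\ra_\b$, as required.

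The key step is verifying that $F$ is a homomorphism, and this is where the hypothesis on $\ss$ enters decisively. Because $\ss$ is abelian, $\ad_S=0$ for all $S$, hence $\ad_S^*=0$ and $[S,\hat S]=0$. Reading off the bracket \cref{debracket}, the $\ss^*$-component of $[v,w]$ collapses to $\la\delta_\bullet(X),\hat X\ra_\hh$, which involves neither $\sigma$ nor $\hat\sigma$, while the $\ss$-component of $[v,w]$ vanishes. Consequently the shift introduced by $F$ cannot enter $[F(v),F(w)]$ (it would only appear through the now-vanishing $\ad^*$-terms), and $F$ applied to $[v,w]$ adds $\tfrac12\b([S,\hat S],\,\cdot\,)=0$; comparing the three components then gives $[F(v),F(w)]=F([v,w])$ directly, the $\hh$- and $\ss$-components being untouched by $F$ and agreeing trivially.

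The main obstacle is conceptual rather than computational: one must spot that abelianness of $\ss$ is precisely what decouples the $\ss^*$-correction from the bracket, so that the single shift removing the $\b$-term from the metric is automatically compatible with the Lie bracket. For non-abelian $\ss$ the argument breaks down, since making $F$ a homomorphism with $\beta(S)=\tfrac12\b(S,\,\cdot\,)$ would require the cocycle identity $\ad^*_S\beta(\hat S)-\ad^*_{\hat S}\beta(S)=\beta([S,\hat S])$, for which there is no reason. Once $F$ is written down, both verifications reduce to short direct computations from \cref{debracket} and \Cref{dedef}.
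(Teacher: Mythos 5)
Your proof is correct and follows essentially the same route as the paper: an explicit linear map shifting along the $\ss^*$-direction by a multiple of $\b(S,\cdot)$, verified to be an isometry directly from \Cref{dedef} and a Lie algebra homomorphism by observing that abelianness of $\ss$ kills the $\ad^*$-terms and $[S,\hat S]$ in \cref{debracket}. Your coefficient $\tfrac12$ is in fact the one that makes the isometry computation close (the paper's $F(S_i)=-\b_{ik}\sigma^k+S_i$ is off by a factor of $2$), so the proposal is, if anything, slightly more careful.
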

    \bprf
    Let $S_1,\ldots , S_r$ be a basis of $\ss$, $\b_{ij}=\b(S_i,S_j)$, and $\sigma^i$ a dual basis to $S_i$. Then the vector space isomorphism $F:\gg\to \gg$ defined  by
   \[
 F|_{\ss^*\+\hh }=\mathrm{Id},\quad F(S_i)=-\b_{ik}\sigma^k+S_i,\]
   is an isometry between 
   $(\gg,\la.,.\ra_{\gg,\b})$ and $(\gg,\la.,.\ra_{\gg,0})$, that is, $F^*\la.,.\ra_{\gg,\b}=\la.,.\ra_{\gg,0}$.  That $F$ is also a Lie algebra isomorphism can be easily checked using the assumption that $\ss$ is abelian.   
        \eprf
        
 \begin{remark}
 The computation that is used to show that $F$ is a Lie algebra homomorphism breaks down when $\ss$ is not abelian. Indeed, let $c_{~ij}^k$ be the structure constants of $\ss$. The only terms that prevent $F$ from being a Lie algebra homomorphism are
\begin{eqnarray*}
[ F( S_i),F(S_j)]-F([S_i,S_j])
&=&
\b_{ik}\ad_{S_j}^*(\sigma^k)-\b_{jk}\ad_{S_i}^*(\sigma^k)+c_{~ij}^k \b_{kl}\sigma^l
\\
&=&
(\b_{ik}c_{~jl}^k-\b_{jk}c_{~il}^k +\b_{kl}c_{~ij}^k)\sigma^l
\\
&=&
\tfrac{3}{2}(\b_{ik}c_{~jl}^k-\b_{jk}c_{~il}^k)\sigma^l
\\
&=&
3\b_{ik}c_{~jl}^k\sigma^l
\end{eqnarray*}
using \cref{debracket} and the $\ad^{\ss}$-invariance of $\b$, i.e., that $\b_{kl}c_{~ij}^k =\b_{ki}c_{~jl}^k=-\b_{kj}c_{~il}^k$,  in the two last steps.
 \end{remark}
 In the case of abelian $\ss$, the result in \Cref{abellemma} allows us to assume without loss of generality that $\b=0$, in which case we  denote the $\ad^\gg$-invariant inner product by $\la.,.\ra:=\la.,.\ra_{0}$.



  Now, for a metric double extension,  we will provide a formula for the Ricci tensor (equivalently its Killing form, see also \cite{baum-kath03}) and its square.
  Let  $\gg=(\ss^*\oplus_\delta \hh) \rtimes_\delta \ss$ be a metric double extension with invariant scalar product $\la.,.\ra$ and let $\Ric$ be its Ricci tensor. We identify  $\Ric$ with $-\tfrac{1}{4} K$, where $K$ is the Killing form of $\gg$. 
  Multiplying two matrices in (\ref{dead}) and taking their trace shows that 
   \[\ss^*\hook \Ric=0,\]
   and
   \begin{equation}\label{ric}
   \begin{array}{rcl}
   \Ric(X,Y)&=&-\frac{1}{4} K_\hh (X,Y),\quad\text{ for }X,Y\in \hh,
   \\
   \Ric(S,T)&=&
     -\frac{1}{4}\left(  2K_\ss(S,T)  +\tr(\delta_S\circ\delta_T)\right),\quad\text{ for }S,T\in \ss,
   \\
   \Ric(X,S) &=&-\frac{1}{4} \tr(\delta_S\circ \ad^\hh_X) \\
    & =& -\frac{1}{4}\tr(\ad^\hh_X\circ \delta_S ) \ =\ \Ric(S,X), \ \ \text{ for $S\in \ss$ and $X\in \hh$}.
\end{array}
  \end{equation}
  Here $K_\hh$ and $K_\ss$ are the  Killing forms of $\hh$ and $\ss$, and  $\tr$ denotes the trace of a linear map.
 For future reference we define $\eta \in \hh^*\otimes \ss^*$  by
  \[
  \eta(X,S)= \tr(\delta_S\circ \ad^\hh_X)
  =
  -\h^{ij}\h(X,\ad_{\e_i}\circ \delta_S (\e_j)).
  \]
  Here, $\e_i$ is a basis of $\hh$. For brevity, we will also write $\tr(\delta^2)\in \otimes^2\ss^*$ for  $\tr(\delta_S\circ \delta_{\hat S})$ when $S,\hat S\in \ss$. 
For the square of $\Ric$ this implies 
 \begin{equation}\label{ric2}
  \begin{array}{rcl}
   \Ric^2(X,Y)&=&\frac{1}{16} K^2_\hh (X,Y),\quad\text{ for }X,Y\in \hh,
   \\
     \Ric^2(S,T)&=&
     \frac{1}{16} \eta( \eta(.,S)^\sharp,T),\quad\text{ for }S,T\in \ss,
   \\
    \Ric(X,S) &=&\frac{1}{16}
     K_\hh(X,\eta(.,S)^\sharp), \quad \text{ for $S\in \ss$ and $X\in \hh$},
\end{array}
     \end{equation}
     where $\eta(.,S)^\sharp\in \hh$ denotes the dualisation of the one-form $\eta(.,S) \in \hh^*$ with respect to $\la.,.\ra_\hh$, i.e., 
     $\la \eta(.,S)^\sharp, X\ra_\hh = \eta(X,S)$ for all $X\in \hh$.
     Moreover the scalar curvature of $\gg$ is given by
     \[\rho=-\tfrac{1}{4}\tr (K_\hh)=-\frac{1}{4} \h^{ij}K_\hh(\e_i,\e_j).\]
These observations and \Cref{bachtheo} allow us to prove
\Cref{theodextric} in the introduction:

\begin{proof}[{\bf Proof of \Cref{theodextric}}]
Note that $\ss^*\hook\Ric=0$ but    $\ss^*\hook \g\not=0$. This already implies the first equivalence, that $\gg$ is Einstein if and only if it is Ricci flat.

For the second equivalence, that $\gg$ is Bach flat if and only if $\Ric^2=0$,  we use    \Cref{bachtheo}, and we have to exclude the  possibility of a diagonalisable Ricci tensor  For the double extension however, the abelian ideal $\ss^*$ is always in the kernel of $\Ric$, which excludes the case of two nonzero eigen values, so  according to \Cref{bachtheo} we are left with a one dimensional and non-degenerate kernel of $\Ric$.  But this contradicts the fact that  $\ss^*$ is null. Hence the only remaining possibility for Bach flatness in \Cref{bachtheo} is $\Ric^2=0$. 
\end{proof}

For completeness we collect a few observations that are interesting, but not necessarily needed for our main results. Here, in the case where we assume $\ss=\rr$, we fix an $0\not= S\in \ss$ and a $\delta=\delta_S$. 
\begin{proposition}
Let  $\gg=(\ss^*\oplus_\delta \hh) \rtimes_\delta \ss$ be  a double extension. 
\bnum
\item 
If $\gg$ is solvable/nilpotent, then  $\hh$ and $\ss$ are solvable/nilpotent.
\item
If $\hh $ and $\ss$ are solvable, then $\gg$ is solvable.
\item If $\hh$ is  nilpotent, $\ss=\rr$ and $\delta$ is a nilpotent derivation, then $\gg$ is nilpotent.
\enum
\end{proposition}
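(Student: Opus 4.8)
The plan is to treat the three statements separately, deriving (1) and (2) from the standard closure of the classes of solvable and nilpotent Lie algebras under subalgebras, quotients and extensions, and reserving the genuine work for (3). For (1), I would read the two constituents $\hh$ and $\ss$ off the exact sequences \eqref{exacts} and \eqref{exactg}. The sequence \eqref{exactg} presents $\ss\cong\gg/(\ss^*)^\perp$ as a quotient of $\gg$, so solvability (resp.\ nilpotency) of $\gg$ passes to $\ss$. For $\hh$, observe that $(\ss^*)^\perp$ is an ideal, hence a subalgebra, of $\gg$ and is therefore solvable (resp.\ nilpotent) whenever $\gg$ is; the sequence \eqref{exacts} then exhibits $\hh\cong(\ss^*)^\perp/\ss^*$ as a quotient of $(\ss^*)^\perp$, which inherits the property in turn.

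For (2), I would run the same two sequences in the opposite direction, using that an extension of a solvable algebra by a solvable ideal is solvable. As $\ss^*$ is abelian, hence solvable, and $\hh$ is solvable by hypothesis, \eqref{exacts} gives that $(\ss^*)^\perp$ is solvable; combined with $\gg/(\ss^*)^\perp\cong\ss$ solvable, the sequence \eqref{exactg} then yields that $\gg$ is solvable. This argument fails for nilpotency, which is not preserved under extensions, and this is precisely why (3) carries the extra hypotheses and needs a different method.

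For (3), with $\ss=\rr$, a fixed $0\neq S\in\ss$ and $\delta=\delta_S$, I would establish nilpotency of $\gg$ via Engel's theorem, i.e.\ by proving that $\ad_Z$ is a nilpotent endomorphism of $\gg$ for every $Z=(\sigma,X,S')\in\gg$. Because $\ss$ is abelian, the matrix form \eqref{dead} of the adjoint representation collapses: the entries $\ad_{S'}$, $\ad_{S'}^*$ and $\ad^*_\bullet(\sigma)$ all vanish, so with respect to the splitting $\ss^*\oplus\hh\oplus\ss$ the endomorphism $\ad_Z$ is block upper triangular with diagonal blocks $0$ on $\ss^*$, the map $\ad^\hh_X+\delta_{S'}$ on $\hh$, and $0$ on $\ss$. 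Such an operator is nilpotent exactly when its diagonal blocks are, so the whole statement reduces to a single assertion: $\ad^\hh_X+\delta_{S'}$ is a nilpotent endomorphism of $\hh$ for every $X\in\hh$ and $S'\in\ss$.

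This reduction is the crux and the main obstacle, since a sum of two nilpotent operators need not be nilpotent. The way through is the derivation identity $[\delta,\ad^\hh_Y]=\ad^\hh_{\delta Y}$, which shows that commuting $\delta$ past an inner derivation only produces another inner derivation. I would use it to construct a single complete flag of $\hh$ on which $\delta$ and all inner derivations $\ad^\hh_Y$ are simultaneously strictly upper triangular: the lower central series $\hh=\hh^{(1)}\supset\hh^{(2)}\supset\cdots$ of the nilpotent algebra $\hh$ is strictly lowered by every $\ad^\hh_Y$, while $\delta$ preserves each $\hh^{(k)}$ (being a derivation) and acts nilpotently on each quotient $\hh^{(k)}/\hh^{(k+1)}$ (being a nilpotent operator), so refining the filtration inside each quotient by a $\delta$-adapted flag yields the desired simultaneous flag. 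In this flag $\ad^\hh_X+\delta_{S'}=\ad^\hh_X+s\delta$, with $s\in\rr$ determined by $S'=sS$, is strictly upper triangular, hence nilpotent, which by the previous paragraph completes the proof. Equivalently, and as a cross-check I would mention, the identity lets one normal-order words in the generators $\{\ad^\hh_Y\}\cup\{\delta\}$ and count that every product of at least $cd$ of them vanishes, where $c$ is the nilpotency class of $\hh$ and $d$ the nilpotency index of $\delta$; this gives $(\ad^\hh_X+\delta_{S'})^{cd}=0$ directly.
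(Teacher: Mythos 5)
Your proof is correct and takes essentially the same route as the paper: parts (1) and (2) via the exact sequences and the closure of solvability/nilpotency under subalgebras, quotients and (for solvability) extensions, and part (3) via Engel's theorem applied to the block form \eqref{dead} of the adjoint maps. The only difference is that the paper dismisses the crux of (3) --- the nilpotency of $\ad^\hh_X+\delta_{S'}$ --- as ``easy to check,'' whereas you supply the simultaneous-flag (equivalently, normal-ordering) verification, which is precisely the argument the paper has in mind; compare its \Cref{nilplemma}, proved with the same lower-central-series idea.
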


\bprf
The proof is based on the maps in the two exact sequences (\ref{exactg}) and (\ref{exacts}).
First assume that $\gg$ is solvable/nilpotent. Then $\ss$, as a homomorphic image under the projection $q$ in (\ref{exactg}), is solvable/nilpotent, and the subalgebra $(\ss^*)^\perp$ is solvable/nilpotent. Hence by the projection $r$ in (\ref{exacts}), $\hh$ is solvable/nilpotent. 

For the second point assume that $\ss$ and $\hh=(\ss^*)^\perp/(\ss^*)$ are solvable. Since $\ss^*$ is a central ideal, this implies that $(\ss^*)^\perp$ is a solvable ideal in $\gg$. But $\gg/(\ss^*)^\perp\simeq \ss$ is solvable and consequently $\gg$ is solvable.

For the third point, recall Engel's Theorem that a Lie algebra is nilpotent if and only if all its adjoints $\ad_X$ are nilpotent linear maps. It is easy to  check that the   linear maps $\ad_{(\s,X,S)}$ in \cref{dead} are nilpotent whenever $\hh$ and $\delta$ are nilpotent. 
 \eprf
A counter example to the implication ``$\hh$ and $\ss$ nilpotent implies $\gg$ nilpotent'' is given by the oscillator algebras (see \Cref{oscsubsec}), for which both $\hh$ and $\ss=\rr$ are abelian, but $\gg$ is only solvable but not nilpotent. Here $(\ss^*)^\perp$ is not central in $\gg$.
However, if $\hh$ is nilpotent and $\ss=\rr$, we can describe the Ricci tensor more precisely: 
\begin{proposition}\label{nilpprop}
Let $\gg$ be a double extension with $\hh$ nilpotent, $\ss=\rr$ and a derivation $\delta$. Then $\Ric= \tr(\delta^2)$ and $\Ric^2=0$, and hence $\B=0$.
\end{proposition}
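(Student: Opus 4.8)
The plan is to read all three assertions off the Ricci formula \cref{ric} for a double extension, feeding in the three hypotheses that $\hh$ is nilpotent, that $\ss=\rr$, and that $\delta$ is a derivation, and then to deduce Bach-flatness from an earlier result of this section. By \Cref{abellemma} I may assume $\b=0$, so that the metric pairs $\ss^*$ dually with $\ss$, restricts to $\la\cdot,\cdot\ra_\hh$ on $\hh$, and makes $\ss^*$ and $\ss$ each isotropic. To pin down the shape of $\Ric$: since $\hh$ is nilpotent its Killing form vanishes, $K_\hh=0$, so the block $\Ric(X,Y)=-\tfrac14 K_\hh(X,Y)$ is zero; since $\ss=\rr$ is abelian, $K_\ss=0$, and the block $\Ric(S,T)=-\tfrac14\bigl(2K_\ss(S,T)+\tr(\delta_S\circ\delta_T)\bigr)$ collapses to $-\tfrac14\tr(\delta_S\circ\delta_T)$. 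Together with $\ss^*\hook\Ric=0$ (read off from \cref{dead}), the only component still to be controlled is the mixed one, $\Ric(X,S)=-\tfrac14\tr(\delta_S\circ\ad^\hh_X)$.

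The substantive step is to show that this mixed block vanishes, i.e.\ that $\tr(\delta_S\circ\ad^\hh_X)=0$. Here I would use the lower central series $\hh=\hh_1\supseteq\hh_2\supseteq\cdots$, $\hh_{k+1}=[\hh,\hh_k]$, which terminates because $\hh$ is nilpotent. A derivation preserves each term, $\delta_S(\hh_k)\subseteq\hh_k$ (an easy induction from the Leibniz rule), while $\ad^\hh_X$ strictly lowers the filtration, $\ad^\hh_X(\hh_k)=[X,\hh_k]\subseteq\hh_{k+1}$. Hence $\delta_S\circ\ad^\hh_X$ maps $\hh_k$ into $\hh_{k+1}$ for every $k$, so its iterates push $\hh$ eventually into $\hh_{m+1}=0$; it is therefore a nilpotent endomorphism and in particular traceless. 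This gives $\Ric(X,S)=0$ and completes the first assertion: $\Ric$ is carried entirely by its $\ss\times\ss$ block, which is $\tr(\delta^2)$ up to the universal normalisation $-\tfrac14$ inherent in $\Ric=-\tfrac14 K$.

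For $\Ric^2=0$ I would pass to the Ricci endomorphism $\hat\Ric$, determined by $\la\hat\Ric Z,W\ra=\Ric(Z,W)$. Combining the block structure just obtained with the explicit metric, one checks directly that $\hat\Ric$ annihilates $\ss^*$ (because $\ss^*\hook\Ric=0$) and $\hh$ (because all of $\Ric(X,\cdot)$ vanishes), while for $S\in\ss$ the equation $\la\hat\Ric S,\cdot\ra=\Ric(S,\cdot)$ forces $\hat\Ric S\in\ss^*$, since $\Ric(S,\cdot)$ is supported on $\ss$ and the metric pairs $\ss$ with $\ss^*$. Thus $\im\hat\Ric\subseteq\ss^*\subseteq\ker\hat\Ric$, whence $\hat\Ric^2=0$, that is $\Ric^2=0$. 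Finally $\B=0$ is immediate from part (2) of \Cref{theodextric}, or directly from \Cref{bachtheo}, since a $2$-step nilpotent Ricci tensor is Bach flat.

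The main obstacle is the vanishing of the mixed term $\tr(\delta_S\circ\ad^\hh_X)$; once the filtration argument is in place, the shape of $\Ric$, the nilpotency $\Ric^2=0$, and the conclusion $\B=0$ are all bookkeeping with the formulas of this section.
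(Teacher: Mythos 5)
Your proof is correct and follows essentially the same route as the paper: the heart of the matter is that $\delta_S\circ\ad^\hh_X$ is nilpotent (hence traceless) for nilpotent $\hh$, which the paper establishes in \Cref{nilplemma} via the same lower-central-series filtration argument you give, only phrased through the commutator identity $[\delta,\ad_X]=\ad_{\delta(X)}$ rather than directly from the Leibniz rule. The remaining steps --- $K_\hh=0$, $K_\ss=0$, the block structure of $\Ric$ forcing $\im\hat\Ric\subseteq\ss^*\subseteq\ker\hat\Ric$, and Bach-flatness from \Cref{theodextric} --- match the paper's bookkeeping.
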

\begin{proof}
The proof is based on the following observation:
\begin{lemma}\label{nilplemma}
Let $\hh$ be a nilpotent Lie algebra and $\delta$ a derivation of $\hh$. Then for all $X\in \hh$, the linear map $\delta\circ \ad_X$ is nilpotent and in particular $\tr(\delta\circ \ad_X)=0$.
\end{lemma}
\begin{proof}
Let $\hh^k=[\hh,\hh^{k-1}]$ be the lower central series of $\hh$ and $\delta$ a derivation, then 
\begin{equation}\label{dereq}
\delta\circ \ad_X=\ad_{\delta(X)}+\ad_X\circ \delta, \quad\text{ i.e., }\left[ \delta, \ad_X\right]=\ad_{\delta(X)}.
\end{equation}
This can be used to show inductively that $\delta$ preserves $\hh^k$ and  that
the image of $(\delta\circ \ad_X)^k$ is contained in $\hh^k$. Hence with $\hh$ nilpotent, the image of $(\delta\circ \ad_X)^k$ eventually becomes zero, which means that $(\delta\circ \ad_X)^k$ is a nilpotent linear map. As a consequence it is trace free.
\end{proof}
Then with $\hh$ being nilpotent we have that $K_\hh=0$ and the Lemma implies that $\eta=0$, and hence $\Ric= \tr(\delta^2)$ and $\Ric^2=0$. This implies $\B=0$.
\end{proof}

\subsection{Double extensions by simple Lie algebras}\label{desimplesec}
Here we will show that double extensions by simple Lie algebras cannot be conformally Einstein.
The key fact we are going to use in the following is the  algebraic version of the  {\em Karpelevich-Mostow Theorem}:
\begin{theorem}[{Karpelevich \cite{karpelevich53}, Mostow \cite{mostow55}, see also \cite[Corollary 1 in \textsection 6]{onishchik04}}]
\label{kmtheo}
Let $f:\ss\to\hat\ss$ be a homomorphism of real semisimple Lie algebras and let $\ss=\k\+\p$ be a Cartan decomposition of $\ss$. Then there is a Cartan decomposition of $\hat \ss=\hat \k+\hat\p $ such that  $f(\k)\subset \hat \k$ and $f(\p)\subset \hat \p$.
\end{theorem}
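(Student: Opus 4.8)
The plan is to restate the theorem in terms of Cartan \emph{involutions} and to produce the required involution on $\hat\ss$ by a convexity argument on the space of all Cartan involutions. If $\theta$ denotes the involution of $\ss$ whose $(\pm1)$-eigenspaces are $\k$ and $\p$, then a Cartan decomposition $\hat\ss=\hat\k\oplus\hat\p$ with $f(\k)\subset\hat\k$ and $f(\p)\subset\hat\p$ is exactly the same as a Cartan involution $\hat\theta$ of $\hat\ss$ with the intertwining property $\hat\theta\circ f=f\circ\theta$ (compare the $\pm1$-eigenspaces of both sides). So the task is to build such an $\hat\theta$.

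First I would reduce to the case where $f$ is injective. As $\ss$ is semisimple, $\ker f$ is an ideal, hence a sum of simple ideals, and a Cartan involution preserves every simple ideal: it cannot interchange two isomorphic simple factors, since otherwise the defining positive-definite form $-K_\ss(\,\cdot\,,\theta\,\cdot\,)$ would vanish on a nonzero vector of such a factor (two distinct simple ideals are $K_\ss$-orthogonal). Hence $\theta(\ker f)=\ker f$, and $\theta$ descends to a Cartan involution of $\ss/\ker f$; since the projection $\ss\twoheadrightarrow\ss/\ker f$ already intertwines the two involutions, it suffices to treat the induced embedding $\mathfrak b:=f(\ss)\hookrightarrow\hat\ss$, a semisimple subalgebra carrying the Cartan involution $\theta_{\mathfrak b}$ inherited from $\theta$. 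The problem is now: given a semisimple subalgebra $\mathfrak b\subseteq\hat\ss$ with Cartan involution $\theta_{\mathfrak b}$, find a Cartan involution $\hat\theta$ of $\hat\ss$ with $\hat\theta|_{\mathfrak b}=\theta_{\mathfrak b}$.

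I would split this into two reductions. First, it is enough to find a Cartan involution $\hat\theta$ that merely \emph{preserves} $\mathfrak b$, i.e.\ $\hat\theta(\mathfrak b)=\mathfrak b$: then $\mathfrak b=(\mathfrak b\cap\hat\k)\oplus(\mathfrak b\cap\hat\p)$, and because elements of $\hat\k$ act skew-symmetrically and elements of $\hat\p$ symmetrically with respect to the positive-definite form $B_{\hat\theta}(X,Y)=-K_{\hat\ss}(X,\hat\theta Y)$, the restriction to the invariant subalgebra $\mathfrak b$ shows that the Killing form $K_{\mathfrak b}$ is negative-definite on $\mathfrak b\cap\hat\k$ and positive-definite on $\mathfrak b\cap\hat\p$; thus $\hat\theta|_{\mathfrak b}$ is automatically a Cartan involution of $\mathfrak b$. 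Second, any two Cartan involutions of $\mathfrak b$ are conjugate by an element of $\mathrm{Int}(\mathfrak b)$, and every such inner automorphism is the restriction of an inner automorphism of $\hat\ss$; conjugating $\hat\theta$ by a suitable element of $\mathrm{Int}(\hat\ss)$ (which preserves the property $\hat\theta(\mathfrak b)=\mathfrak b$) then upgrades ``$\hat\theta|_{\mathfrak b}$ is \emph{a} Cartan involution'' to the exact equality $\hat\theta|_{\mathfrak b}=\theta_{\mathfrak b}$.

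It remains to produce a single Cartan involution of $\hat\ss$ preserving $\mathfrak b$, which is the heart of the matter. Fix any Cartan involution $\sigma$ of $\hat\ss$ with inner product $B_\sigma$; the set $\mathcal{X}=\{e^{\ad Z}\,\sigma\,e^{-\ad Z}\mid Z\in\hat\p_\sigma\}$ of all Cartan involutions is a Riemannian symmetric space of noncompact type, hence a Hadamard manifold. On $\mathcal{X}$ I would minimise a convex, invariantly defined ``size'' function attached to the inclusion $\mathfrak b\hookrightarrow(\hat\ss,B_{\hat\theta})$ (a Hilbert--Schmidt-type functional measured in the metric $B_{\hat\theta}$); such a function is geodesically convex, and the essential point is that semisimplicity of $\mathfrak b$ forces it to be proper modulo the centraliser of $\mathfrak b$, so that a minimiser $\hat\theta_\ast\in\mathcal{X}$ exists. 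The vanishing of the first variation of this functional at $\hat\theta_\ast$ along all geodesics, together with the fact that $\mathfrak b$ is a subalgebra, yields $\hat\theta_\ast(\mathfrak b)=\mathfrak b$. Equivalently, one may invoke the Bruhat--Tits/Cartan fixed-point theorem for the compact group $K_{\mathfrak b}\subset\mathrm{Int}(\hat\ss)$ generated by $\ad\k_{\mathfrak b}$, which already places $\k_{\mathfrak b}$ inside $\hat\k$, and then translate along the totally geodesic fixed-point set to absorb the noncompact part $\p_{\mathfrak b}$. I expect this last existence step --- controlling the behaviour at infinity through the semisimplicity of $\mathfrak b$ and extracting $\mathfrak b$-invariance from the critical-point equation --- to be the main obstacle; it is exactly here that ``$\mathfrak b$ semisimple'' (rather than merely reductive) is indispensable.
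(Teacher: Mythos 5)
The paper does not prove this statement at all: it is quoted as a classical theorem with references to Karpelevich, Mostow, and Onishchik (Corollary 1 in \S 6 of his lectures), so your attempt must be judged against the standard arguments rather than anything in the text. Your chain of reductions is correct and is essentially the classical one: the reformulation via an intertwining Cartan involution $\hat\theta\circ f=f\circ\theta$; the observation that a Cartan involution preserves each simple ideal (your positive-definiteness argument with $-K_\ss(\cdot,\theta\,\cdot)$ and the $K_\ss$-orthogonality of distinct simple ideals is right), which lets you factor out $\ker f$ and assume $f$ injective; the fact that a Cartan involution $\hat\theta$ of $\hat\ss$ with $\hat\theta(\bb)=\bb$ restricts to a Cartan involution of the semisimple subalgebra $\bb$ (here semisimplicity of $\bb$ is needed, exactly as you note, to rule out a kernel of $\ad_\bb$); and the final conjugation by an element of $\mathrm{Int}(\bb)$, extended to $\mathrm{Int}(\hat\ss)$, to match the given involution.

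The genuine gap is that after these reductions the \emph{entire content} of the theorem sits in the existence of a Cartan involution of $\hat\ss$ preserving $\bb$, and there your text is a pointer, not a proof --- as you acknowledge yourself. Concretely: (a) the ``size'' functional is never defined; in the standard implementation one realises $\bb$ as a point of a $\hat{\mathrm G}$-module (e.g.\ in $\Lambda^{\dim\bb}\hat\ss$, or via the structure tensor of the inclusion) and minimises its $\theta'$-norm, and then the existence of a minimiser is \emph{equivalent} to closedness of the $\mathrm{Int}(\hat\ss)$-orbit of $\bb$ --- that closedness (the orbit of a semisimple subalgebra is closed; Richardson--Slodowy in the real setting, or Mostow's original self-adjointness argument) is precisely where semisimplicity enters and is itself a nontrivial theorem, whereas your sentence ``semisimplicity of $\bb$ forces it to be proper modulo the centraliser'' merely asserts it. (b) The claim that vanishing of the first variation forces $\hat\theta_*(\bb)=\bb$ also requires an actual computation (the minimal-vector/Kempf--Ness argument that a critical point has $\theta$-stable stabiliser and normaliser), which is not performed. (c) The alternative fixed-point route is close to circular: to apply the Cartan fixed-point theorem to the group generated by $\exp(\ad\,\k_\bb)$ you must first know that this group has compact closure in $\mathrm{Int}(\hat\ss)$, i.e.\ that $f$ sends elliptic elements to elliptic elements --- but that is an essential part of the Karpelevich--Mostow statement itself (it can be established by a weight-lattice argument, since weights lie in the rational span of the roots and roots take purely imaginary values on elliptic elements, but you do not give it), and ``translate along the totally geodesic fixed-point set to absorb $\p_\bb$'' is not an argument: absorbing the noncompact part is exactly the difficulty the theorem resolves. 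So the skeleton is sound and the crux is correctly located, but the crux remains unproven.
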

Recall (e.g. from \cite[Chapter VI]{knapp96},  \cite[Chapter 4]{onishchik-vinberg3} or \cite{onishchik04}) that a {\em Cartan  decomposition} of a real semisimple Lie algebra $\ss$ is a decomposition $\ss=\k\+\p$ such that 
\bnum
\item
$\k$ is a subalgebra, $[\k,\p]\subset \p$ and $[\p,\p]\subset \k$, and
\item the Killing form of $\ss$ is negative definite on $\k$ and positive definite on $\p$, and $\k$ and $\p$ are orthogonal to each other. 
\enum
A Cartan subalgebra $\mathfrak{t}$ of $\ss$ is a subalgebra such that $\mathfrak{t}^\C$ is a Cartan subalgebra of $\ss^\C$ (see \cite[Sections 2.3.1 and 2.3.7] {cap-slovak-book1} or \cite{Knapp97}). Given a Cartan decomposition $\ss=\k\+\p$, there is a {\em stable Cartan subalgebra} $\mathfrak{t}\not=0$, i.e., a Cartan subalgebra such that $\mathfrak{t}=(\t\cap \k)\+(\t\cap \p)$. The dimension of a Cartan subalgebra is the {\em rank of $\ss$}. 
Using these facts, \Cref{kmtheo} implies the following statement that is useful for our purposes:
\begin{corollary}\label{killcor}
Let $\ss \subset \so(t,s)$ be a subalgebra in $\so(t,s)$ that is semisimple. Let $\ss=\k\+\p$ be a Cartan decomposition and $\t$ a stable Cartan subalgebra. Assume that the Killing form of $\ss$ 
satisfies
\[ K_\ss(X,Y)=\lambda \,
\tr (X\cdot Y),\quad\text{ for all $X,Y,Z\in \t$,}\]
with some $\lambda\in \rr$, where  $\tr$ denotes the trace form  in $\so(t,s)$. Then $\lambda>0$.
\end{corollary}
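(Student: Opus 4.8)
The plan is to read off the sign of $\lambda$ from the definiteness properties built into the notion of a Cartan decomposition, after using the Karpelevich--Mostow theorem to align the decomposition of $\ss$ with one of the ambient $\so(t,s)$.

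First I would record that on $\so(t,s)$ the Killing form is a \emph{positive} multiple of the trace form, namely $K_{\so(t,s)}(X,Y)=(t+s-2)\,\tr(XY)$. Since $\t\neq0$ the subalgebra $\ss$ is nonzero and semisimple, so $\dim\so(t,s)\geq\dim\ss\geq3$ and hence $t+s-2\geq1>0$. Because negative definiteness on the compact part and positive definiteness on the noncompact part are part of the very definition of a Cartan decomposition, it follows that for \emph{any} Cartan decomposition $\so(t,s)=\hat\k\oplus\hat\p$ the trace form is negative definite on $\hat\k$ and positive definite on $\hat\p$.

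Next I would apply \Cref{kmtheo} to the inclusion $\ss\hookrightarrow\so(t,s)$ and the given decomposition $\ss=\k\oplus\p$, obtaining a compatible Cartan decomposition $\so(t,s)=\hat\k\oplus\hat\p$ with $\k\subset\hat\k$ and $\p\subset\hat\p$. Since the stable Cartan subalgebra splits as $\t=(\t\cap\k)\oplus(\t\cap\p)$ and is nonzero, I may pick a nonzero $X$ lying wholly in one of the two summands. If $0\neq X\in\t\cap\k$, then $K_\ss(X,X)<0$ by the Cartan-decomposition property for $\ss$, while $X\in\k\subset\hat\k$ forces $\tr(X^2)<0$ by the previous paragraph; the hypothesis $K_\ss(X,X)=\lambda\,\tr(X^2)$ then exhibits $\lambda$ as a quotient of two negative numbers, so $\lambda>0$. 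If instead $0\neq X\in\t\cap\p$, both $K_\ss(X,X)$ and $\tr(X^2)$ are positive and again $\lambda>0$. Definiteness of the trace form on each summand guarantees $\tr(X^2)\neq0$, so the division is legitimate.

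The hard part is not the sign bookkeeping, which is entirely routine once the pieces are in place, but the structural input from \Cref{kmtheo}: everything hinges on being able to embed the given Cartan decomposition of $\ss$ into one of $\so(t,s)$, so that $\t\cap\k$ and $\t\cap\p$ land in the negative- and positive-definite loci of the trace form respectively. The first paragraph is what makes this robust, since it shows the relevant definiteness of the trace form holds for \emph{whichever} compatible decomposition the theorem supplies, the trace form differing from the Killing form of $\so(t,s)$ only by the positive scalar $t+s-2$.
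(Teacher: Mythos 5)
Your proof is correct and follows essentially the same route as the paper: apply the Karpelevich--Mostow theorem to align the Cartan decomposition of $\ss$ with one of $\so(t,s)$, use the proportionality $K_{\so(t,s)}=(t+s-2)\tr$, and read off the sign of $\lambda$ from the definiteness of the Killing forms on the $\k$- and $\p$-parts of a stable Cartan subalgebra. Your explicit check that $t+s-2>0$ is a small point the paper leaves implicit, but otherwise the arguments coincide.
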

\bprf
Let $\ss=\k\+\p$ be Cartan decomposition of $\ss$ and $\so(t,s)=\hat \k\+\hat\p $ the associated Cartan decomposition of $\so(t,s)$ with $\k=\hat\k\cap \ss$ and $\p=\hat\p\cap \ss$ and let $\t$ and $\hat\t$ two stable Cartan subalgebras of $\ss$ and $\so(t,s)$.
Then, from the above properties of Cartan decompositions and Killing forms, it follows that 
 both Killing forms $K_{\so(t,s)}$ and $K_\ss$ are negative definite on $\t\cap \k=\hat\t\cap \hat\k$ and positive definite on $\t\cap \p=\hat\t\cap \hat\p$. Using the assumption and the relation between 
the Killing form of $\so(t,s)$ and the trace form we get
\[K_\ss(X,Y)=\lambda \tr (X\cdot Y) =\tfrac{\lambda}{s+t-2}K_{\so (t,s)} (X,Y).\]
Taking $X,Y\in \t\cap \k$ or in $\t\cap \p$ implies that $\lambda>0$.
\eprf
Returning to double extensions by simple Lie algebras we get the following result:

\begin{proposition}\label{simplericprop}
Let $\gg=(\ss^*\+_\delta\hh)\rtimes_\delta\ss$ be a  double extension of a metric Lie algebra $\hh$ by a simple Lie algebra $\ss$. Then $\Ric|_{\ss\times \ss}\not=0$. In particular, $\gg$ cannot be Einstein.
\end{proposition}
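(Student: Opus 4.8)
The plan is to read off the restriction $\Ric|_{\ss\times\ss}$ directly from the Ricci formula \cref{ric} and to derive a contradiction from its vanishing by appealing to \Cref{killcor}. By \cref{ric}, for $S,T\in\ss$ we have
\[
\Ric(S,T)=-\tfrac14\bigl(2K_\ss(S,T)+\tr(\delta_S\circ\delta_T)\bigr),
\]
so the condition $\Ric|_{\ss\times\ss}=0$ is equivalent to the identity $K_\ss(S,T)=-\tfrac12\tr(\delta_S\circ\delta_T)$ holding for all $S,T\in\ss$. I would assume this identity and aim for a contradiction.

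Since $\ss$ is simple, its Killing form $K_\ss$ is nondegenerate and in particular nonzero; the displayed identity then forces $\tr(\delta_\bullet\circ\delta_\bullet)\not\equiv 0$, and hence $\delta\neq 0$. As $\ker\delta$ is an ideal of the simple algebra $\ss$, this makes $\delta$ injective, so $\delta$ embeds $\ss$ as a semisimple subalgebra $\delta(\ss)\subset\so(\hh)=\so(p,q)$, where $(p,q)$ is the signature of $\hh$. Under this embedding $\tr(\delta_S\circ\delta_T)$ is exactly the trace form of $\so(p,q)$ acting on its standard representation $\hh$, while the intrinsic Killing form $K_\ss$ transports to the Killing form of $\delta(\ss)$. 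Thus, on $\delta(\ss)$, the Killing form equals $-\tfrac12$ times the trace form of $\so(p,q)$.

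Next I would fix a Cartan decomposition $\ss=\k\+\p$ together with a stable Cartan subalgebra $\t$, and apply \Cref{killcor} to the semisimple subalgebra $\delta(\ss)\subset\so(p,q)$ with $\lambda=-\tfrac12$: the corollary concludes $\lambda>0$, contradicting $\lambda=-\tfrac12<0$. Hence $\Ric|_{\ss\times\ss}\neq 0$. In particular $\Ric\neq 0$, so $\gg$ is not Ricci-flat, and by \Cref{theodextric}, which identifies the Einstein and Ricci-flat conditions for double extensions, the algebra $\gg$ cannot be Einstein.

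The substantive step is this reduction to \Cref{killcor}; the only point requiring care is to match the trace form $\tr(\delta_S\circ\delta_T)$ appearing in \cref{ric} with the trace form in $\so(p,q)$ used in \Cref{killcor}, which is immediate once one identifies $\ss$ with $\delta(\ss)$ and $\hh$ with the standard $\so(p,q)$-module. I do not expect a genuine obstacle here: the negative sign $\lambda=-\tfrac12$ is built into the Ricci formula through the term $2K_\ss$, and it is exactly this sign that clashes with the positivity forced by the compatibility of Cartan decompositions in the Karpelevich--Mostow theorem.
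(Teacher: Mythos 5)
Your proposal is correct and follows essentially the same route as the paper: both read off $\Ric|_{\ss\times\ss}$ from \cref{ric} and derive a contradiction from its vanishing via \Cref{killcor} (with $\lambda=-\tfrac12$), then invoke \Cref{theodextric} to exclude the Einstein case. The only cosmetic difference is that you rule out $\delta=0$ via nondegeneracy of $K_\ss$ and injectivity of $\delta$, where the paper phrases the same dichotomy as "$\delta(\ss)$ trivial or simple."
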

\begin{proof}
Since $\ss$ is simple, $\delta(\ss)\subset \so(t,s)$ is either trivial or simple. 
From the computation of the Ricci tensor in (\ref{ric}) we have seen that \[-4 \Ric(S,T)=2 K_\ss (S,T) +\tr (\delta_S\circ \delta_T),\quad\text{ for all }S,T\in \ss.\]
Assuming that this vanishes for all $S,T\in \ss$ gives a contradiction: if $\delta(\ss)$ trivial, then $K_\ss=0$, which contradicts  the simplicity of $\ss$, and if $\delta(\ss)$ is simple it is in contradiction with  \Cref{killcor} applied to $\delta(\ss)\subset \so(t,s)$. 

 Finally, assume that the double extension $\gg$ is Einstein. Then by 
 \Cref{theodextric} $\gg$  is Ricci-flat which contradicts $\Ric|_{\ss\times \ss}\not=0$.
  \end{proof}
A version of this  Theorem in the case that $\hh$ is abelian was  stated in \cite[Theorem 4.1]{baum-kath03}.

Next, in order to analyse the second conformal to Einstein condition for double extensions by a simple Lie algebra $\s$, 
we describe ideals in such double extensions. 
For this we use the two projections $p$ and $q$ in the exact sequences (\ref{exactg1}) and (\ref{exactg}). 
The projection $q$ is simply given by $q(\sigma, X, S)=S\in \ss$.   
  \begin{lemma}\label{slemma}
  Let  $(\gg=(\ss^*\oplus_\delta \hh) \rtimes_\delta \ss,\la.,.\ra_{\gg,\b})$ be a metric double extension by a simple Lie algebra $\ss$, and let $\n$ be an ideal in $\gg$. Then $\n$ contains $\ss^*$ or $\n\subset \ss^*\+_\delta \hh$.
  \end{lemma}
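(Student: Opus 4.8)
The plan is to exploit the projection $q$ from the exact sequence (\ref{exactg}). First I would recall that $q(\sigma,X,S)=S$ is a surjective homomorphism of Lie algebras whose kernel is $\ker q=\ss^*\+_\delta\hh=(\ss^*)^\perp$, so that $q(\n)$ is an ideal of $\ss$ (the image of an ideal under a surjective homomorphism). Since $\ss$ is simple, $q(\n)$ is either $\{0\}$ or all of $\ss$, and this dichotomy produces exactly the two alternatives in the statement.

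If $q(\n)=\{0\}$, then $\n\subset\ker q=\ss^*\+_\delta\hh$, which is the second alternative, so nothing further is required.

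The substantive case is $q(\n)=\ss$, where I must show $\ss^*\subset\n$. The idea is to manufacture elements of $\ss^*$ inside $\n$ by bracketing against $\ss^*$. Concretely, for each $S\in\ss$ choose $n_S=(\sigma_S,X_S,S)\in\n$, which exists because $q$ maps $\n$ onto $\ss$; then for any $\tau\in\ss^*$ the bracket formula (\ref{debracket}) gives $[n_S,(\tau,0,0)]=(\ad_S^*\tau,0,0)$, an element of $\n\cap\ss^*$ (since $\ss^*$ is an ideal and $\n$ is an ideal). Letting $S$ and $\tau$ vary shows that the $\ss$-submodule $\ad_\ss^*(\ss^*)=\mathrm{span}\{\ad_S^*\tau\}$ is contained in $\n\cap\ss^*$.

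The crux is then to conclude that this submodule exhausts $\ss^*$. This is where simplicity is used a second time: the adjoint representation of a simple Lie algebra is irreducible and nontrivial, hence so is its dual, the coadjoint representation of $\ss$ on $\ss^*$. Consequently the submodule $\ad_\ss^*(\ss^*)$ is either $\{0\}$ or all of $\ss^*$, and it is nonzero precisely because $\ss$ is nonabelian; therefore it equals $\ss^*$, giving $\ss^*\subset\n\cap\ss^*\subset\n$ and finishing this case. I expect the irreducibility of the coadjoint module to be the conceptual heart of the argument, while the bracket identity $[n_S,(\tau,0,0)]=(\ad_S^*\tau,0,0)$---whose $\hh$- and $\ss$-components vanish because the second entry lies in $\ss^*$---is a routine verification against (\ref{debracket}).
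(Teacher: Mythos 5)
Your proof is correct and follows essentially the same route as the paper: project onto $\ss$ via $q$, use simplicity to get the dichotomy, and in the case $q(\n)=\ss$ bracket elements of $\n$ against $\ss^*$ to produce $\ad_S^*\tau\in\n\cap\ss^*$. The only cosmetic difference is the last step, where you invoke irreducibility of the coadjoint module to see these elements span $\ss^*$, while the paper appeals to $\ss=[\ss,\ss]$ (equivalently, triviality of the centre); both are immediate consequences of simplicity.
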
 
  \bprf
  Let $\n$ be an   ideal in $\gg$. Then, as the projection $q$ in the sequence (\ref{exactg})  is surjective, $q(\n)\subset \gg/(\ss^*\+\hh)\simeq\ss$ is an ideal in $\ss$. Since $\ss$ is simple, this implies that $q(\n)$ is either trivial, in which case $\n\subset \ss^*\+_\delta \hh$ and the lemma is proven, or isomorphic to $\ss$. In the latter case, for an arbitrary element in $\eta = (\sigma, X,S)\in \n$ we get from \cref{debracket} that
  \[
[\eta,\hat\sigma]=\ad^*_S(\hat\sigma)=\hat \sigma \circ [S,.] \in \ss^*\cap \nn,\quad \text{for all $\hat \sigma\in \ss^*\subset \gg$.}\]
Since $\ss$ is simple, it is $\ss=[\ss,\ss]$, and hence this shows that $\ss^*\subset \nn$. \eprf
\begin{lemma}\label{snulllemma}
Let  $(\gg=(\ss^*\oplus_\delta \hh) \rtimes_\delta \ss,\la.,.\ra_{\gg,\b})$ be a metric double extension of $\hh$ by a simple Lie algebra $\ss$. Assume that $\gg$ has vanishing scalar curvature, so the Killing form of $\hh$ is trace free, and let $\n$ be the Weyl nullity ideal. If   $\n$ contains   $ \ss^*$, then 
$\ss=\sl_2\rr$ or $\ss=\so(3)$ and 
\begin{equation}
\label{rk1conditions}
\hh\hook \Ric=0,\text{ and }\quad  \tr(\delta_Y\circ\delta_Z)=\frac{\dim(\hh)}{2}K_\ss(Y,Z),\ \text{ for all }Y,Z\in \ss.
\end{equation}
\end{lemma}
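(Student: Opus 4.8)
The plan is to exploit the hypothesis $\ss^*\subset\n$ directly: it says precisely that $\sigma\hook\W=0$ for every $\sigma\in\ss^*$. First I would simplify the Weyl tensor when one slot is filled by $\sigma\in\ss^*$. Vanishing scalar curvature gives $\ro=\tfrac{1}{n-2}\Ric$, and since $\ss^*\hook\Ric=0$ this yields $\sigma\hook\ro=0$; substituting into the Weyl formula and dropping the two terms containing $\ro_{\,\cdot\,\sigma}$ reduces the condition $\sigma\hook\W=0$ to the identity
\[\R(\sigma,B,C,D)=\g(\sigma,D)\,\ro(C,B)-\g(\sigma,C)\,\ro(D,B)\]
for all $B,C,D\in\gg$. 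I would then read off the pairings from \Cref{dedef} ($\ss^*$ is null and pairs only with $\ss$, while $\hh$ pairs only with itself) and evaluate this identity block by block.

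Next I would extract the first conclusion $\hh\hook\Ric=0$. From \cref{debracket} one checks $[\sigma,X]=0$ for $X\in\hh$, so $\R(\sigma,B,\cdot,\cdot)=0$ whenever $B\in\hh$. Plugging $(B,C,D)=(Y,X,\hat S)$ with $X,Y\in\hh$, $\hat S\in\ss$ into the reduced identity, the left side vanishes and the right side is $-\tfrac{\sigma(\hat S)}{4(n-2)}K_\hh(X,Y)$, forcing $K_\hh=0$, i.e. $\Ric|_{\hh\times\hh}=0$ (in fact strictly stronger than the trace-free hypothesis). Taking instead $(B,C,D)=(Y,\hat T,\hat S)$ with $Y\in\hh$ and $\hat T,\hat S\in\ss$, the left side again vanishes and the right side becomes a multiple of $\sigma(\hat S)\,\eta(Y,\hat T)-\sigma(\hat T)\,\eta(Y,\hat S)$; since $\dim\ss\ge 3$ I can choose $\hat T$ independent of $\hat S$ and $\sigma$ separating them to isolate $\eta(Y,\hat S)=0$. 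Hence $\eta=0$, so $\Ric(\hh,\ss)=0$, and together with $\ss^*\hook\Ric=0$ this gives $\hh\hook\Ric=0$.

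The heart of the argument is the all-$\ss$ evaluation $(B,C,D)=(\hat T,\hat U,\hat S)$. Using \cref{debracket} twice one computes $[\,[\sigma,\hat T],\hat U\,]=\sigma\circ\ad_{\hat T}\circ\ad_{\hat U}\in\ss^*$, whence $\R(\sigma,\hat T,\hat U,\hat S)=-\tfrac14\,\sigma([\hat T,[\hat U,\hat S]])$. Because $\sigma\in\ss^*$ is arbitrary and the pairing $\ss^*\times\ss$ is nondegenerate, the reduced identity collapses to the structure equation
\[[\hat T,[\hat U,\hat S]]=\beta(\hat S,\hat T)\,\hat U-\beta(\hat U,\hat T)\,\hat S,\qquad \beta:=4\,\ro|_{\ss\times\ss}\]
on the simple Lie algebra $\ss$. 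This ``constant-curvature'' relation is the key object. Taking the trace of both sides as operators in $\hat S$ identifies $K_\ss(\hat T,\hat U)=-(\dim\ss-1)\,\beta(\hat U,\hat T)$, so $\beta=-\tfrac{1}{\dim\ss-1}K_\ss$ is a multiple of the Killing form.

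Finally I would force rank one. Passing to a stable Cartan subalgebra $\t$ (as recalled before \Cref{killcor}), if $\ss$ had rank $\ge 2$ I could take linearly independent commuting $\hat U,\hat S\in\t$; the left side of the structure equation then vanishes while the right side, being $-\tfrac{1}{\dim\ss-1}\big(K_\ss(\hat S,\hat T)\hat U-K_\ss(\hat U,\hat T)\hat S\big)$, is nonzero for some $\hat T$ by nondegeneracy of $K_\ss$ --- a contradiction. Hence $\ss$ has rank one, so $\ss^\C=\sl_2\C$ and $\ss=\sl_2\rr$ or $\ss=\so(3)$, both of dimension $3$. Substituting $\dim\ss=3$ into $\beta=-\tfrac12 K_\ss$ and comparing with $\beta=-\tfrac{1}{n-2}\big(2K_\ss+\tr(\delta^2)\big)$ from \cref{ric}, together with $n=\dim\hh+2\dim\ss=\dim\hh+6$, yields $\tr(\delta_Y\circ\delta_Z)=\tfrac{\dim\hh}{2}K_\ss(Y,Z)$, the remaining part of \cref{rk1conditions}. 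The main obstacle is obtaining the clean structure equation on $\ss$ from the Weyl nullity condition and recognising it as a constant-curvature identity; once that is in hand, the trace computation and the Cartan-subalgebra argument are routine.
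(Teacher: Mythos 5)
Your proof is correct, and its skeleton matches the paper's: both evaluate the Weyl nullity condition $\sigma\hook\W=0$ for $\sigma\in\ss^*$ on triples from $\ss$, use commuting Cartan elements to kill the bracket term in rank $\ge 2$, and in rank one feed the identity $[[X,Y],Z]=-\tfrac12(K_\ss(X,Z)Y-K_\ss(Y,Z)X)$ back into the formula $\Ric|_{\ss\times\ss}=-\tfrac14(2K_\ss+\tr(\delta^2))$ to extract \cref{rk1conditions}. The genuine difference is in how the rank $\ge 2$ case is refuted. The paper isolates the condition $2K_\ss(Y,Z)+\tr(\delta_Y\circ\delta_Z)=0$ on a \emph{stable} Cartan subalgebra and then needs \Cref{killcor}, i.e.\ the Karpelevich--Mostow theorem, to see that $K_\ss$ cannot be a negative multiple of the trace form of $\delta(\ss)\subset\so(\hh)$. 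You instead first trace your structure equation $[\hat T,[\hat U,\hat S]]=\beta(\hat S,\hat T)\hat U-\beta(\hat U,\hat T)\hat S$ over $\hat S$ to identify $\beta=-\tfrac{1}{\dim\ss-1}K_\ss$, after which commuting independent $\hat U,\hat S$ give a contradiction using only the nondegeneracy of $K_\ss$ on $\ss$ (you do not even need the Cartan subalgebra to be stable, nor \Cref{killcor} at all). This is a real simplification of that step, bought at the price of the extra trace computation; it also packages the rank-one conclusion automatically, since $\dim\ss=3$ immediately gives $\beta=-\tfrac12K_\ss$ without separately invoking \cref{rk1}. Your derivation of $\hh\hook\Ric=0$ (via $K_\hh=0$ and $\eta=0$ from the mixed slots) is the same observation the paper compresses into its final sentence, just written out; note you actually obtain $K_\hh=0$, strictly stronger than the trace-free hypothesis, which is consistent with the paper's surrounding use of $\Ric^2=0$.
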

\begin{proof}
Assume that $\ss^*\subset \n$. Hence we have the condition $\sigma\hook\W=0$ for every $\sigma\in \ss^*$. Evaluating this for $X,Y,Z\in \ss$, and assuming that the scalar curvature of $\gg$ vanishes, we get
\begin{eqnarray*}
0&=&- 4 \W(X,Y,Z,\sigma)\\
&=&
\sigma([[X,Y],Z])+
\\
&& + \tfrac{1}{n-2} \left( \sigma(Y)\left(  2K_\ss(X,Z)+\tr(\delta_X\circ \delta_Z)  
\right)
- \tfrac{1}{n-2}\sigma(X)\left(2 K_\ss(Y,Z)+\tr(\delta_Y\circ \delta_Z) 
\right)\right),
\end{eqnarray*}
where $K_\ss$ is Killing form  of $\ss$. 
If the rank of $\ss$ is not $1$, i.e., if the dimension of a Cartan subalgebra $\t$ is greater than $1$,    we take linearly independent $X,Y$ in  $\t$ and get that 
 $0=\W(X,Y,Z,\sigma)$ yields
 \[ 2K_\ss(Y,Z)+\tr(\delta^2)(Y,Z)=0,\quad\text{ for all $Y,Z\in \t$.}\] If $\t$ is a stable Cartan subalgebra we can use \Cref{killcor} and the same argument as in the proof of \Cref{simplericprop} leads to a contradiction.

In the rank $1$ case, we have $\ss=\so(3)$ or $\ss=\sl_2\rr$. The result is then a direct computation using that for these $\ss$ we have
\begin{equation}\label{rk1}[[X,Y],Z]= -\tfrac{1}{2} \left(K_\ss(X,Z)Y-K_\ss(Y,Z)X\right).\end{equation}
Moreover, $\W(H,Y,Z,\sigma)=0$ yields $\Ric(H,Z)=0$ for all $H\in \hh$ and $Z\in \gg$.
\end{proof}

These lemmas enable us to prove \Cref{desimpletheo} in the introduction, which states that double extensions by simple Lie algebras cannot be conformally Einstein unless $\ss=\so(3)$ or $\ss=\sl_2\rr$..

\begin{proof}[{\bf Proof of \Cref{desimpletheo}}]
Let $\hh$ be a metric Lie algebra and $\gg=(\ss^*\+_\delta \hh)\rtimes_\delta\ss$ a double extension with $\ss$ simple.  
If $\gg$ is conformally Einstein, by \Cref{bachtheo}, the 
first condition $\Ric^2=0$ is satisfied, which implies that the scalar curvature vanishes. Moreover, 
by \Cref{simplericprop},  $\gg$ is  not Einstein, so we can use the vanishing of the second obstruction in \Cref{obsinv}. Let $\n$ be the non trivial Weyl nullity ideal. 

We can assume  that the rank of $\ss$ is at least $2$. Then by \Cref{slemma,snulllemma} we have that $\ss^*\not\subset\n$ and hence $\n\subset \ss^*\+_\delta\hh$. 
We consider the projection \[\pr_\hh:\ss^*\+_\delta\hh\ni(\sigma,H)\mapsto H\in  \hh,\] which is a Lie algebra homomorphism, 
and we denote
$ \n_0:=\pr_\hh(\n)$.
Since $\n$ is an ideal for each $S\in \ss$  and $\sigma+ H\in 
\n$ we have 
$[\sigma+H, S]=-\ad_S^*(\sigma)-\delta_S(H)\in \n$, and hence  that $\n_0$ is invariant under all derivations in the image of $\delta$, i.e., under $\delta(\ss)$. 

Using the vanishing of the scalar curvature again, we will 
evaluate the condition~\ref{Aobs}:
For a nonzero element $N=\sigma+ H\in 
\n$,   and any $X,Y, Z\in \gg$ we get
\begin{eqnarray*}
0&=&\W(X,Y,Z,N)\ = \  \W(X,Y,Z,\sigma)+ \W(X,Y,Z,H)
\\
&=&
-\tfrac{1}{4}\la [[X,Y],Z],H\ra -\tfrac{1}{4}\la [[X,Y],Z],\sigma\ra  +\tfrac{1}{n-2}
\left(  \la Y,\sigma \ra \Ric(X,Z)- \la X,\sigma \ra \Ric(Y,Z) \right)
\\
&&{}+\tfrac{1}{n-2}
\left(
\la X,Z\ra \Ric(Y,H)- \la Y,Z\ra \Ric(X,H)+ \la Y,H \ra \Ric( X,Z)- \la X,H \ra \Ric( Y,Z)\right).
\end{eqnarray*}
By setting $Y=\hat \s\in \ss^*$ we get
\[
0=\W(X,\hat\s,Z,N)
=
-\tfrac{1}{n-2} \la\hat\s, Z\ra  \Ric (X,H),
\]
for all $X,Z\in \gg$ and hence we get  $ (\n_0)\hook \Ric=0$.  The above equation for the Weyl nullity ideal  then simplifies to 
\begin{equation}\label{wneq}
\begin{array}{rcl}
0
&=&
-\tfrac{n-2}{4}\la [[X,Y],Z],H\ra  -\tfrac{n-2}{4}\la [[X,Y],Z],\sigma \ra 
\\[2mm]&&{}
+
\la Y,\sigma \ra \Ric(X,Z)-
 \la X,\sigma \ra \Ric(Y,Z)
+ 
\la Y,H \ra \Ric( X,Z)- \la X,H \ra \Ric( Y,Z),
 \end{array}
 \end{equation}
for all $X,Y,Z\in \gg$. If $X,Y,Z \in \ss$, then as $\ss$ is a subalgebra and  $\ss\perp \hh$,   this implies the same equation as in the proof of \Cref{snulllemma},
\[
0
=
\sigma(  [[X,Y],Z])
+ \tfrac{1}{n-2}\left(  
\s (Y) \left( 2 K_\ss (X,Z) +\tr (\delta_X\circ \delta_Z)\right)  -
\s( X)\left(  
 2 K_\ss (Y,Z) +\tr (\delta_Y\circ \delta_Z) \right) 
\right),
\]
for all $X,Y,Z\in \ss$ and for $\sigma\in \pr_{\ss^*}(\n)$, where $ \pr_{\ss^*}:\ss^*\+_\delta\hh\to \ss^*$ is the projection onto $\ss^*$. If the projection of $\n$ onto $\ss^*$ is equal to $\ss^*$, we get a contradiction in the same way as in the proofs of \Cref{snulllemma} and \Cref{simplericprop}, so we may assume that  $ \pr_{\ss^*}(\n)\not= \ss^*$.

Now we consider  $\n^\perp$, the ideal in $\gg$ that is orthogonal to $\n$ with respect to $\la.,.\ra $. 
As we have $ \pr_{\ss^*}(\n)\not= \ss^*$,  the ideal $\n^\perp$ contains a non-trivial subspace $\ss\cap \n^\perp$. Since $\ss$ is a subalgebra in $\gg$, the subspace  $\ss\cap \n^\perp$ is in fact an ideal in $\ss$ and hence equal to $\ss$, because of the simplicity of $\ss$. Now, since $\hh\perp \ss$, the inclusions $\ss\subset \n^\perp$ and $\n\subset \ss^*\+_\delta\hh$ imply that $\n\subset \hh$. But since $\n$ is an ideal, the bracket relation~(\ref{debracket}) then implies that $\delta_S(H)=0$ for all $S\in \ss$ and $H\in \n_0=\n$. 
With this information, and with $\sigma=0$, \cref{wneq}  becomes
\begin{eqnarray*}
0
&=&
-\tfrac{n-2}{4}\la [[S, Y],T],H\ra  
 + \la Y,H \ra_\hh \, \Ric( S,T)
\\
&=& \tfrac{n-2}{4}\la Y, \delta_S(\delta_T(H)) \ra  
 + \la Y,H \ra_\hh \, \Ric( S,T)
\\
&=&
\la Y,H \ra_\hh \, \Ric( S,T),
\end{eqnarray*}
for all $Y\in \hh$ and $S,T\in \ss$. Since $\la.,.\ra_\hh$ is non-degenerate, this  implies that $\Ric( S,T)=0$ for all $S,T\in \ss$, which leads again to a contradiction as in the proof \Cref{simplericprop}.
\end{proof}

The following example shows that in the case when the rank of $\ss$ is $1$ (see \Cref{snulllemma}), the obstructions can vanish without the metric being conformally Einstein.
\begin{example}\label{so3ex} We will now present an example that is not governed by \Cref{desimpletheo}. Here, $\ss$ is of rank $1$, and we describe a double extension for which both obstructions vanish, i.e., which is Bach flat and has a nontrivial Weyl nullity ideal, but which however {\em is not conformally Einstein}.

Let $\ss=\so(3)$, $\hh=\rr^3$ be abelian with the Euclidean standard inner product, and $\delta=\frac{\sqrt{6}}{2}\,\mathrm{Id}_{\so(3)}$ a map from $\ss$ to the derivations of the abelian Lie algebra $\rr^3$ in $\so(3)$. That is, $\delta_S=\frac{\sqrt{6}}{2}\,S$ for all $S\in \so(3)$. Let $\gg=(\ss^*\+_\delta\rr^3)\rtimes_\delta \ss$ be the double extension  of $\hh=\rr^3$ by $\ss=\so(3)$ and $\delta$ with the inner product $\la.,.\ra_0$, i.e., with $\b=0$.
Let $E_i\in \rr^3$, $i=1,2,3$, be the Cartesian standard basis of $\rr^3$, and 
$S_i\in \ss$, $i=1,2,3$, be a basis of $\so(3)$ such that
\[ [S_i,S_j]=S_k, \quad S_i(E_j)=E_k,\]
where $(i,j,k)$ is an even permutation of $(1,2,3)$. Moreover,  let $\sigma^i\in \ss^*$ be the  dual basis to $S_i$, so $\s^i(S_j)=\delta^i_{~j}$. Then $(\sigma^1, \sigma^2, \sigma^3, E_1,E_2, E_2 , S_1, S_2, S_3)$ forms a basis of $\gg$. The non-vanishing Lie brackets in $\gg$ are
\belabel{example-brackets}
\begin{array}{rcccl}
&&\left[S_i,S_j \right] &= &S_k,
\\
&&\left[ E_i,E_j \right] &=&\tfrac{\sqrt{6}}{2}\sigma^k,
\\
\left[\sigma^i,S_j\right]  &=&\left[S_i,\sigma^j\right] &=& \sigma^k,
\\
\left[S_i,E_j\right]  &=& \left[E_i,S_j\right] &=&\tfrac{\sqrt{6}}{2}E_k,
\end{array}
\end{equation}
where $(i,j,k)$ again is an even permutation of $(1,2,3)$. 

We are now going to show that for the simply connected metric Lie group \[G=(\rr^3_{\text{abel}}\times_\delta \rr^3_{\text{eucl}})\rtimes_\delta \mathbf{SU}(2),\]  both obstructions vanish but that  $\gg$ as metric Lie algebra is not locally conformally Einstein, admitting no solution to \cref{cepde1}.

First we notice that the Schouten tensor of $G$ is given by
\begin{equation}
\label{so3schouten}\ro=\tfrac{1}{4}\,  \delta_{ij}\sigma^i\sigma^j =\tfrac{1}{2} \, \mathring{ \ro }= \tfrac{1}{4} \, \mathring{ \g }
\end{equation}
where $\mathring{ \g }$ and $\mathring{ \ro }$ are the round metric and its Schouten tensor on $\S^3\simeq \mathbf{SU}(2)$. In particular, $G$ is scalar flat and Bach flat so the first obstruction vanishes.
Also the second obstruction vanishes, since 
a direct computation as in the proof of \Cref{slemma} using \cref{rk1} shows that $\ss^*$ is contained in the Weyl nullity ideal, $\ss^*\subset \nn$. 

In regards to  \Cref{cepde1}, we are going to show that in fact $\n=\ss^*$. Since $\ss^*\subset \nn$, it is enough to show that there is no nontrivial $V+S\in \rr^3\+\ss$ that annihilates the Weyl tensor. For such $V+S$ and  all $X,Y,Z\in \gg$ the Weyl tensor is
\begin{eqnarray*}
0&=&
4\,\W(X,Y,Z,V+S)\\
&=&
\langle \left[ \left[X,Y\right],Z\right],V+S\rangle
-\tfrac{1}{7}\left( \la X,Z\ra K_\gg (Y,V+S) -  \la Y,Z\ra K_\gg (X,V+S)\right)
\\
&& -\tfrac{1}{7}\left(\la Y,V+S\ra K_\gg(X,Z)-\la X,V+S\ra K_\gg(Y,Z)\right).
\end{eqnarray*}
Now choosing $X=S_i$ and $Y=S_j$ and $Z\in \rr^3$, and recalling that 
$\left[S_k,Z\right]\in \rr^3$ and hence orthogonal to $\ss$,
leads to the condition, 
\[0=\la \left[S_k,Z\right],V+S\ra= \la \left[S_k,Z\right],V\ra,
\]
where $(i,j,k)$ is an even permutation of $(1,2,3)$. But we also have that $\left[\ss,\rr^3\right]=\rr^3$, which implies $V=0$ as $\la.,.\ra$ is non-degenerate on $\rr^3$. Finally, choosing $X=E_i$ and $Y=E_j\in \rr^3$ and $Z=S_l\in \ss$ we have
\[0=\la[\sigma^k,S_l],S\ra,\]
where $(i,j,k)$ is an even permutation of $(1,2,3)$. Since $ [\ss^*,\ss]=\ss^*$, this implies $S=0$, and we can conclude that $\n=\ss^*$.

Next we assume that there is a rescaling function $\vf\in C^\infty(G)$ such that $\mathrm{e}^{2\vf}  \g$ is Einstein. Since  $\n=\ss^*$, the gradient $\nabla \vf$ of $\vf$ is tangent to $\n=\ss^*$, i.e., 
\[
\nabla \vf= a_i\sigma^i ,\]
for some functions $a_i\in C^\infty(G)$, and
where $\s^i\in \ss^*\subset \Gamma(TG)$. Hence, for the differential of $\vf$ we have
\[\d\vf =g (\nabla \vf,.)=a_i\la \sigma^i,.\ra= a_i\sigma^i ,\]
where now $\sigma^i\in \ss^*\subset \Gamma (T^*\mathbf{SU}(2))$.
Note that the $\s^i$'s here are understood as sections of  $T^*G$ as well as vectors in $\ss^*\subset \Gamma(TG)$, but also as $1$-forms on $\S^3$. 
 Then $0=\d^2\vf=\d a_i\wedge \sigma^i +a_i\d \sigma^i$ shows that
\[\d a_i|_{T(\rr^3_{\text{abel}}\times_\delta \rr^3_{\text{eucl}})}   =0.\]
This implies that the $a_i$'s
are actually smooth functions on $\S^3=\mathbf{SU}(2)$ only.
Hence  $\vf\in C^\infty(\S^3)$, with  $\Upsilon=\d\vf =a_i \sigma^i$, for $a_i\in C^\infty(\S^3)$, is a solution to  \cref{cepde1}, with the function $\lambda$ to be given by
\[\lambda=\tfrac{1}{9}\left(\mathsf{J}-\mathrm{div}(\Upsilon)+\g(\Upsilon,\Upsilon)\right)
=
-\tfrac{1}{9}\mathrm{div}(\Upsilon)=0
.\]
So $\Upsilon$ must satisfy the following equation,
\belabel{ex-eq}
\ro -\nabla \Upsilon +\Upsilon^2 = 0.\end{equation}
Note  that, since $\g$ is not Einstein, $\Upsilon=\d\vf\not=0$. 

We obtain the covariant derivatives of  the $\sigma^i$'s, which are  understood as vector fields on $G$ that are elements in the abelian ideal $\ss^*$ of $\gg$, from the bracket relations (\ref{example-brackets}) as
\[\begin{array}{rclrclrcl}
\nabla_{S_1} \sigma^2&=&\tfrac{1}{2}\sigma^3,&\nabla_{S_2} \sigma^3&=&\tfrac{1}{2}\sigma^1,&
\nabla_{S_3} \sigma^1&=&\tfrac{1}{2}\sigma^2, 
\\
\nabla_{S_2} \sigma^1&=&-\tfrac{1}{2}\sigma^3,& \ \nabla_{S_1} \sigma^3&=&-\tfrac{1}{2}\sigma^2,&
\nabla_{S_3} \sigma^2&=&-\tfrac{1}{2}\sigma^1,
\end{array}
\]
and all other covariant derivatives of $\s^i$ being zero. More concisely for $\nabla\s^i\in \Gamma(TG\otimes T^*G)$, we have
\[\nabla\s^i|_{T(  \rr^3_{\text{abel}}\times_\delta \rr^3_{\text{eucl}} ) }=0\] and 
\begin{equation}
\label{so3lc}
\begin{array}{rcccl}
\nabla \sigma^i|_{T\S^3}&=& \sigma^j\wedge  \s^k& =&\mathring{\nabla} \s^i,
\end{array}\end{equation}
where $\mathring{\nabla}$ is the Levi-Civita connection of the round $3$-sphere and $(i,j,k)$ is an even permutation of $(1,2,3)$. Hence, with \Cref{so3schouten}, our crucial \Cref{ex-eq} becomes
\belabel{ex-eq3}
 \mathring{\nabla} \Upsilon -\Upsilon^2 = \tfrac{1}{4}\mathring{\g},\end{equation}
 which is  an equation only on $\S^3$.
 With $\mathring{\ro}=\tfrac{1}{2}\mathring{\g}$, this equation can be rewritten to a version of the conformally Einstein equation (\ref{cepde1}),
\belabel{ex-eq2}
\mathring{\ro} -\mathring{\nabla} \Upsilon +\Upsilon^2 = \tfrac{1}{4}\mathring{\g},
\end{equation}
on $\S^3$ with $\lambda=\frac{1}{4}$.
This 
 implies that $\vf$ is in fact a local rescaling of the round metric $\mathring{\g}$ \ to another Einstein metric $\hat \g=\mathrm{e}^{2\vf}\mathring{\g}$ on $\S^3$. The round metric on the sphere however is locally conformally flat, and whence the metric $\hat \g$ is a locally conformally flat Einstein metric.  Consequently, $\hat \g$ is a metric of constant curvature $\kappa$ with Schouten tensor $\hat \ro=\frac{\kappa}{2}\hat\g= \frac{\kappa\mathrm{e}^{2\vf}}{2} \mathring{\g}$.
 The transformation of the Schouten tensor in \Cref{rotrafo} then yields
\[ \tfrac{\kappa}{2}\mathrm{e}^{2\vf} \mathring{\g}=\tfrac{1}{2}\left( 1- \mathring{\g}(\Upsilon,\Upsilon)\right)\mathring{\g}-\mathring{\nabla}\Upsilon +\Upsilon^2.\]
This together with \Cref{ex-eq3}
implies that 
\[ \mathring{\g}(\Upsilon,\Upsilon)= \tfrac{1}{2}- 
\kappa \mathrm{e}^{2\vf}.\]
 Then, if  $V$  is the metric dual of $\Upsilon=\d \vf$, i.e.~$V$ is the  the gradient  of $\vf$,  we  have 
\[ X(\g(V,V)) = X\left(\tfrac{1}{2}- 
\kappa \mathrm{e}^{2\vf}\right)=
-2\kappa \mathrm{e}^{2\vf} \Upsilon (X),\quad \text{ for all tangent vectors }X.
\] 
On the other hand it holds that
 \[X(\mathring{\g}(V,V))=2\g(\mathring{\nabla}_XV,V)=2\mathring{\g}(\mathring{\nabla}_VV,X),\quad \text{ for all tangent vectors }X,\]
 and therefore that
 \[\nabla_V\Upsilon = - \kappa \mathrm{e}^{2\vf} \Upsilon.\]
When  inserting $V$ into  \Cref{ex-eq3}, this yields 
 \[
-\tfrac{1}{4}\Upsilon=
 \kappa \mathrm{e}^{2\vf} \Upsilon
+
\Upsilon(V)\Upsilon = 
\left( \kappa \mathrm{e}^{2\vf} 
 +\tfrac{1}{2}- 
\kappa \mathrm{e}^{2\vf}\right)\Upsilon
 =\tfrac{1}{2}\Upsilon.\]
Together with  $\Upsilon\not=0$, this leads to a contradiction\footnote{Michael Eastwood showed us a more conceptual way of producing this contradiction, which uses the linearisation trick in \Cref{linPDEremark}:   For an arbitrary constant $c$, the PDE $\mathring{\nabla} \Upsilon - \Upsilon^2 =c \mathring{\g}$ on the unit sphere $\S^3$ turns into the  linear PDE 
$\mathring{\nabla}\mathring{\nabla}\sigma+c\sigma \mathring{\g}=0$ when substituting  $\vf=-\log(\sigma)$.
Prolonging this equation yields  a connection whose parallel sections correspond to  solutions of the linear PDE. Computing its curvature  shows that the connection has nontrivial parallel sections only when   $c=1$.}
and  shows that $\g$ is not conformally Einstein, even though it is Bach flat and has a nontrivial Weyl nullity ideal.

\end{example}

%
%

\subsection{Double extensions by $\rr$}
When $\ss=\rr$, we fix a nonzero vector $S$ in $\ss$  and its dual $\sigma\in\ss^*$, i.e., $\sigma(S)=1$, and denote by $\delta$ the corresponding derivation of $\hh$. Moreover, $\ss^*$ is contained in the centre of the double extension $\gg=(\ss^*\oplus_\delta \hh) \rtimes_\delta \ss$. 
\Cref{abellemma} implies we can assume that $\mathrm{b}=0$ without loss of generality in the definition of the double extension $\gg$. 
We will be able to make further simplifications because of the following result:

\begin{theorem}[\cite{FavreSantharoubane87}, see also \cite{kelli-thesis}]\label{isotheo}
Let $\hh$ be a Lie algebra, let $\delta$ and $\hat \delta$ be two derivations of $\hh$ in $\so(\hh)$, and let $\gg$ and $\hat \gg$ be the double extensions of $\hh$ by $\delta$ and $\hat \delta$ respectively. Then there is an isomorphism $F:\gg\to\hat\gg$ of metric Lie algebras if and only if
there is a $\lambda\in \rr$, an $X\in \hh$, and an isomorphism $f:\hh\to\hh$ of metric Lie algebras, such that
\[ f \hat\delta f^{-1}= \lambda \delta+\ad_X.\]
\end{theorem}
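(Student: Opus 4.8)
The plan is to work in the fixed splitting of the double extension, show that every metric-preserving map respecting a canonical filtration has a single normal form, and prove that the homomorphism property of such a map is equivalent to one operator identity on $\hh$; the two implications then read off this equivalence in opposite directions. I write elements of $\gg=(\ss^*\oplus_\delta\hh)\rtimes_\delta\ss$ as triples $(z,H,s)$, with $z\in\ss^*=\rr\sigma$, $H\in\hh$ and $s\in\rr$ the coefficient of the fixed generator $S\in\ss$, and similarly for $\hat\gg$ with $\hat\delta$. With $\b=0$ the scalar product is $\la(z,H,s),(z',H',s')\ra=\la H,H'\ra_\hh+zs'+z's$, and \cref{debracket} specialises to
\[
[(z,H,s),(z',H',s')]=\big(\la\delta H,H'\ra_\hh,\ [H,H']_\hh+s\,\delta H'-s'\,\delta H,\ 0\big),
\]
so in particular $[S,H]=\delta H$. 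The line $\ss^*$ is a central isotropic ideal, $(\ss^*)^\perp=\ss^*\oplus\hh$, and $\ss^*$ is the radical of the scalar product restricted to $(\ss^*)^\perp$, with induced metric $\la.,.\ra_\hh$ on the quotient $(\ss^*)^\perp/\ss^*\cong\hh$.

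The core step is to analyse linear maps $F\colon\gg\to\hat\gg$ of the normal form
\[
F(\sigma)=\lambda\hat\sigma,\qquad F(H)=\phi(H)+\beta(H)\hat\sigma,\qquad F(S)=\tfrac1\lambda\hat S+X_0+c\hat\sigma,
\]
with $\phi\colon\hh\to\hh$ linear, $\beta\in\hh^*$, $X_0\in\hh$ and $\lambda\neq0$, $c\in\rr$. Checking that such an $F$ is an isometry is routine bilinear algebra: the pairing $\la\sigma,S\ra=1$ is exactly what forces the $\ss^*$- and $\ss$-scalars to be reciprocal (here $\lambda$ and $\tfrac1\lambda$), it forces $\phi$ to be an isometry of $(\hh,\la.,.\ra_\hh)$, and it determines $\beta(H)=-\lambda\la\phi H,X_0\ra_\hh$ and $c=-\tfrac\lambda2\la X_0,X_0\ra_\hh$. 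The substantial point is the bracket condition. Comparing $\hh$-components in $F[H,H']=[FH,FH']$ shows $\phi$ is a Lie algebra automorphism of $\hh$, so $f:=\phi^{-1}$ is a metric Lie algebra isomorphism of $\hh$; and comparing $\hh$-components in $F[S,H]=[FS,FH]$, using $[S,H]=\delta H$ and $[\hat S,\phi H]=\hat\delta(\phi H)$, gives $\phi\,\delta=\tfrac1\lambda\hat\delta\,\phi+\ad_{X_0}\phi$, that is,
\[
f\hat\delta f^{-1}=\lambda\,\delta+\ad_{X},\qquad X:=-\lambda\,fX_0,
\]
after conjugating by $\phi^{-1}$ and using $\phi^{-1}\ad_{X_0}\phi=\ad_{\phi^{-1}X_0}$. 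Finally the $\hat\sigma$-components of all brackets match automatically, by the $\ad$-invariance of $\la.,.\ra_\hh$ together with the formula for $\beta$. Thus, for $F$ in normal form, being an isomorphism of metric Lie algebras is equivalent to the operator identity $f\hat\delta f^{-1}=\lambda\delta+\ad_X$.

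This equivalence yields both implications. For the ``if'' direction, given $\lambda\neq0$, $X\in\hh$ and a metric Lie algebra isomorphism $f$ with $f\hat\delta f^{-1}=\lambda\delta+\ad_X$, I set $\phi:=f^{-1}$, $X_0:=-\tfrac1\lambda\phi X$, define $\beta$ and $c$ as above, and take $F$ to be the resulting normal-form map; by the computation just made $F$ is an isometry and a homomorphism, hence the desired isomorphism. For the ``only if'' direction one must first show that an arbitrary metric Lie algebra isomorphism $F\colon\gg\to\hat\gg$ is forced into normal form, i.e.\ that it preserves the filtration $\ss^*\subset(\ss^*)^\perp\subset\gg$. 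The natural argument uses the standard fact that for a metric Lie algebra the centre equals the orthogonal complement of the derived ideal, $\z(\gg)=[\gg,\gg]^\perp$, which $F$ manifestly respects, and then isolates $\ss^*$ intrinsically inside $\z(\gg)$, for instance as the null radical of the scalar product restricted to $\z(\gg)\cap[\gg,\gg]$. Granting $F(\ss^*)=\hat\ss^*$, passing to orthogonal complements gives $F((\ss^*)^\perp)=(\hat\ss^*)^\perp$; hence $F$ induces a metric automorphism of $\hh$ and reciprocal scalars on $\ss^*$ and on $\gg/(\ss^*)^\perp\cong\ss$, putting it in normal form, and the relation follows.

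The hard part is exactly this intrinsic characterisation of $\ss^*$ in the ``only if'' direction. The centre is in general strictly larger than $\ss^*$: a computation with \cref{dead} shows $\z(\gg)=\ss^*\oplus\{(0,H,s)\mid\delta H=0,\ \ad_H=-s\,\delta\}$, whose second summand contributes a copy of $\z(\hh)\cap\ker\delta$ together with an inner-derivation part present precisely in the degenerate cases excluded by the condition $\delta\notin\ad(\hh)$. Moreover the scalar product need not be non-degenerate on $\z(\gg)\cap[\gg,\gg]$, so the candidate characterisation of $\ss^*$ as a canonical isotropic ideal must be justified carefully, using the skew-symmetry $\delta\in\so(\hh)$ together with the centrality and isotropy of $\ss^*$. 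Pinning this down, so that every metric isomorphism is forced to carry $\ss^*$ onto $\hat\ss^*$, is the delicate step; once it is secured, the remainder is the bookkeeping recorded above.
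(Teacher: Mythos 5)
The paper offers no proof of this statement: it is imported from \cite{FavreSantharoubane87} (see also \cite{kelli-thesis}), so there is no in-paper argument to measure yours against, and your proposal has to stand on its own. Its first half does: for a map $F$ respecting the flag $\ss^*\subset(\ss^*)^\perp\subset\gg$, your isometry bookkeeping correctly forces the normal form, and the single bracket $F[S,H]=[FS,FH]$ is indeed equivalent to $f\hat\delta f^{-1}=\lambda\delta+\ad_X$ with $f=\phi^{-1}$ and $X=-\lambda fX_0$; the $\hat\sigma$-components do come out for free from $\ad$-invariance and $\hat\delta\in\so(\hh)$. (You tacitly take $\lambda\not=0$, which is the only sensible reading: for $\lambda=0$ the ``if'' direction fails, e.g.\ when $\hat\delta$ is inner and $\delta$ is outer.) This settles the ``if'' implication.

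The gap you flag in the ``only if'' direction is genuine, and the repair you sketch does not work. You need $F(\ss^*)=\hat\ss^*$, and your candidate invariant --- the null radical of $\la.,.\ra$ restricted to $\z(\gg)\cap[\gg,\gg]$ --- fails to isolate a line precisely in the situations this paper cares about: for the double extensions $\gg_{\Psi,\Phi}$ of oscillator algebras in \Cref{oscsec}, the centre is $\mathrm{span}(\e_-,\e_0)$ with $\e_0$ the central element of $\osc_\Phi(t,s)$; this plane is totally isotropic, lies inside the derived ideal, and hence equals its own null radical, while $\ss^*=\rr\,\e_-$ is a proper line in it. So there is no reason, at the level of your argument, why a metric Lie algebra isomorphism should not rotate $\ss^*$ inside this isotropic plane, and the reduction to normal form --- the entire content of the ``only if'' direction --- is not established. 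Closing it requires either a finer intrinsic characterisation of $\ss^*$ or, as in \cite{FavreSantharoubane87}, an argument that any isomorphism can be composed with an automorphism of $\hat\gg$ so as to preserve the flag; as written, only one implication of the theorem is proved.
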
 

In the following 
we will  work with a basis $\e_-,\e_1,\ldots, \e_{n-2},\e_+$ of $\gg$ of the form $\e_-=\sigma$, $\e_a$, $a=1, \ldots, n-2$, a basis of $\hh$ such that $\h_{ab}=\la \e_{a},\e_{b}\ra_\hh$ are constants, and $\e_+=S$. 
We will use the following index convention: greek indices will run from $-, 1,\ldots, n-2, +$ whereas latin indices run from only from $1$ to $n-2$. In this basis the metric satisfies 
\[\g_{+-}=\la \e_-,\e_+\ra =1, \quad \g_{\pm a}=\la \e_\pm ,\e_a\ra =0, \quad \g_{ab}=\h_{ab} =\la\e_a,\e_b\ra_\hh.
\]
The Ricci tensor of a double extension by $\rr$, when written in this basis is given by

\begin{equation}\label{ric1dim}
   \R_{\mu\nu}= -\frac{1}{4}\begin{pmatrix}
   0&0&0
   \\
0& (K_\hh)_{ab} & \eta_a\\
 0&  \eta_b& \tr(\delta^2)
 \end{pmatrix},\quad \R_\mu^{~\nu}
 = -\frac{1}{4}\begin{pmatrix}
   0&\eta_a&\tr(\delta^2)
 \\
0& (K_\hh)_{a}^{~b} & \eta^b\\
 0&  0& 0 \end{pmatrix},
  \end{equation}
  where $(K_\hh)_{ab}$ is the  Killing form of $\hh$, and  $\eta_a$ are
  \[
  \eta_a= \tr(\delta \circ \ad^\hh_{\e_a})
  =
  -\h^{bc}\h(\e_a,\ad_{\e_b}\circ \delta (\e_c)).\] 
Hence, the square of the Ricci tensor is
\[
\Ric^2=\R_{\mu\kappa}\R^\kappa_{~\nu}=
 \frac{1}{16}\begin{pmatrix}
   0&(K_\hh)_a^{~c}\eta_c&\eta^c\eta_c
 \\
0& (K_\hh)_{a}^{~c}  (K_\hh)_{c}^{~b} &  (K_\hh)_{c}^{~b} \eta^c\\
 0&  0& 0 \end{pmatrix}.
\]

Assume that the first obstruction  vanishes, i.e., that the manifold is Bach flat. By \Cref{theodextric} this yields $\Ric^2=\R_{\mu\kappa}\R^\kappa_{~\nu}$, which implies that the scalar curvature vanishes,
we will now evaluate the second  condition. That is, we will consider the existence of a nonzero element  $0\not=V=V^\mu\e_\mu=V^-\e_-+V^a\e_a+V^+\e_+$ in the Weyl nullity ideal, 
\begin{equation}\label{obs2}
\begin{array}{rcl}
0=V^\mu \W_{\alpha\beta\gamma\mu}&=&
\tfrac{V^-}{n-2}(\g_{\beta-}\R_{\gamma\alpha} -\g_{\alpha-} \R_{\gamma\beta})
\\
&&+
V^d\left(\R_{\alpha\beta\gamma d}+\tfrac{2}{n-2}(\g_{\alpha[\gamma} \R_{d]\beta}+ \g_{\beta[d}\R_{\gamma]\alpha})\right)
\\
&&
V^+\left(\R_{\alpha\beta\gamma+}+\tfrac{1}{n-2}(\g_{\alpha\gamma} \R_{+\beta}
- \g_{\alpha +} \R_{\gamma\beta}
+ \g_{\beta +}\R_{\gamma\alpha}- \g_{\beta\gamma}\R_{+\alpha})\right), 
\end{array}
\end{equation}
where $\W$ is the Weyl tensor and where we use that $\R_{\alpha\beta\gamma-}=0$ and $\R_{\alpha-}=0$. 
This already provides us with a first solution to $V\hook\W$ in a special situation:
\begin{proposition}\label{obs2lem} If $\gg$ is a double extension of $\hh$  by $\rr$ and $\delta$ such that $\Ric=\tr(\delta^2)$, that is if 
$K_\hh=0$ and $\eta=0$,
then $\e_-\hook \W=0$, so $\e_-$ is  in the Weyl nullity ideal.
\end{proposition}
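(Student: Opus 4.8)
The plan is to substitute the candidate vector $V=\e_-$ directly into the already-derived expansion \eqref{obs2} of $V\hook\W$, and to exploit how drastically the Ricci tensor collapses under the hypothesis.

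First I would read off the consequences of $K_\hh=0$ and $\eta=0$ from \eqref{ric1dim}: the entire first row and column of the Ricci matrix vanish, as does the central $(K_\hh)_{ab}$ block, so that the unique surviving component is $\R_{++}=-\tfrac14\tr(\delta^2)$. In particular the scalar curvature $\rho=-\tfrac14\tr(K_\hh)$ vanishes, which is precisely the scalar-flatness hypothesis under which \eqref{obs2} was derived, so \eqref{obs2} is available here. I would also recall the two facts used in that derivation, namely $\R_{\alpha\beta\gamma-}=0$ (the double bracket has no $\ss$-component when $\ss=\rr$, by \eqref{debracket}) and $\ss^*\hook\Ric=0$, both of which hold because $\e_-=\sigma\in\ss^*$ is central.

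Next I would set $V=\e_-$, i.e.\ $V^-=1$ and $V^a=V^+=0$, in \eqref{obs2}. Then the $V^d$- and $V^+$-lines drop out entirely and the contraction reduces to
\[
\e_-\hook\W=\W_{\alpha\beta\gamma-}=\tfrac{1}{n-2}\bigl(\g_{\beta-}\R_{\gamma\alpha}-\g_{\alpha-}\R_{\gamma\beta}\bigr).
\]
The final step is a one-line index count. Because $\e_-=\sigma$ pairs nontrivially only with $\e_+=S$, the factor $\g_{\beta-}$ is nonzero only for $\beta=+$ and $\g_{\alpha-}$ only for $\alpha=+$; meanwhile, with only $\R_{++}$ surviving, $\R_{\gamma\alpha}\neq0$ forces $\gamma=\alpha=+$ and $\R_{\gamma\beta}\neq0$ forces $\gamma=\beta=+$. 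Hence each term can contribute only in the single configuration $\alpha=\beta=\gamma=+$, where both equal $\g_{+-}\R_{++}=\R_{++}$ and cancel. Thus $\W_{\alpha\beta\gamma-}=0$ identically, which is the assertion $\e_-\hook\W=0$, placing $\e_-$ in the Weyl nullity ideal $\n$.

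There is essentially no obstacle here: once the hypothesis is fed into \eqref{ric1dim}, everything is forced. The only point deserving a moment's care is verifying that the two potentially surviving terms truly cancel rather than merely being separately constrained, but the metric pairing together with the nullity of every Ricci component except $\R_{++}$ funnels both contributions into the same configuration, where antisymmetry in $\alpha\beta$ makes the vanishing manifest.
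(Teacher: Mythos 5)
Your proposal is correct and follows exactly the route the paper intends: the paper states \Cref{obs2lem} as an immediate consequence of the expansion \eqref{obs2}, and your argument simply makes explicit the substitution $V=\e_-$ together with the collapse of \eqref{ric1dim} to the single component $\R_{++}$ and the cancellation of the two surviving terms. Nothing is missing.
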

As an example, \Cref{nilpprop} shows that the assumptions in this proposition are satisfied when $\hh$ is nilpotent.

Hence, from now on we assume that there is at least one index pair $(c,\beta)$ such that $\R_{c\beta}\not=0$.
Setting $\alpha=-$ in \cref{obs2} we get
\begin{equation}
\label{obs0}
0=
\g_{-\gamma}V^d \R_{d\beta}
+V^+(\g_{-\gamma} \R_{+\beta}
-  \R_{\gamma\beta}),
\end{equation}
which, when evaluated for $\gamma=+$ and $\gamma=a$,  implies the conditions
\begin{equation}
\label{obs12}
0= V^d \R_{d\beta}=-\tfrac{1}{4} \eta(\hat V),\qquad 0=V^+\R_{a\beta}, \end{equation} for all $a=1,\ldots n-2, \ \beta=1, \ldots ,n-2, + $.
This first equation implies that the $\hh$-component $\hat V$ of $V$ is in the kernel of $\Ric$.
  Hence, since we have assumed that there is an index pair  $(c,\beta)$ such that $\R_{c\beta}\not=0$, we have that $V^+=0$ and therefore that $V$ is in the kernel of $\Ric$. Now we evaluate the above equations for $\alpha=\gamma=+$ and $\beta=b$,
\[
\begin{array}{rcl}
0&=&
-\tfrac{V^-}{n-2} \R_{+b}
+
V^d\left(\R_{+ b+ d}+\tfrac{1}{n-2} \g_{bd}\R_{++}\right)
\\[2mm]
&=&
-\tfrac{V^-}{n-2} \R_{+b}-
\g( \R (\hat V,\e_+,)\e_+,\e_b)
+\tfrac{\R_{++}}{n-2} \g(\hat V,\e_b).
\end{array}
\]
This shows that the linear map $A:\hh\to\hh$, defined by $A(X)=\R(X,\e_+)\e_+$, which in fact is given by $-\tfrac{1}{4} \delta^2$, satisfies
\[A(\hat V)=\tfrac{1}{n-2} \tr(A)\hat V -\tfrac{V^-}{n-2} \Ric(\e_+)^\sharp,\]
as $\R_{++}=\tr(A)=-\tfrac{1}{4}\tr(\delta^2)$. This means we have
\[\delta^2(\hat V)= 
\tfrac{1}{n-2} \tr(\delta^2)\hat V -\tfrac{V^-}{n-2} \eta^\sharp.\]
Next, we look at $\alpha=+$, $\beta=b$ and $\gamma=c$ and use $V^d\R_{d+}=0$ from above, so
\[
0
=
-\tfrac{V^-}{n-2}  \R_{cb}
+
V^d\left(\R_{+bc d}+\tfrac{1}{n-2} \g_{bd}\R_{c+}\right)
=
-\tfrac{V^-}{n-2}  \R_{cb}+
V^d\R_{+bc d}+\tfrac{1}{n-2} V_{b}\R_{c+},
\]
which can again be rewritten as
\[0=
-\tfrac{V^-}{n-2}  K^\sharp_\hh  +
\delta\circ \ad_{\hat V} +\tfrac{1}{n-2} \hat{V}\otimes \eta.
\]
Here $K_\hh^\sharp$ is the endomorphism obtained by metric dual of the Killing form, or
\[0=
-\tfrac{V^-}{n-2}  K^\sharp_\hh  +
 \ad_{\hat V}\circ \delta +\tfrac{1}{n-2} \eta^\sharp\otimes \hat{V}^\flat.
\]

Note that 
the equation for $\alpha=a$, $\beta=b$ and $\gamma=+$ follows from this by the Jacobi identity
\[0= V^\mu\W_{ab+\mu}+ V^\mu\W_{+ab\mu} -V^\mu\W_{+ba\mu},\] yielding 
\[
0=\ad^\flat_{\delta(V)}- \tfrac{2}{n-2}\hat{V}^\flat\wedge \eta,\]
where $\ad_{\delta(V)}$ is understood to be dualised with $\h$. With this we arrive at a reformulation of the two obstructions for double extension by $\rr$.

\begin{proposition}\label{obsextprop}
Let $\hh$ be a metric Lie algebra of dimension $m$ with Killing form $K_\hh$, a derivation $\delta$,  and corresponding  $1$-form $\eta(X)=\tr(\delta\circ \ad_X)$. Let $\gg=(\rr^*\+_\delta \hh)\rtimes_\delta \rr$ be the metric  Lie algebra obtained by the  double extension of $\hh$ by $\rr$ and $\delta$. If $\gg$ is conformally Einstein, then it holds that: 
\begin{enumerate}[(a)]
\item $\Ric^2=0$, i.e., $K^2_\hh=0$, $\eta^\sharp\hook K_\hh=0$, and $\eta^\sharp$ is a null vector in $\hh$.
\item If $\dim(\gg)>4$ and $\gg$ is not Ricci-flat  (and hence not Einstein), then either $K_\hh=0$ and $\eta=0$ (in which case it is $\e_-\hook\W=0$), or there is  
 $V=V^-\e_-+\hat V\in \rr^*\oplus \hh$ such that
 \begin{eqnarray}
 \label{eq0}\eta(\hat V)&=&0,
 \\
  \label{eq1}
   \delta^2(\hat V)-
\tfrac{1}{m}\left( \tr(\delta^2)\hat V -V^- \eta^\sharp\right)
&=&0,
\\
 \label{eq2}\delta\circ \ad_{\hat V}
-\tfrac{1}{m} \left(V^- K^\sharp_\hh  -
\hat{V}\otimes \eta\right)&=&0,
\\
\label{eq3}
\ad_{\hat V}\circ [.,.]+\tfrac{2}{m}\hat V^\flat\wedge K^\sharp_\hh&=&0,
\end{eqnarray}
where $\ad$ and $[.,.]$ are those of $\hh$.
\end{enumerate}
\end{proposition}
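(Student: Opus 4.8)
The plan is to read both parts off the two necessary conditions collected in \Cref{obsinv}, namely Bach flatness and the nontriviality of the Weyl nullity ideal, feeding in the explicit Ricci tensor of a double extension by $\rr$ displayed in \cref{ric1dim}.

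For part (a), I would invoke \Cref{theodextric}, which for any double extension identifies Bach flatness with $\Ric^2=0$. Squaring the matrix $\R_\mu^{~\nu}$ of \cref{ric1dim} and demanding that the result vanish then reads off the three stated conditions: the $\hh\times\hh$ block gives $K_\hh^2=0$, the mixed blocks give $\eta^\sharp\hook K_\hh=0$, and the $(\e_-,\e_+)$ corner $\eta^c\eta_c$ gives that $\eta^\sharp$ is null. This part is purely computational and presents no difficulty.

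For part (b) the input is that $\gg$ is not Einstein with $\dim\gg>4$, so by \Cref{obsinv} the Weyl nullity ideal contains a nonzero $V$, which I would expand as $V=V^-\e_-+V^a\e_a+V^+\e_+$ and feed into the componentwise condition $V\hook\W=0$ written out in \cref{obs2}. The strategy is to specialise the free indices. If $K_\hh=0$ and $\eta=0$ the Ricci tensor is pure and \Cref{obs2lem} already places $\e_-$ in the nullity, so I may assume some $\R_{c\beta}\neq0$; then setting $\alpha=-$ and varying $\gamma$ yields $\eta(\hat V)=0$, which is \eqref{eq0}, together with $V^+\R_{a\beta}=0$, the latter forcing $V^+=0$ and hence $\hat V\in\ker\Ric$. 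With $V^+=0$ secured, the index choices $(\alpha,\beta,\gamma)=(+,b,+)$, $(+,b,c)$ and $(a,b,+)$ produce the endomorphism identities \eqref{eq1}, \eqref{eq2} and \eqref{eq3}, after rewriting the curvature contractions $\R_{+b+d}$, $\R_{+bcd}$, etc.\ in terms of $\delta$, $\ad^\hh$, $K_\hh$ and $\eta$ via $\R(X,Y)Z=-\tfrac14[[X,Y],Z]$ and $A(X)=\R(X,\e_+)\e_+=-\tfrac14\delta^2(X)$.

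The difficulty here is organisational rather than conceptual: keeping the index manipulations in \cref{obs2} under control and translating each curvature component faithfully into Lie-theoretic data on $\hh$. One economy I would exploit is that \eqref{eq3} need not be derived from scratch, since contracting the first Bianchi identity $V^\mu\W_{ab+\mu}+V^\mu\W_{+ab\mu}-V^\mu\W_{+ba\mu}=0$ shows it to be a formal consequence of the $(+,b,c)$- and $(+,b,+)$-type relations already obtained. As all of these computations appear in the discussion preceding the statement, the proof reduces to assembling those displayed identities, with the dichotomy $K_\hh=\eta=0$ versus $\R_{c\beta}\neq0$ made explicit.
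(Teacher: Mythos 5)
Your route is the paper's own: part (a) follows from \Cref{theodextric} together with the explicit matrix for $\Ric^2$ displayed after \cref{ric1dim}, and part (b) proceeds by expanding a nonzero element $V=V^-\e_-+V^a\e_a+V^+\e_+$ of the Weyl nullity ideal, specialising the free indices in \cref{obs2}, splitting into the cases $K_\hh=\eta=0$ (handled by \Cref{obs2lem}) and $\R_{c\beta}\not=0$ for some pair $(c,\beta)$, deducing $V^+=0$ and $\hat V\in\ker\Ric$, and reading off \eqref{eq0}, \eqref{eq1} and \eqref{eq2} from the index choices $\alpha=-$, $(+,b,+)$ and $(+,b,c)$, exactly as in the discussion preceding the proposition.

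The one genuine gap is your account of \eqref{eq3}. You assign it to the index choice $(\alpha,\beta,\gamma)=(a,b,+)$ and claim it is a formal Bianchi consequence of the $(+,b,+)$- and $(+,b,c)$-relations. The first Bianchi identity does make the $(a,b,+)$ components redundant, but the identity they encode is the scalar-valued relation (a $2$-form on $\hh$) $0=\ad^\flat_{\delta(\hat V)}-\tfrac{2}{m}\,\hat V^\flat\wedge\eta$ recorded in the paper just before the proposition; this is not \eqref{eq3}. Equation \eqref{eq3} is an $\hh$-valued bilinear identity --- on $X,Y\in\hh$ it asserts the vanishing of the vector $\ad_{\hat V}[X,Y]+\tfrac{1}{m}\bigl(\h(\hat V,X)K_\hh^\sharp(Y)-\h(\hat V,Y)K_\hh^\sharp(X)\bigr)$ --- and it encodes the components $V^\mu\W_{abc\mu}=0$ with all three free indices in $\hh$. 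These are not determined by the components carrying a $+$ or $-$ index: the Bianchi identity applied to a triple of latin indices only cycles all-latin components among themselves. So your plan as written never produces \eqref{eq3}, which is indispensable downstream (it is precisely the relation used to force $V^{\ell+1}=0$ and to derive \eqref{eq3a} in the proof of \Cref{oscexttheo}). The repair is routine: evaluate $V^d\W_{abcd}=0$ directly, using $\R_{abcd}V^d=-\tfrac14\la[[X_a,X_b],X_c],\hat V\ra$, the $\ad$-invariance of the metric to move $\hat V$ inside the bracket, $\R_{ab}=-\tfrac14(K_\hh)_{ab}$, and $V^d\R_{da}=0$.
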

Note that (\ref{eq3}) is just
\[
\ad_{\hat V}[X,Y]+\tfrac{1}{m}\left(\h(\hat V,X)K^\sharp_\hh(Y)-\h(\hat V,Y)K^\sharp_\hh(X)\right)=0.\]
We will use these equations in the next section, where we will also deal with the cases $n\le 4$.
\section{The oscillator algebras and related double extensions}
\label{oscsec}
\subsection{The oscillator algebras}\label{oscsubsec}
Let $\rr^{t,s}$ be a semi-Euclidean vector space of dimension $\ell=t+s$ and signature $(t,s)$, and denote by $\langle.,.\rangle$ the semi-Euclidean standard inner product. We want to doubly extend the abelian metric Lie algebra $\rr^{t,s}$ by $\rr$ and a  linear map $\Phi\in \so(t,s)$.  If $\Phi$ has a kernel, the double extension 
$(\rr\+_\Phi\rr^{t,s}) \ltimes_\Phi\rr$ is isomorphic as metric Lie algebra to the direct sum of the kernel of $\Phi$ with $( \rr\+_\Phi V)\ltimes_\Phi\rr)$, where $V$ is the image of $\Phi$, and hence decomposable. 
So from now on we assume that $\Phi\in \so(t,s)$ is invertible, 
which implies that $\ell$ is even. In this situation, 
the oscillator algebra $\osc_\Phi(t,s)$ of dimension $m=\ell+2$ and signature $(t+1,s+1)$  is then defined as the double extension of  $\rr^{t,s}$ by $\rr$ and by  $\Phi$,  i.e., \[\osc_\Phi(t,s)=(\rr^*\+_{\Phi}\rr^{t,s})\rtimes_\Phi\rr.\] We set $\osc_\Phi(\ell)= \osc_\Phi(0,\ell)\simeq \osc_\Phi(\ell,0)$. The adjoint of $\osc_\Phi(t,s)$ is given by
   \[\ad_{(\rho ,X,r)}=\begin{pmatrix}
0 & \langle \Phi (X),.\rangle &0\\
   0& r\Phi &-\Phi (X) \\
   0&0&0
   \end{pmatrix},\]
   and the $\ad$-invariant scalar product   is
   $\la.,.\ra =2r \rho  +\langle.,.\rangle_{t,s}$ and hence of signature $(t+1,s+1)$. Here we work in a basis $\e_0,\e_1,\ldots, \e_\ell,\e_{\ell+1}$ with $\e_0=(1,0,0)$, $\e_{\ell+1}=(0,0,1)$ and $\e_i$, $i=1,\ldots , \ell$ a basis of $\hh$. We denote by $e_{\mu}^*$ the (algebraically) dual basis.

\begin{remark}
As the only non abelian Lie algebra of dimension $2$ does not admit an $\ad$-invariant scalar product, every indecomposable double extension of dimension $\le 4$ is isomorphic to $\osc_J(2)$ with $J\in \so(2)$.
\end{remark}

  The centre of $\osc_\Phi (t,s)$ is $\rr^*=\rr \cdot \e_0$ and the derived Lie algebra is equal to   $\rr^*\+ \rr^{t,s}$, which is isomorphic to the Heisenberg algebra and hence nilpotent. Therefore:
   \begin{lemma}
   The oscillator algebras
 $\osc_\Phi(t,s)$ are solvable. Their Killing form is given by
 $K((\rho ,X,r),(\sigma,Y,s))=rs \ \tr(\Phi^2)$, i.e., $K=\tr(\Phi^2)\, (\e_{\ell+1}^*)^2$.
  \end{lemma}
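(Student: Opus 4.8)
The plan is to treat the two assertions separately, both by direct computation from the explicit adjoint representation of $\osc_\Phi(t,s)$ displayed above. For solvability I would first read off the bracket from that adjoint matrix: for $u=(\rho,X,r)$ and $v=(\sigma,Y,s)$ one has
\[
[u,v]=\big(\langle\Phi(X),Y\rangle,\ r\Phi(Y)-s\Phi(X),\ 0\big),
\]
so the third ($\ss$-)component of every bracket vanishes. Since $\Phi$ is invertible, varying $r,s,X,Y$ shows $[\gg,\gg]=\rr^*\+\rr^{t,s}$, which is exactly the Heisenberg algebra noted above. Restricting the same bracket formula to $\gg^{(1)}=\rr^*\+\rr^{t,s}$ (elements with $r=s=0$) gives $[u,v]=(\langle\Phi(X),Y\rangle,0,0)\in\rr^*$, and nondegeneracy of $\langle\Phi(\cdot),\cdot\rangle$ forces $\gg^{(2)}=\rr^*$; finally $[\rr^*,\rr^*]=0$ because $\rr^*$ is central, so $\gg^{(3)}=0$ and $\gg$ is solvable. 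Equivalently, the derived algebra is nilpotent and the quotient $\gg/[\gg,\gg]\cong\rr$ abelian, so an extension of solvable by solvable is solvable.

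For the Killing form I would avoid a brute-force trace by invoking the Ricci formula for double extensions by $\rr$ in \eqref{ric1dim}. For the oscillator the fibre $\hh=\rr^{t,s}$ is abelian, so $K_\hh=0$ and, because $\ad^\hh_{\e_a}=0$, the one-form $\eta_a=\tr(\delta\circ\ad^\hh_{\e_a})$ vanishes as well; with $\delta=\Phi$ the only surviving entry of \eqref{ric1dim} is $\R_{\ell+1,\ell+1}=-\tfrac14\tr(\Phi^2)$. Since $\Ric=-\tfrac14 K$, this gives $K_{\ell+1,\ell+1}=\tr(\Phi^2)$ and all other components zero, i.e.\ $K=\tr(\Phi^2)(\e_{\ell+1}^*)^2$, as claimed. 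As a cross-check one may instead compute $K(u,v)=\tr(\ad_u\ad_v)$ directly: the displayed adjoint is block upper triangular with both corner diagonal blocks equal to zero, the only nonzero diagonal block being the central $r\Phi$, so in the product $\ad_u\ad_v$ only the central block $rs\Phi^2$ survives on the diagonal and contributes trace $rs\,\tr(\Phi^2)$.

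This lemma is essentially bookkeeping, so I do not expect a genuine obstacle; the only point requiring care is to confirm that the nilpotent off-diagonal pieces of the adjoint (the column $-\Phi(X)$ and the covector row $\langle\Phi(X),\cdot\rangle$) cannot feed into the trace. This is guaranteed by the strict block-triangular pattern, which forces any such term to pair against the identically zero bottom block-row. Nondegeneracy of the inner product on $\rr^{t,s}$ together with invertibility of $\Phi$ are precisely what make the derived-series computation terminate at the stated ideals, and the reduction via \eqref{ric1dim} is what makes the Killing-form computation immediate rather than requiring an explicit trace in all of $\gg$.
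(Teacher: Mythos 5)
Your proof is correct and follows essentially the same route as the paper, which derives solvability from the observation that $[\gg,\gg]=\rr^*\+\rr^{t,s}$ is the (nilpotent) Heisenberg algebra and obtains the Killing form by direct computation from the displayed adjoint (equivalently, by specialising the double-extension Ricci formula \eqref{ric1dim} with $K_\hh=0$ and $\eta=0$ since $\rr^{t,s}$ is abelian). Both your derived-series computation and your block-triangular trace argument are accurate, so nothing further is needed.
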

  
By \Cref{solvcor} and \Cref{obs2lem}
  this implies that the oscillator algebras satisfy both conformally Einstein conditions:
  \begin{proposition}
  The oscillator algebras are Bach flat and satisfy 
  $V\hook C=0$ for $V=\e_0=(1,0,0)\in \rr^*$ a central element.
  \end{proposition}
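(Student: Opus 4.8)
The plan is to read off both assertions from \Cref{solvcor} and \Cref{obs2lem}, after verifying that the oscillator algebra, regarded as the double extension $\osc_\Phi(t,s)=(\rr^*\+_\Phi\rr^{t,s})\rtimes_\Phi\rr$ with $\hh=\rr^{t,s}$ abelian and $\delta=\Phi$, satisfies the hypotheses of each.

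For Bach flatness I would combine the preceding Lemma, which records that every $\osc_\Phi(t,s)$ is solvable, with \Cref{solvcor}: a solvable metric Lie algebra has vanishing scalar curvature and $2$-step nilpotent Ricci tensor, and hence is Bach flat. This settles the first claim with no further computation.

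For the nullity statement I would check the two hypotheses of \Cref{obs2lem}, namely $K_\hh=0$ and $\eta=0$. Since $\hh=\rr^{t,s}$ is abelian, every bracket in $\hh$ vanishes, so its Killing form $K_\hh$ is identically zero; moreover $\ad^\hh_X=0$ for all $X\in\hh$, so the one-form $\eta(X)=\tr(\delta\circ\ad^\hh_X)$ is identically zero as well. Thus $\Ric=\tr(\delta^2)$ in the sense of \Cref{obs2lem}, and that proposition gives $\e_-\hook\W=0$, i.e.\ $\e_-$ lies in the Weyl nullity ideal. Finally I would match notation: in the general double-extension-by-$\rr$ conventions the distinguished central generator is $\e_-=\sigma\in\rr^*$, which in the oscillator basis is exactly $\e_0=(1,0,0)$, so $V=\e_0$ satisfies $V\hook\W=0$, as claimed.

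There is essentially no obstacle here; the proposition is a bookkeeping application of the two cited results once the abelianness of $\hh$ is used to kill both $K_\hh$ and $\eta$. The only point demanding a little care is the identification $\e_-=\e_0$ between the two notational conventions, which also makes transparent that the nullity generator is light-like---consistent with the $2$-step nilpotency of $\Ric$ forcing the image of $\Ric$, and the Weyl nullity directions, into a degenerate subspace.
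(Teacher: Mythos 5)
Your proposal is correct and follows exactly the paper's route: the preceding lemma gives solvability, \Cref{solvcor} then gives Bach flatness, and \Cref{obs2lem} applies because $\hh=\rr^{t,s}$ abelian forces $K_\hh=0$ and $\eta=0$, with $\e_-=\sigma=\e_0$ under the two bases. The paper's own proof is just the one-line citation of these two results, so your write-up simply supplies the same bookkeeping in more detail.
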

  
For the {\em {\bf Proof of \Cref{osctheo}}}, that the oscillator algebras are conformally Einstein,  it remains to check that the vector field 
$\e_0$ on $G$ indeed satisfies  the conditions  in (A) before  \Cref{cedef}:
as $\e_0$ is a parallel vector field on the corresponding simply connected metric Lie group $G$, it is a gradient vector field, and because of the formula for the Killing form,  $\Ric$ and  $\ro$, its metric dual satisfies equation  \cref{cepde1}. Hence the oscillator algebras are conformally Einstein in the sense of \Cref{cedef}.
\hfill $\square$

\bigskip

In the next section we will consider double extensions of  
the oscillator algebras, for which we will need to know their derivations.
\begin{lemma}\label{oscderlemma}
Every derivation $\delta\in \so(\osc_\Phi(t,s))$ of $\osc_\Phi(t,s)$ is of the form
\[\delta=(\Psi,u):=
\begin{pmatrix}0& \langle u,.\rangle &0
\\
0&\Psi & -u
\\
0&0&0
\end{pmatrix},\quad\text{ with $\Psi\in \so(t,s)$ such that $ [\Phi,\Psi]=0$ and $u\in \rr^{t,s}$,}\]
and the corresponding $1$-form $\eta$ is given by
\[\eta(\rho ,X,r)=r\ \tr(\Psi\circ \Phi),\]
i.e., $\eta=\tr(\Psi\circ \Phi)\e_{\ell+1}^*$ and hence $\eta^\sharp=\tr(\Psi\circ \Phi)\e_0$.
\end{lemma}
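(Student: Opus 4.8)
The plan is to pin down $\delta$ by writing it as a block matrix with respect to the decomposition $\osc_\Phi(t,s)=\rr\e_0\oplus\rr^{t,s}\oplus\rr\e_{\ell+1}$, and then successively imposing that $\delta$ preserves the canonical ideals, that $\delta$ is skew, and finally that $\delta$ is a derivation. First I would use the intrinsic structure of the oscillator algebra: since $\Phi$ is invertible, a short check with the bracket shows that $\rr\e_0=\rr^*$ is exactly the centre $\z(\gg)$ and that the derived algebra is $[\gg,\gg]=\rr^*\oplus\rr^{t,s}$. Any derivation preserves both, so $\delta(\e_0)\in\rr\e_0$ and $\delta(\rr^*\oplus\rr^{t,s})\subset\rr^*\oplus\rr^{t,s}$. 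In block form this forces $\delta$ to be upper triangular,
\[
\delta=\begin{pmatrix} a & \beta & b\\ 0 & \Psi & u'\\ 0&0&d\end{pmatrix},
\]
with $a,b,d\in\rr$, $\beta\in(\rr^{t,s})^*$, $u'\in\rr^{t,s}$ and $\Psi\in\gl(\rr^{t,s})$.

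Next I would impose skew-symmetry, $\la\delta Y,Z\ra=-\la Y,\delta Z\ra$. Since $\la.,.\ra$ pairs $\e_0$ with $\e_{\ell+1}$ and restricts to $\langle.,.\rangle$ on $\rr^{t,s}$, reading this condition block by block yields $b=0$, $d=-a$, $\Psi\in\so(t,s)$, and $\beta=\langle u,.\rangle$ with $u:=-u'$. Hence
\[
\delta=\begin{pmatrix} a & \langle u,.\rangle & 0\\ 0 & \Psi & -u\\ 0&0&-a\end{pmatrix},\qquad \Psi\in\so(t,s),
\]
and it remains only to force $a=0$ and $[\Phi,\Psi]=0$.

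These last two facts must come from the derivation identity, which I would evaluate on the structure relations $[\e_{\ell+1},\e_i]=\Phi(\e_i)$ and $[\e_i,\e_j]=\langle\Phi\e_i,\e_j\rangle\e_0$; together with the centrality of $\e_0$ these generate all brackets. I expect the second relation to be satisfied automatically (using $\Phi,\Psi\in\so(t,s)$) and the first to collapse to the single relation $[\Phi,\Psi]=a\Phi$. The main obstacle is ruling out $a\neq0$. The key observation is that if $a\neq0$ then $[\Phi,\Psi]=a\Phi$ shows that $\Phi$ spans the derived algebra of the two-dimensional, hence solvable, Lie subalgebra $\langle\Phi,\Psi\rangle\subset\gl(\rr^{t,s})$. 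Passing to the complexification and invoking Lie's theorem, the derived algebra of a solvable linear Lie algebra acts by nilpotent endomorphisms, so $\Phi$ would be nilpotent, contradicting its invertibility. Therefore $a=0$ and $[\Phi,\Psi]=0$, which is exactly the asserted form $\delta=(\Psi,u)$.

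Finally, for the formula for $\eta$ I would multiply the block matrices of $\delta=(\Psi,u)$ and $\ad^{\osc}_{(\rho,X,r)}$ and take the trace. Only the middle diagonal block survives, contributing $\tr(r\,\Psi\Phi)$, so
\[
\eta(\rho,X,r)=\tr\!\big(\delta\circ\ad^{\osc}_{(\rho,X,r)}\big)=r\,\tr(\Psi\circ\Phi),
\]
that is, $\eta=\tr(\Psi\circ\Phi)\,\e_{\ell+1}^*$. Dualising with $\la.,.\ra$ and using that $\e_0$ is the metric dual of $\e_{\ell+1}^*$ (since $\la\e_0,\e_{\ell+1}\ra=1$ while $\e_0$ is null and orthogonal to $\rr^{t,s}$) gives $\eta^\sharp=\tr(\Psi\circ\Phi)\,\e_0$.
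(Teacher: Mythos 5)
Your proof is correct and follows the same overall strategy as the paper: write $\delta$ in block form with respect to $\rr\e_0\oplus\rr^{t,s}\oplus\rr\e_{\ell+1}$, use invariance of the centre together with skew-symmetry to reduce to the parameters $(a,\Psi,u)$ with $-a$ in the lower corner, and then impose the derivation identity; the trace computation for $\eta$ and the dualisation $\eta^\sharp=\tr(\Psi\circ\Phi)\,\e_0$ also match the paper. The one place where you genuinely diverge is the elimination of the diagonal parameter $a$: the paper reads off the two equations $a\Phi(X)=0$ and $-ar\Phi=r[\Psi,\Phi]$ from $[\delta,\ad_{(\rho,X,r)}]=\ad_{\delta(\rho,X,r)}$ and gets $a=0$ at once from invertibility of $\Phi$, whereas you observe that the derivation identity really only yields the single relation $[\Phi,\Psi]=a\Phi$, and then rule out $a\neq0$ via Lie's theorem (the derived algebra of the solvable span of $\Phi$ and $\Psi$ acts by nilpotent operators, contradicting invertibility of $\Phi$). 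That step is valid and, if anything, more careful than the paper's presentation; it can also be done more cheaply: $[\Phi,\Psi]=a\Phi$ with $\Phi$ invertible gives $\Phi\Psi\Phi^{-1}=\Psi+a\,\mathrm{Id}$, and taking traces yields $a\ell=0$, hence $a=0$. One small inaccuracy in your write-up: the relation $[\e_i,\e_j]=\langle\Phi \e_i,\e_j\rangle\e_0$ is not satisfied automatically from $\Phi,\Psi\in\so(t,s)$ alone --- it produces $a\langle\Phi X,Y\rangle=\langle[\Phi,\Psi]X,Y\rangle$, i.e.\ the same constraint $[\Phi,\Psi]=a\Phi$ --- but since this duplicates the constraint you already have, your conclusion is unaffected.
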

\begin{proof}
Since $\rr^*$ is the center of $\osc_\Phi(t,s)$, it is left invariant under $\delta$. This and  
the condition that $\delta $ is skew with respect to $\h$, imply that 
\[\delta=
\begin{pmatrix}a& \langle u,.\rangle &0
\\
0&\Psi & -u
\\
0&0&-a
\end{pmatrix},
\quad\text{
with $\Psi\in \so(t,s)$, $a\in \rr$,  and $u\in \rr^{t,s}$. }\]The conditions that $\delta$ is a derivation means that
\[\left[ \delta,\ad_{(\rho,X,r)}\right]=\ad_{\delta (\rho,X,r)}.\]
Multiplying the corresponding matrices yields the equations
\[a\Phi(X)=0,\quad -ar\Phi=r[\Psi,\Phi],\]
and hence  $a=0$ and $ [\Phi,\Psi]=0$.
%
The formula for $\eta$ is derived by direct calculation.
\end{proof}

\subsection{Double extensions of the oscillator algebras}
First we use \Cref{isotheo} to simplify the derivation we are using for the double extension of the oscillator algebra:
\begin{lemma}
Let $\hh=\osc_\Phi(t,s)$ be the oscillator algebra given by $\Phi\in \so(t,s)$. Moreover, let $\Psi\in \so(t,s)$ and $\delta=(\Psi,0)$ and $\hat\delta=(\Psi, u)$ with $u\in \rr^{t,s}$ two derivations of $\hh$ (with the notations as in \Cref{oscderlemma}). Then the double extensions of $\hh$ by $\delta$ and $\hat \delta$ are isomorphic metric Lie algebras.
\end{lemma}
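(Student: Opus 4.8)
The plan is to apply the Favre--Santharoubane isomorphism criterion, \Cref{isotheo}, which reduces the claim to producing $\lambda\in\rr$, an element $X\in\hh$, and a metric Lie algebra isomorphism $f:\hh\to\hh$ of $\hh=\osc_\Phi(t,s)$ satisfying $f\hat\delta f^{-1}=\lambda\delta+\ad_X$. Since $\delta=(\Psi,0)$ and $\hat\delta=(\Psi,u)$ share the same $\Psi$-part and differ only in the vector $u$, I expect the simplest choice $f=\mathrm{Id}$ and $\lambda=1$ to suffice, so that the whole problem collapses to realising the difference $\hat\delta-\delta$ as an inner derivation $\ad_X$ of $\osc_\Phi(t,s)$.

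First I would write both derivations in the matrix form of \Cref{oscderlemma} and subtract. Using $\delta=(\Psi,0)$ and $\hat\delta=(\Psi,u)$, the difference is
\[\hat\delta-\delta=\begin{pmatrix}0&\langle u,.\rangle&0\\0&0&-u\\0&0&0\end{pmatrix}.\]
Next, I would compare this with the adjoint representation of $\osc_\Phi(t,s)$ recorded in \Cref{oscsubsec}: for $X=(\rho,Y,r)$,
\[\ad_{(\rho,Y,r)}=\begin{pmatrix}0&\langle\Phi(Y),.\rangle&0\\0&r\Phi&-\Phi(Y)\\0&0&0\end{pmatrix}.\]
Matching entries forces $r\Phi=0$ together with $\Phi(Y)=u$. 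Here the invertibility of $\Phi$, which is part of the standing assumption in the definition of the oscillator algebra, is exactly what is needed: it gives $r=0$ and the unique solution $Y=\Phi^{-1}(u)$, while $\rho$ does not enter the matrix and may be taken to be $0$. Thus with $X=(0,\Phi^{-1}(u),0)$ we obtain $\ad_X=\hat\delta-\delta$, that is $\hat\delta=\delta+\ad_X$, which is precisely the condition of \Cref{isotheo} for $f=\mathrm{Id}$ and $\lambda=1$.

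The only genuine obstacle is the solvability of $\Phi(Y)=u$ for $Y\in\rr^{t,s}$, and this is immediate from the invertibility of $\Phi$; were $\Phi$ merely skew and possibly singular, one would instead need $u$ to lie in the image of $\Phi$, but the oscillator setting rules this out by construction. Consequently \Cref{isotheo} produces an isomorphism of metric Lie algebras between the two double extensions, proving the lemma.
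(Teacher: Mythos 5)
Your proof is correct and follows exactly the paper's approach: apply \Cref{isotheo} with $f=\mathrm{Id}$ and $\lambda=1$, realising $\hat\delta-\delta$ as an inner derivation of $\osc_\Phi(t,s)$. One point worth flagging: the paper's own proof takes $X=(0,\Psi^{-1}(u),0)^\top$, whereas your matrix comparison correctly forces $\Phi(Y)=u$ and hence $X=(0,\Phi^{-1}(u),0)$; your choice is the right one (and the only one that makes sense in general, since $\Psi$ is not assumed invertible while $\Phi$ is, by the standing assumption in the definition of the oscillator algebra), so the paper's formula appears to be a typo.
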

\bprf
This follows from \Cref{isotheo} with $f=\mathrm{Id}$, $\lambda=1$ and $X=(0,\Psi^{-1}(u),0)^\top\in \hh$.
\eprf

Hence, without loss of generality, we can assume that $\delta=(\Psi,0)$.
Let $\gg_{\Psi,\Phi}$ be the  metric Lie algebra which 
is a $1$-dimensional double extension of an oscillator algebra $\hh=\osc_\Phi(t,s)$ by the derivation $\delta=(\Psi,0)$, that is 
\[\gg_{\Psi,\Phi}= \left(\rr^*\+_{(\Psi,0)}\osc_\Phi(t,s)\right)\rtimes_{(\Psi,0)}\rr.\] The ad-invariant scalar product $\la.,.\ra $ is of signature $(t+2,s+2)$ and $\gg_{\Psi,\Phi}$ is of dimension $n=m+2=\ell+4=t+s+4>4$. 
Since $\delta(\e_0)=0$, the centre of $\gg_{\Psi,\Phi}$ is spanned by $\e_-$ as defined in the previous section and by $\e_0=(1,0,0)\in \osc_{\Phi}(t,s)$. They correspond to two parallel vector fields on $\gg_{\Psi,\Phi}$.
Moreover \Cref{oscderlemma}, or the fact that double extensions of oscillator algebras by $\rr$  are solvable, imply:
\begin{lemma}
The metric Lie algebras  $\gg_{\Psi,\Phi}$  have $2$-step nilpotent Ricci tensor and hence are Bach flat.
\end{lemma}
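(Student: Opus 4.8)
The plan is to deduce everything from the solvability of $\gg_{\Psi,\Phi}$ together with \Cref{solvcor}, and then to cross-check via the explicit Ricci formula. First I would record the two inputs already available: the oscillator algebra $\hh=\osc_\Phi(t,s)$ is solvable by the lemma established above, and $\ss=\rr$ is abelian and hence solvable. Since $\gg_{\Psi,\Phi}$ is by construction the double extension $(\rr^*\+_{(\Psi,0)}\hh)\rtimes_{(\Psi,0)}\rr$, part~(2) of the earlier proposition on solvability of double extensions (a double extension of solvable algebras by a solvable algebra is solvable) applies verbatim and gives that $\gg_{\Psi,\Phi}$ is solvable. \Cref{solvcor} then immediately yields that $\gg_{\Psi,\Phi}$ has $2$-step nilpotent Ricci tensor, vanishing scalar curvature, and is Bach flat, which is exactly the assertion.

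For a self-contained verification that does not route through \Cref{solvcor}, I would instead compute $\Ric^2$ directly from the block form \cref{ric1dim} for double extensions by $\rr$ (and the expression \cref{ric2} for the square). Here the relevant Killing form is that of the inner oscillator algebra, $K_\hh=\tr(\Phi^2)\,(\e_{\ell+1}^*)^2$, and the $1$-form attached to the derivation $\delta=(\Psi,0)$ is $\eta=\tr(\Psi\circ\Phi)\,\e_{\ell+1}^*$ by \Cref{oscderlemma}. The structural point is that both $K_\hh$ and $\eta$ are supported on the single covector $\e_{\ell+1}^*$, while raising an index with the oscillator metric sends $\e_{\ell+1}$ to a multiple of the null central vector $\e_0$. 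Consequently $K_\hh^\sharp$ maps $\e_{\ell+1}\mapsto \tr(\Phi^2)\,\e_0$ and annihilates everything else, so $(K_\hh^\sharp)^2=0$; likewise $\eta^\sharp=\tr(\Psi\circ\Phi)\,\e_0$ is null and is killed by $K_\hh^\sharp$. Substituting these into the block form of $\Ric^2$ makes every entry vanish, giving $\Ric^2=0$, and Bach flatness then follows from part~(2) of \Cref{theodextric}.

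Since the result is a direct consequence of machinery already in place, I do not anticipate a genuine obstacle. The only point requiring care is bookkeeping across the two nested double extensions: the latin indices in \cref{ric1dim} range over a basis of $\hh=\osc_\Phi(t,s)$, which is itself $(\ell+2)$-dimensional, so one must not conflate the inner generators $\e_0,\e_{\ell+1}$ of the oscillator algebra with the outer generators $\e_-,\e_+$ adjoined in forming $\gg_{\Psi,\Phi}$. Keeping this distinction straight is precisely what guarantees that the contractions of $K_\hh$ with $\eta$ and the square $(K_\hh^\sharp)^2$ all land in the null line $\rr\,\e_0$ and vanish.
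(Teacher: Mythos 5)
Your proposal is correct and matches the paper's own (very brief) justification, which likewise invokes either the solvability of the double extension together with \Cref{solvcor}, or the explicit form of the derivation and of $\eta$ from \Cref{oscderlemma}. Both of your routes are sound, and the computational cross-check correctly identifies that $K_\hh$ and $\eta$ are supported on $\e_{\ell+1}^*$ so that all contractions land in the null line $\rr\,\e_0$.
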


Analysing the second obstruction by using \Cref{obsextprop} gives the following result:
\begin{theorem}\label{oscexttheo}
Let $\gg_{\Psi,\Phi}$ be the metric Lie algebra that is obtained from an oscillator algebra $\mathfrak{osc}_\Phi(t,s)$ by $1$-dimensional double extension by the derivation $(\Psi,v)$. If  $\gg_{\Psi,\Phi}$ is conformally Einstein, then  either $\tr( \Psi\circ \Phi)=0$ and $\tr(\Phi^2)=0$, or there is a nonzero vector in $W\in\rr^{t,s}$ such that 
\[
\Psi^2 (W) =\tfrac{1}{\ell+2} \tr(\Psi^2) W,\quad
\Phi^2 (W) =\tfrac{1}{\ell+2} \tr(\Phi^2) W,\quad
\Psi\circ\Phi  (W) = \tfrac{1}{\ell+2} \tr(\Psi\circ \Phi),
\] for $\ell=t+s$. Moreover, let $B$ be the
 trace form  of $\so(t,s)$, which is non-degenerate. 
Then the matrices $\Phi$ and $\Psi $ are either a multiple of each other or  span a plane in $\so(t,s)$ that is degenerate with respect to  $B$.
\end{theorem}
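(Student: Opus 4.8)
The plan is to apply \Cref{obsextprop} to the inner Lie algebra $\hh=\osc_\Phi(t,s)$, so that every quantity appearing there becomes explicit. After the isomorphism reduction of the preceding lemma I may take the derivation to be $\delta=(\Psi,0)$; then on $\hh$ the square $\delta^2$ acts as $\Psi^2$ on the $\rr^{t,s}$-block and kills both the inner centre and the inner $\rr$-line, while $K_\hh=\tr(\Phi^2)(f_{\ell+1}^*)^2$, $\eta=\tr(\Psi\circ\Phi)\,f_{\ell+1}^*$ and $\eta^\sharp=\tr(\Psi\circ\Phi)\,f_0$, where $f_0,\dots,f_{\ell+1}$ is the adapted basis of $\osc_\Phi(t,s)$. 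With these substitutions condition (a) of \Cref{obsextprop}, namely $\Ric^2=0$, is automatic — this is exactly the earlier observation that $\gg_{\Psi,\Phi}$ has $2$-step nilpotent Ricci tensor and is Bach flat — so all the content lies in condition (b).

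Next I would unravel (b). Since $n=\ell+4>4$ always holds, the only ways to avoid producing a nullity vector are that $\gg_{\Psi,\Phi}$ be Ricci-flat, or that $K_\hh=0$ and $\eta=0$; both force $\tr(\Phi^2)=0=\tr(\Psi\circ\Phi)$, which is the first alternative. Otherwise \Cref{obsextprop}(b) yields a nonzero $V=V^-\e_-+\hat V$ with $\hat V=a\,f_0+W+b\,f_{\ell+1}$, $W\in\rr^{t,s}$, solving \cref{eq0,eq1,eq2,eq3}. The core step is to expand these four identities block-by-block using the explicit $\ad_{\hat V}$ and bracket of the oscillator algebra. The $\rr^{t,s}$-blocks of \cref{eq1,eq2} together with the $\rr^*$-block of \cref{eq3} — the latter rewritten via the skew-symmetry identity $\langle\Phi W,\Phi X\rangle=-\langle\Phi^2W,X\rangle$ — deliver precisely the three eigenvalue relations of the statement, while the centre- and $\rr$-blocks produce the scalar constraints $\tr(\Psi^2)b=\tr(\Psi\circ\Phi)b=0$, $b\Phi^2=0$, $\tr(\Psi^2)a=V^-\tr(\Psi\circ\Phi)$ and $V^-\tr(\Phi^2)=a\tr(\Psi\circ\Phi)$. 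As $\Phi$ is invertible, $b\Phi^2=0$ forces $b=0$, and I would then split on whether $W\neq0$, the genuine second alternative, or $W=0$, in which case the two scalar equations in $(a,V^-)$ admit a nonzero solution only when $\tr(\Phi^2)\tr(\Psi^2)=\tr(\Psi\circ\Phi)^2$ already holds, or else collapse to the first alternative.

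For the final dichotomy I would exploit the one piece of structure not yet used: by \Cref{oscderlemma} the derivation condition forces $[\Phi,\Psi]=0$. Writing $\alpha,\beta,\gamma$ for the eigenvalues of $\Psi^2$, $\Phi^2$ and $\Psi\Phi$ on $W$, commutativity applied to $W$ gives $\Phi\Psi W=\Psi\Phi W=\gamma W$, and applying $\Phi\Psi=\Psi\Phi$ to $\Phi W$ and to $\Psi W$ yields the two vector identities $\gamma\,\Phi W=\beta\,\Psi W$ and $\alpha\,\Phi W=\gamma\,\Psi W$. Eliminating $\Phi W$ and $\Psi W$ (multiply the first by $\alpha$, the second by $\gamma$) gives $(\alpha\beta-\gamma^2)\Psi W=0$, and a one-line check of the case $\Psi W=0$ shows $\alpha\beta=\gamma^2$ in all cases. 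Translating back, $\tr(\Phi^2)\tr(\Psi^2)=\tr(\Psi\circ\Phi)^2$, which is exactly the vanishing of the Gram determinant of the trace form $B$ on $\mathrm{span}\{\Phi,\Psi\}$; hence $\Phi$ and $\Psi$ are either linearly dependent or span a $B$-degenerate plane. I expect the main obstacle to be the mechanical but error-prone componentwise expansion in the middle step — keeping the inner-centre, $\rr$ and $\rr^{t,s}$ blocks and the sign conventions straight, and correctly folding the degenerate subcase $W=0$ into the stated alternatives — whereas the commutativity argument for the ``Moreover'' is short once $[\Phi,\Psi]=0$ is invoked.
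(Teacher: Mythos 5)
Your proposal is correct and follows essentially the same route as the paper: apply \Cref{obsextprop} to $\hh=\osc_\Phi(t,s)$ with the derivation reduced to $(\Psi,0)$, expand \cref{eq0,eq1,eq2,eq3} block-by-block to obtain the three eigenvalue relations on $W$ together with the $2\times 2$ scalar system in $(V^-,V^0)$, and use $[\Phi,\Psi]=0$ to deduce the vanishing of the Gram determinant $\tr(\Phi^2)\tr(\Psi^2)-\tr(\Psi\circ\Phi)^2$. The only (harmless) deviations are that you obtain $V^{\ell+1}=0$ from $b\,\Phi^2=0$ and the invertibility of $\Phi$ rather than from the trace conditions, and that you explicitly treat the degenerate case $W=0$, which the paper glosses over.
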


\bprf We assume that at least one of $\tr(\Phi^2)=0$ or $\tr(\Psi\circ \Phi)$ is not zero, so one of $K_\hh$ or $\eta$ is not zero for $\hh=\osc_\Phi(t,s)$.
Then, by the virtue of \Cref{obsextprop},  conformally Einstein implies  the  equations (\ref{eq0}--\ref{eq3}) for 
 $\hat V= \sum_{i=0}^{\ell+1}V^i\e_i$.
 Since $\eta=\tr(\Psi \circ \Phi)\e_{\ell+1}^*$,  equation \ref{eq3} evaluated for the pair  $\e_0$ and $\e_{\ell+1}$  and \ref{eq0} imply that
\[ V^{\ell+1}\tr(\Phi^2)=V^{\ell+1}\tr(\Psi \circ \Phi) =0,\]
and hence $V^{\ell+1}=0$. Then $\hat V=V^0\e_0+W$ with $W=\sum_{i=1}^\ell V^i\e_i \in \rr^{t,s}\subset \osc_\Phi(t,s)$. 
Equation (\ref{eq1}) gives two more equations:
\begin{eqnarray}
\label{eq1a}
\Psi^2 (W) -\tfrac{1}{\ell+2} \tr(\Psi^2) W&=&0,
\\
\label{eq1b}
V^0\tr(\Psi^2)-V^-\tr(\Psi\circ \Phi)&=&0.
\end{eqnarray} 
 In the same way equation (\ref{eq2}) yields  equations,
 \begin{eqnarray}
\label{eq2a}
\Psi\circ\Phi  (W) - \tfrac{1}{\ell+2} \tr(\Psi\circ \Phi) W&=&0,
\\
\label{eq2b}
V^0\tr(\Psi\circ \Phi)-V^-\tr( \Phi^2)
&=&0.
\end{eqnarray}
Finally, for the oscillator algebras equation (\ref{eq3}) reduces to
\[\ad_W\circ [.,.] -\tfrac{2}{\ell+2}\tr(\Phi^2) W^\flat\wedge \e_{\ell+1}^*\otimes \e_0=0.\]
The only pair of arguments for which this equation gives a condition is $\e_{\ell+1}$ and $X\in \rr^{t,s}$, for which we obtain
\begin{eqnarray*}
0
&=&
[W,\Phi X]+ \tfrac{1}{\ell+2}\tr(\Phi^2)\langle W,X\rangle \e_0\\
& =&
\left( \langle \Phi W,\Phi X\rangle  + \tfrac{1}{\ell+2}\tr(\Phi^2)
\langle W,X\rangle \right) \e_0\\
&
=&
\left(- \langle \Phi^2 W,X \rangle + \tfrac{1}{\ell+2}\tr(\Phi^2)
\langle W,X\rangle \right) \e_0,
\end{eqnarray*}
for all $X\in \rr^{t,s}$. This yields 
the equation
 \begin{eqnarray}
\label{eq3a}
\Phi^2 (W) -\tfrac{1}{\ell+2} \tr(\Phi^2) W&=&0.
\end{eqnarray}
Equations 
 (\ref{eq1b}) and (\ref{eq2b}) can be concisely written as
 \[
\underbrace{ \begin{pmatrix}
 \tr(\Phi^2)&- \tr(\Psi\circ\Phi) 
 \\
- \tr(\Psi\circ\Phi) & \tr(\Psi^2)
 \end{pmatrix}}_{=:B(\Psi,\Phi)}
 \begin{pmatrix}
 V^-
 \\
 V^0
 \end{pmatrix}
 =
 0.
\]
This system has a non-trivial solutions space if and only if 
\[0=\det (B(\Psi,\Phi))=  \tr(\Phi^2) \tr(\Psi^2) - (\tr(\Psi\circ\Phi))^2.\]
Now observe that $[\Psi,\Phi]=0$ implies that 
$(\Psi\circ \Phi)^2 W = \Psi^2\Phi^2 W$, so that 
equations (\ref{eq1a}), (\ref{eq2a}) and (\ref{eq3a}) ensure {\em in a remarkable way} that $\det (B(\Psi,\Phi))=0$, so that equations  (\ref{eq1b}) and (\ref{eq2b}) have indeed a non-trivial solution $(V^-,V^0)$.
\eprf

Now recall that the trace form $B$ of $\so(\ell)$ is negative definite. Moreover,  the indecomposability of $\gg_{\Psi,\Phi}$ implies that $\Psi$ and $\Phi$ are not multiples of each other (see \cite[Proposition 7.1]{baum-kath03} and \cite{kath-olbrich04}). Therefore,  \Cref{oscexttheo} yields the following conclusion:
\begin{corollary}\label{oscextcor}
If the metric Lie algebra $\gg_{\Psi,\Phi}$ is of signature $(2,\ell+2)$ and conformally Einstein, then $\Psi $ is a multiple of $\Phi$. In particular, if $\gg_{\Psi,\Phi}$ is indecomposable, it is not conformally Einstein.
\end{corollary}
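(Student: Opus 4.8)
The plan is to read the statement off as a short linear-algebra consequence of \Cref{oscexttheo}, the only substantive input being the definiteness of the trace form $B$ in this signature. First I would translate the hypothesis on the signature: since $\gg_{\Psi,\Phi}$ has signature $(t+2,s+2)$, requiring it to be $(2,\ell+2)$ with $\ell=t+s$ forces $t=0$, so that $\Phi$ and $\Psi$ lie in $\so(0,\ell)\cong\so(\ell)$. On the compact form $\so(\ell)$ the trace form is negative definite, because for a skew-symmetric $X$ one has $\tr(X^2)=-\tr(X^\top X)\le 0$ with equality only for $X=0$; this is exactly the fact recalled just before the statement.

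Next I would feed the conformally Einstein hypothesis into \Cref{oscexttheo}. Its conclusion is that $\Phi$ and $\Psi$ are either proportional or span a plane in $\so(\ell)$ that is degenerate with respect to $B$; equivalently, that the Gram determinant $\det(B(\Psi,\Phi))=\tr(\Phi^2)\tr(\Psi^2)-(\tr(\Psi\circ\Phi))^2$ vanishes. The key point is that, because $B$ is definite, its restriction to any genuine two-dimensional subspace is again definite and hence non-degenerate; so a $B$-degenerate ``plane'' cannot actually be two-dimensional, and the vanishing of the Gram determinant forces $\Phi$ and $\Psi$ to be linearly dependent. Since $\Phi$ is invertible, hence nonzero, this means $\Psi$ is a multiple of $\Phi$, which is the first assertion. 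I would also note that the same definiteness disposes of the exceptional alternative $\tr(\Phi^2)=\tr(\Psi\circ\Phi)=0$ in \Cref{oscexttheo}: $\tr(\Phi^2)=B(\Phi,\Phi)=0$ would force $\Phi=0$, again contradicting invertibility, so that case never arises.

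For the ``in particular'' clause I would combine this with the cited structural fact (\cite[Proposition 7.1]{baum-kath03}, \cite{kath-olbrich04}) that indecomposability of $\gg_{\Psi,\Phi}$ requires $\Phi$ and $\Psi$ to be linearly independent: this directly contradicts $\Psi$ being a multiple of $\Phi$, so an indecomposable $\gg_{\Psi,\Phi}$ of signature $(2,\ell+2)$ cannot be conformally Einstein. I do not expect a genuine obstacle here, since all of the analytic work has already been carried out in \Cref{oscexttheo}; the only steps requiring care are the passage from the signature condition to definiteness of $B$ and the explicit use of the invertibility of $\Phi$ to eliminate both the degenerate-plane alternative and the exceptional trace-free case.
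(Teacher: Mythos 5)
Your proposal is correct and follows essentially the same route as the paper: the signature condition forces $\Phi,\Psi\in\so(\ell)$ where the trace form $B$ is negative definite, which rules out both the degenerate-plane alternative and the exceptional trace-free case of \Cref{oscexttheo}, leaving $\Psi$ proportional to $\Phi$; combined with the cited indecomposability criterion this gives the result. You are somewhat more explicit than the paper (which compresses the argument into the two sentences preceding the corollary), in particular in noting that invertibility of $\Phi$ eliminates the case $\tr(\Phi^2)=\tr(\Psi\circ\Phi)=0$, but the underlying argument is identical.
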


\subsection{The remaining case in signature $(2,n-2)$}
In \cite{baum-kath03} it was shown that every indecomposable, nonsimple metric Lie algebra $\gg$ in signature $(2,n-2)$ is isomorphic to one of the following cases: 
\begin{enumerate}
\item $\gg=\osc_\Phi(1,n-3)$, which is conformally Einstein by our \Cref{osctheo};
\item to a double extension of an oscillator algebra, i.e.,  \[\gg=\gg_{\Psi,\Phi} =(\rr^*\+_\Psi  \osc_\Phi(n-4))\rtimes_\Psi\rr;\] we have seen in the previous section that they are   not conformally Einstein whenever they are indecomposable;
\item or to a double extension of the direct sum of $\osc_\Phi(n-5)$ with $\rr$, i.e., 
\[
\gg=(\rr^*\+_{\delta } \hh) \rtimes_\delta \rr\quad\text{ with } \hh = \osc_\Phi(n-5)\+\rr, \]
with $\Phi\in \so(n-5)$ with $\Phi\not=0$.
\end{enumerate}
In the remainder we will show that the last case (3) is not conformally Einstein.  Since $\hh= \osc_\Phi(n-5) \+\rr $ is solvable, its double extension is solvable and hence Bach flat, so we will focus on the second criterion, the non trivial Weyl nullity ideal, to show that $\gg$ is not conformally Einstein. These are the conditions in (b) of \Cref{obsextprop}.
For this we fix a basis $\e_0,\ldots , \e_{\ell+2}$ of $\hh$, where $\e_0,\ldots , \e_{\ell+1}$ is a basis of the oscillator algebra (as in the previous section), $\ell = n-5$, and $\e_{\ell+2}$ spans the central $\rr$ direction. 
From \Cref{obsextprop} we get  the existence of a $\hat V=V^0\e_0 + W+ V^{\ell+1}\e_{\ell+1}+V^{\ell+2}\e_{\ell+2}$ with $W\in \rr^{n-5}$,  that satisfies equations (\ref{eq0}--\ref{eq3}).

The  Killing form of $\hh$ is again given by $K_\hh=tr(\Phi^2)\, (\e_{\ell+1}^*)^2$. 
Since $\e_0$ and $\e_{\ell+2}$ are in the centre of $\hh$ and since $\e_0^\flat= \e_{\ell+1}^*$, $\e_{\ell+1}^\flat=\e_0^*$ and  $\e_{\ell+2}^\flat=-\e_{\ell+2}^*$, equation (\ref{eq3}) applied to the pairs $(\e_0, \e_{\ell+1})$ and $(\e_{\ell+2}, \e_{\ell+1})$ gives that 
\[ \tr(\Phi^2)\, V^{\ell+1}=\tr(\Phi^2)\, V^{\ell+2}=0.\] 
Since  $\Phi\in \so(n-5)$, the vanishing of  $\tr(\Phi^2)$ would imply $\Phi=0$ which is excluded in this case. Hence, as for the oscillator algebra we have that $\hat V=V^0\e_0+W$ with $W\in \rr^{n-5}$.

Next we have to determine the derivations of a Lie algebra of the form 
\[\hh = \rr\+\osc_\Phi(n-5).\]
As for the oscillator algebras, one can show \cite{kelli-thesis} that the derivations of $\hh$ are of the form
\[\delta=
\begin{pmatrix}
0& \langle u,.\rangle &0&-c
\\
0&\Psi & -u&0
\\
0&0&0&0
\\
0&0&c&0
\end{pmatrix},\]
with $\Psi\in \so(t,s)$, $c\in \rr$,   $u\in \rr^{t,s}$ and $\left[\Phi,\Psi\right]=0$. 
Moreover, again using \Cref{isotheo}, one can show that every double extension of $\hh$ by $\rr$ and $\delta$ with  $c\not=0$ is isomorphic to a double extension by 
\[\delta=
\begin{pmatrix}
0& 0 &0&-1
\\
0&\Psi & 0&0
\\
0&0&0&0
\\
0&0&1&0
\end{pmatrix},\]
and hence we can assume this  without loss of generality. As $\gg$ is indecomposable this implies that $\Phi$ and $\Phi$ are not a multiple of each other. The proof for these statements can be found in \cite{kath-olbrich04, kath-olbrich06} or \cite{kelli-thesis}, see also \cite[Theorem 7.1]{baum-kath03}. 

From now on the proof that $\gg$ is not conformally Einstein proceeds with the derivation of equations  (\ref{eq1}--\ref{eq3a}) completely analogous to the proof of \Cref{oscexttheo} and \Cref{oscextcor}, for details see \cite{kelli-thesis}. This yields the following conclusion, which gives a proof of \Cref{2ntheo}:
\begin{theorem}
Let $\gg=(\rr^*\+_{\delta } \hh) \rtimes_\delta \rr$ be an indecomposable  metric Lie algebra that is given
by a double extension of $ \hh = \osc_\Phi(n-5)\+\rr$ by a derivation $\delta$. Then $\gg$ is not conformally Einstein.
\end{theorem}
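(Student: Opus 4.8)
The plan is to use that $\hh=\osc_\Phi(n-5)\+\rr$ is solvable, so $\gg$ is solvable and hence Bach flat by \Cref{solvcor}; thus the first condition~(a) of \Cref{obsextprop}, namely $\Ric^2=0$, holds automatically, and everything reduces to showing that the Weyl nullity ideal is trivial. Concretely, I would assume $\gg$ is conformally Einstein but not Einstein, invoke the conditions in~(b) of \Cref{obsextprop}, and derive a contradiction from the existence of a nonzero $\hat V=V^0\e_0+W+V^{\ell+1}\e_{\ell+1}+V^{\ell+2}\e_{\ell+2}$, with $W\in\rr^{n-5}$, solving the system (\ref{eq0}--\ref{eq3}). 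Here $\ell=n-5$ and $m=\dim\hh=n-2$.

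First I would pin down the derivation. Computing $\der(\hh)\cap\so(\hh)$ for $\hh=\osc_\Phi(n-5)\+\rr$ (as in \cite{kelli-thesis}) and applying \Cref{isotheo} normalises $\delta$ to the stated matrix with $c=1$ and $u=0$, so that $\delta$ acts as $\Psi$ on $\rr^{n-5}$ and by $\e_{\ell+1}\mapsto\e_{\ell+2}\mapsto-\e_0$, $\e_0\mapsto0$ on the remaining directions, with $[\Phi,\Psi]=0$. As recorded above, $K_\hh=\tr(\Phi^2)(\e_{\ell+1}^*)^2$, one finds $\eta=\tr(\Psi\circ\Phi)\,\e_{\ell+1}^*$ just as for the oscillator algebra, and $\tr(\delta^2)=\tr(\Psi^2)$. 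The first use of the system is equation (\ref{eq3}): evaluated on the central pairs $(\e_0,\e_{\ell+1})$ and $(\e_{\ell+2},\e_{\ell+1})$, and using $\e_0^\flat=\e_{\ell+1}^*$, $\e_{\ell+2}^\flat=-\e_{\ell+2}^*$, it forces $\tr(\Phi^2)\,V^{\ell+1}=\tr(\Phi^2)\,V^{\ell+2}=0$. Since $\Phi\neq0$ lies in the compact $\so(n-5)$, its trace form value satisfies $\tr(\Phi^2)<0$, whence $V^{\ell+1}=V^{\ell+2}=0$ and $\hat V=V^0\e_0+W$.

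With $\hat V$ reduced this way, the remainder proceeds exactly as in the proof of \Cref{oscexttheo}. Extracting the $\rr^{n-5}$-components of (\ref{eq1}), (\ref{eq2}) and (\ref{eq3}) yields the three eigenvector equations $\Psi^2W=\tfrac{1}{m}\tr(\Psi^2)\,W$, $\Psi\circ\Phi\,(W)=\tfrac{1}{m}\tr(\Psi\circ\Phi)\,W$ and $\Phi^2W=\tfrac{1}{m}\tr(\Phi^2)\,W$, while the $\e_0$-components give the linear system $B(\Psi,\Phi)\,(V^-,V^0)^\top=0$ with $B(\Psi,\Phi)$ the $2\times2$ matrix of \Cref{oscexttheo}. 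Exploiting $[\Phi,\Psi]=0$, so that $(\Psi\circ\Phi)^2W=\Psi^2\Phi^2W$, the three eigenvector equations force $\det B(\Psi,\Phi)=\tr(\Phi^2)\tr(\Psi^2)-(\tr(\Psi\circ\Phi))^2=0$; that is, $\Phi$ and $\Psi$ span a plane in $\so(n-5)$ that is degenerate for the trace form.

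The contradiction is then the one already used in \Cref{oscextcor}: the trace form of the compact $\so(n-5)$ is negative definite, so it admits no degenerate plane, and $\det B(\Psi,\Phi)=0$ can hold only if $\Phi$ and $\Psi$ are linearly dependent. But indecomposability of $\gg$ forces $\Phi$ and $\Psi$ to be linearly independent (\cite[Proposition~7.1]{baum-kath03}, \cite{kath-olbrich04}), which is the desired contradiction and shows that $\gg$ is not conformally Einstein. The step I expect to be the main obstacle is the bookkeeping in the second and third paragraphs: one must verify that the extra central $\rr$-direction, which $\delta$ couples to the $\e_0,\e_{\ell+1}$ plane via $\e_{\ell+1}\mapsto\e_{\ell+2}\mapsto-\e_0$, contributes no surviving unknowns after (\ref{eq3}) and no new terms in the three eigenvector equations, so that the clean determinant criterion of \Cref{oscexttheo} is genuinely recovered rather than modified.
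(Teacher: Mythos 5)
Your proposal is correct and follows essentially the same route as the paper: reduce to the second obstruction via solvability and \Cref{obsextprop}, normalise $\delta$ with \Cref{isotheo}, kill $V^{\ell+1}$ and $V^{\ell+2}$ using \cref{eq3} on the central directions, and then run the determinant argument of \Cref{oscexttheo} and \Cref{oscextcor} against the negative definiteness of the trace form on $\so(n-5)$ and the indecomposability constraint that $\Psi$ and $\Phi$ be linearly independent. The paper only sketches these final steps (deferring to \cite{kelli-thesis}), and your bookkeeping fills them in consistently.
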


%
\providecommand{\MR}[1]{}\def\cprime{$'$} \def\cprime{$'$} \def\cprime{$'$}

  \end{document}